\documentclass[11pt, a4paper]{amsart}

\usepackage{amsmath}
\usepackage{tikz}
\usepackage{amssymb}

\usepackage{amsthm}
\usepackage{tikz-cd}
\usepackage[all]{xy}
\usepackage{amsfonts}
\usepackage{mathrsfs}
\usepackage[scr=boondox, cal=esstix]{mathalpha}
\usepackage{hyperref}
\usepackage{xcolor}
\usepackage{MnSymbol}

\usepackage{mathtools}
\usepackage{subcaption}

\usepackage{comment}

\textwidth 6in
\oddsidemargin .25in
\evensidemargin .25in
\parskip .02in

\input{header_labeling}


\newcommand{\DD}{\mathbb{D}}

\newcommand{\NN}{\mathbb{N}}

\newcommand{\RR}{\mathbb{R}}

\newcommand{\ZZ}{\mathbb{Z}}

\newcommand{\cA}{\mathcal{A}}

\newcommand{\cD}{\mathcal{D}}

\newcommand{\cF}{\mathcal{F}}
\newcommand{\cG}{\mathcal{G}}
\newcommand{\cH}{\mathcal{H}}

\newcommand{\cK}{\mathcal{K}}
\newcommand{\cL}{\mathcal{L}}
\newcommand{\cM}{\mathcal{M}}
\newcommand{\cN}{\mathcal{N}}
\newcommand{\cO}{\mathcal{O}}

\newcommand{\cT}{\mathcal{T}}



\newcommand{\fB}{\mathsf{B}}
\newcommand{\fC}{\mathsf{C}}

\newcommand{\fF}{\mathsf{F}}

\newcommand{\fH}{\mathsf{H}}

\newcommand{\fK}{\mathsf{K}}

\newcommand{\fM}{\mathsf{M}}
\newcommand{\fN}{\mathsf{N}}

\newcommand{\fR}{\mathsf{R}}
\newcommand{\fS}{\mathsf{S}}

\newcommand{\fU}{\mathsf{U}}
\newcommand{\fV}{\mathsf{V}}
\newcommand{\fW}{\mathsf{W}}
\newcommand{\fX}{\mathsf{X}}

\newcommand{\cc}{\textbf{c}}
\newcommand{\dd}{\textbf{d}}
\newcommand{\kk}{\textbf{k}}
\newcommand{\nn}{\textbf{n}}

\newcommand{\RP}{\operatorname{\RR P}}

\newcommand{\Homeo}{\operatorname{Homeo}}

\newcommand{\Diff}{\operatorname{Diff}}

\newcommand{\Mod}{\operatorname{Mod}}
\newcommand{\PMod}{\operatorname{PMod}}



\newcommand{\closure}[1]{\overline{#1}}
\newcommand{\interior}{\operatorname{Int}}

\newcommand{\inj}{\operatorname{inj}}
\newcommand{\Bl}{\mathscr{Bl}}
\newcommand{\PH}{\operatorname{P\cH}}

\newcommand{\HHH}{\mathrm{H}}


\title[Quasimorphisms on $\Diff_{\omega}(M,\partial M)_0$]{\small{Quasimorphisms on the group of density preserving diffeomorphisms of the M\"{o}bius band}}
\author{KyeongRo Kim}
\address{\hskip-\parindent
Research institute of Mathematics\\
Seoul National University\\
GwanAk-Ro 1, GwanAk-Gu, Seoul 08826, Korea}
\email{kyeongrokim14@gmail.com}

\author{Shuhei Maruyama}
\address{\hskip-\parindent
School of Mathematics and Physics, College of Science and Engineering\\
Kanazawa University\\
Kakuma-machi, Kanazawa, Ishikawa, 920-1192, Japan}
\email{smaruyama@se.kanazawa-u.ac.jp}

\date{\today}

\begin{document}
\subjclass{Primary: 57S05, 20F65, 57K20; Secondary: }
\keywords{Surface braid groups, Mapping class groups, Gambaudo-Ghys cocycles, Density preserving diffeomorphisms, Quasi-morphisms}
\maketitle
\begin{abstract}
    The existence of quasimorphisms on groups of homeomorphisms of manifolds has been extensively studied under various regularity conditions, such as smooth, volume-preserving, and symplectic. However, in this context, nothing is known about groups of `area'-preserving diffeomorphisms on non-orientable manifolds.

    In this paper, we initiate the study of groups of density-preserving diffeomorphisms on non-orientable manifolds. 
    Here, the density is a natural concept that generalizes volume without concerning orientability.   
    We show that the group of density-preserving diffeomorphisms on the M\"obius band admits countably many homogeneous quasimorphisms which are linearly independent.
    Along the proof, we show that groups of density preserving diffeomorphisms on compact, connected, non-orientable surfaces with non-empty boundary are weakly contractible.  
\end{abstract}

\section{Introduction}

Let $\Diff(S)_0$ be the identity component of the group of smooth diffeomorphisms of a surface $S$.
Bowden, Hensel and Webb \cite{BowdenHenselWebb22} introduced the fine curve graph of closed orientable surfaces, and proved its Gromov-hyperbolicity.
Furthermore, they used it and the theorem of Bestvina--Fujiwara \cite{BestvinaFujiwara02} to prove that $\Diff(S)_0$ admits infinitely many linearly independent homogeneous quasimorphisms if $S$ is a closed surface of genus greater than $0$.
After that, Kimura and Kuno \cite{KimuraKuno21} proved the similar statement for closed non-orientable surfaces of genus greater than $2$.

About the surfaces with boundary, let $\Homeo(S, \partial S)_0$ be the identity component of the group of homeomorphisms of $S$ which are identity on the boundary $\partial S$.
Bowden, Hensel and Webb \cite{BowdenHenselWebb24} proved that the group $\Homeo(S, \partial S)_0$ admits a homogeneous quasimorphism if the Euler characteristic $\chi(S)$ of $S$ is negative.
B\"{o}ke \cite{Boke24} used this to prove that $\Diff(N_2)_0$ admits a homogeneous quasimorphism, where $N_2$ is the Klein bottle.

As the above quasimorphisms have a geometric group theoretical and hyperbolic nature, the low genus surfaces do not show up.
In fact, it is known that $\Diff(S^2)_0$ is uniformly perfect \cite{BuragoIvanovPolterovich08, Tsuboi08}, and hence does not admit any homogeneous quasimorphisms.

Meanwhile, the situation of the group of area-preserving diffeomorphisms is a bit different.
Let $\omega$ be an area form of a closed orientable surface $S$ and $\Diff_{\omega}(S)_0$ (resp. $\Diff_{\omega}(S, \partial S)_0$) be the identity component of the group of area-preserving diffeomorphisms of $S$ (resp. which are identity on the boundary). 
By using the Floer and quantum homology, Entov and Polterovich \cite{EntovPolterovich03} constructed uncountably many homogeneous quasimorphisms on $\Diff_{\omega}(S^2)_0$, which are linearly independent.
Gambaudo and Ghys \cite{GambaudoGhys04} used a dynamical construction of braids and its signature to construct countably many homogeneous quasimorphisms on $\Diff_{\omega}(D,\partial D)_0$, which are linearly independent.
Here $D$ denotes the closed unit disk.
The construction of Gambaudo and Ghys has been studied and generalized for orientable surfaces with higher genus (\cite{Brandenbursky11}, \cite{Ishida14}, \cite{Kimura20}).

However, there is nothing known about `area'-preserving diffeomorphism groups of non-orientable surfaces.
This article is a first step in studying quasimorphisms on the groups of `area'-preserving diffeomorphisms of non-orientable surfaces.
We consider the construction of Gambaudo and Ghys on the M\"{o}bius band $M$, which is the non-orientable surface with boundary of lowest genus.

Since the usual area form is not well-defined on $M$, we instead use the standard density $\omega$ of $M$, which is a nowhere vanishing differential $2$-form of odd type, or twisted differential $2$-form (see \refsec{oddForm} for the definition).
Let $\Diff_{\omega}(M,\partial M)_0$ be the identity component of the group of density-preserving diffeomorphisms.
We prove the following.
\begin{restate}{Theorem}{Thm:inftyDim}
    The group $\Diff_{\omega}(M,\partial M)_0$ admits countably many  homogeneous quasimorphisms which are linearly independent.
\end{restate}

To follow the dynamical construction of braids of Gambaudo and Ghys, we need the weak contractibility of $\Diff_\omega(M,\partial M)_0$.
For future reference, we prove the weak contractibility on general compact surfaces.
The case of orientable surfaces has been shown by Tsuboi \cite{Tsuboi00}.
\begin{restate}{Theorem}{Thm:contractible}
Let $F$ be a compact, connected surface with non-empty boundary, possibly non-orientable. Then, $\Diff_\omega(F,\partial F)_0$ is weakly contractible.
\end{restate}
\begin{rmk}
    Here, $\omega$ is a density form on $F$. By a version of Moser's theorem, \cite{Bruveris}, the choice of a certain density form is not significant. In particular, if the underlying manifold is orientable, then any density form naturally corresponds to an area form. 
\end{rmk}

\subsection*{Origanization and Strategy}
In this paper, we deal with non-orientable manifolds. 
In non-orientable manifolds, there is no volume form in usual sense. 
Hence, we use a ``twisted" version of forms which is introduced by de Rham \cite{deRham84}, and  density forms instead of volume forms. 
In \refsec{basicNotion}, we review the theory of twisted de Rham cohomology. 
Also, we recall the basic notations and set conventions used throughout the paper.

In \refsec{exact}, we show the exactness of the density forms on non-orientable manifolds with non-empty boundary. 
This fact is well-known for orientable manifolds.  

In \refsec{fiberContractible}, we show that in \refthm{contractible}, the simply connectedness of $\Diff_{\omega}(F,\partial F)_0$. 
This is necessary to ensure the well-defineness of the Gambaudo-Ghys type cocycles. 

In \refsec{braidGroupOnM}, we recall basic notions about the mapping class groups and braid groups on the M\"obius bands.
Also, we discuss the algebraic and topological propeties of those groups.
In particular, we show that every pure braid group and braid group with at least two strands admit countably many homogeneous quasimorphisms which are linearly independent (\reflem{inftyDimBraid}).

The main goal of  \refsec{dimension} is to prove the main theorem, \refthm{inftyDim}.
To do this, we define a homomorphism $\cG:Q(B_2(M))\to Q(\Diff_\omega(M,\partial M)_0)$, following the construction of Gambaudo-Ghys where $Q(G)$ denotes the space of homogeneous quasimorphisms over $G$.
The main theorem is shown by the injectivity of $\cG$ (\refthm{injectivity}).

To show the injectivity of $\cG$, we follow Ishida's strategy \cite{Ishida14}(or more generally \cite{Brandenbursky15}). 
Hence, we construct a sequence of density-preserving representatives of a given Dehn twist while keeping conjugation-generated norms of the associated Gambaudo-Ghys type cocycle within some bounded error (\reflem{canonicalDehnTwist}).
Unlike in \cite{Ishida14} and \cite{Brandenbursky15}, estimating the conjugation-generated norms is not straightforward.
Hence, we adapt the concept of a fibered surface, introduced in \cite{BestvinaHandel95} and carfully construct representatives of a given Dehn twist.

Along the proof of the well-definedness of $\cG$, we make use of the word-length estimation of Gambaudo-Ghys cocycles (\reflem{finiteness}).
To do this, in \refsec{wordLen}, we introduce a blowing-up technique to compactify the configuration space $X_2(M)$ of the distinct two points in the M\"obius band $M$. 
Then, we provide the proof of the word-length estimation of Gambaudo-Ghys cocycles (\reflem{finiteness}), using the blowing-up technique.
Our blowing-up set is a refinement of the blowing-up set, introduced by Gambaudo and P\'ecou \cite{GambaudoPecou99}.
Unlike in \cite{GambaudoPecou99}, our blowing-up set does not change the topology of $X_2(M)$ (\reflem{homEqu}).
This compactification is essentially the same as the compactification introduced in \cite[Section~2]{Brandenbursky22}.

\section{Preliminary}\label{Sec:basicNotion}
\subsection{Conjugation-generated norms}
Let $G$ be a group and $S$ a finite subset of $G$.
We say that $S$ \emph{finitely generates} $G$ if every element $g\in G$ can be written as a product 
\[
g=g_1g_2 \cdots g_N
\]
where one of $g_i$ and $g_i^{-1}$ is an element in $S$.
The minimal possible $N$ for such products is called   the \emph{word length} of $g$ with respect to $S$ and it is denoted by $\ell_S(g)$.

In \cite[Section~1.2.1]{BuragoIvanovPolterovich08}, they introduced conjugation-generated norms as follows. 
We say that $S$ \emph{finitely conjugation-generates}(or finitely c-generates) $G$ if every element $g\in G$ can be written as a product 
\[
g=g_1g_2 \cdots g_N
\]
where one of $g_i$ and $g_i^{-1}$ is conjugate to an element in $S$.
Also, the minimal possible $N$ for such products is denoted by $q_S(g)$.
We say that the norm $q_S$ is \emph{c-generated} by $S$. 

\subsection{Quasimorphisms}
In this subsection, we recall the definition and some properties of quasimorphism.
We refer the reader to \cite{Calegari09} for details.
A real valued function $\mu \colon G \to \RR$ on a group $G$ is called a \emph{quasimorphism} if there exists a non-negative constant $D$ such that for every $g,h \in G$, the inequality \[|\mu(gh) - \mu(g) - \mu(h)| \leq D\] holds.
Also, the minimum value of such a $D$  is called  the \emph{defect} of $\varphi$, denoted by $D(\varphi)$.
A quasimorphism $\mu$ is said to be \emph{homogeneous} if $\mu(g^k) = k \mu(g)$ holds for every $g \in G$ and $k \in \ZZ$.
Let $Q(G)$ denote the space of homogeneous quasimorphisms over $G$.

The homogeneity condition is not so restrictive. In fact, for every quasimorphism $\mu$, the function $\overline{\mu} \colon G \to \RR$ defined by 
\[
    \overline{\mu} (g) = \lim_{p \to +\infty} \frac{\mu(g^p)}{p}
\]
is a homogeneous quasimorphism and the difference $\overline{\mu} - \mu$ is a bounded function.
In particular, the existence of unbounded quasimorphisms is equivalent to the existence of homogeneous quasimorphisms.
We call $\overline{\mu}$ the \emph{homogenization of} $\mu$.

In the last part of the proof of \refthm{inftyDim}, we use the following basic fact.
\begin{prop}[{See \cite[Subsection 2.2.3]{Calegari09}}]\label{Prop:conjInv}
    Every homogeneous quasimorphism $\mu \colon G \to \RR$ is invariant under conjugation.
\end{prop}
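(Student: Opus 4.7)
The plan is to exploit the identity $(hgh^{-1})^n = h g^n h^{-1}$ together with homogeneity to control the conjugate by $\mu(g)$ up to an error that vanishes upon dividing by $n$.

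First I would record two immediate consequences of homogeneity. Applying $\mu(g^k) = k\mu(g)$ with $k=0$ gives $\mu(e) = 0$, and applying it with $k = -1$ gives $\mu(h^{-1}) = -\mu(h)$ for every $h \in G$. These identities will be used to cancel the contribution of the conjugating element.

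Next, fix $g, h \in G$ and let $D$ denote the defect of $\mu$. For any positive integer $n$, I would apply the quasimorphism inequality twice to bound
\[
    \bigl|\mu(h g^n h^{-1}) - \mu(h) - \mu(g^n) - \mu(h^{-1})\bigr| \leq 2D.
\]
Using $\mu(h^{-1}) = -\mu(h)$, this simplifies to $|\mu(h g^n h^{-1}) - \mu(g^n)| \leq 2D$. Since $(hgh^{-1})^n = h g^n h^{-1}$ and $\mu$ is homogeneous, the left-hand side equals $|n\mu(hgh^{-1}) - n\mu(g)|$, so dividing by $n$ yields
\[
    \bigl|\mu(hgh^{-1}) - \mu(g)\bigr| \leq \frac{2D}{n}.
\]
Letting $n \to \infty$ gives $\mu(hgh^{-1}) = \mu(g)$, as desired.

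There is no serious obstacle here: the whole argument is a standard ``power-and-divide'' trick, and the only nontrivial input is homogeneity, which is precisely what allows the defect error to be absorbed into a $1/n$ term.
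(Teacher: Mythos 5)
Your proof is correct, and it is the standard argument (the ``power-and-divide'' trick you name). The paper does not prove this proposition at all --- it simply cites Calegari's book \cite[Subsection 2.2.3]{Calegari09} --- and the argument given there is essentially the one you wrote: conjugate, use $(hgh^{-1})^n = hg^nh^{-1}$ and homogeneity, bound the defect by a constant, divide by $n$, and let $n\to\infty$. So you have, in effect, supplied the reference's proof rather than a genuinely different one; there is nothing to correct.
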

As an immediate corollary,  we have the following proposition:
\begin{prop}\label{Prop:qValue}
    If $G$ is finitely $c$-generated by $S$ and  $\mu:G\to \RR$ is a homogeneous quasimorphism, then  we have
    \[
    \mu(g) \leq (\fM+D(\mu)) q_S(g)
    \]
    for all $g\in G$,
    where $\fM=\max \{\mu(s^{\pm1}):s\in S\}$. 
\end{prop}

\subsection{Twisted differential forms}\label{Sec:oddForm}
In \cite{deRham84}, de Rham  introduced the differential form of odd type. 
In \cite{BottTu82}, Bott and Tu also discussed this in terms of twisted de Rham complex.
We refer to the books \cite{deRham84} and \cite{BottTu82} for the detailed expositions about elementary algebraic topological facts with differential forms of odd type, e.g. Stokes' theorem.

Recall the definition of the orientation bundle  of a smooth manifold $\cN$ with or without boundary.
We denote it by $L_{\cN}$. 
Also, recall the \emph{trivialization induced from the atlas $\{(U_\alpha, \phi_\alpha)\}$ on $\cN$} and the \emph{standard locally constant sections} (see \cite[page~84]{BottTu82} and \cite[page~80]{BottTu82}, respectively).
From now on, whenever we mention a trivialization of the orientation bundle, it refers to the trivialization induced from a given atlas.

For the simplicity, we call an $L_\cN$-valued differential $p$-form a \emph{twisted differential $p$-form}, and we let $\Omega^p(\cN;L_\cN)$ denote the set of twisted differential $p$-forms over $\cN$.
Note that the twisted differential forms are equivalent objects to the differential forms of odd type in \cite{deRham84}.
A \emph{density form} of $\cN$ is a twisted $(\dim \cN)$-form which is nowhere zero.

One of the most tricky parts in \cite{deRham84} is to define a pullback of a differential forms of odd type by some smooth map $h$. 
To do this, we need a converting rule between standard locally constant sections of the domain and range of $h$.
Thus, we include some exposition about a pullback.

Let $\cN$ and $\cM$ be connected smooth manifolds with or without boundary of dimension $n$ and $m$, respectively, possibly non-orientable.
Let $h:\cN \to \cM$ be a smooth map and $\nu$ an $L_\cM$-valued $p$-form in $ \Omega^p(\cM;L_\cM)$. 
To define the pullback of $\nu$ by $h$, we need  a well-defined bundle morphism $h_L:L_\cN \to L_\cM$ such that for any trivializations $(U,\phi)$ and $(V,\psi)$ of $L_\cN$ and $L_\cM$, respectively, with $h(U)\subset V$, if $e_V$ is the standard locally constant section of $L_\cM$ over $V$, then the local section $e$ of $L_\cN$ over $U$, defined as 
\[
    h_L(e(x))=e_V(h(x))
\] 
is either the standard locally constant section $e_U$ of $L_{\cN}$ over $U$ or $-e_U$.
If there is such an $h_L$, then $h$ is said to be \emph{orientable} and if such an $h_L$ is fixed, then $h$ is said to be \emph{oriented} by $h_L$.
In this case, the \emph{pullback} $    h^*\nu$ of $\nu$ by $h$ with respect to $h_L$ is defined as 
\[
    (h^*\nu)_x= h^*v \otimes h_L^{-1}(e)
\]
for $v\in (\bigwedge^p T^*\cM)_{h(x)}$ and $e\in L_{h(x)}$ with $\nu=v\otimes e$.
Note that $h_L$ is the concept corresponding to the \emph{orientation} of a map $h$ in \cite{deRham84}.

In particular, if $n=m$ and $h$ has no critical point, then there is a canonical bundle morphism $h_L:L_\cN \to L_\cM$ such that for any trivializations $(U,\phi)$ and $(V,\psi)$ of $L_\cN$ and $L_\cM$, respectively, such that $h(U)\subset V$ and the Jacobian determinant of $\psi\circ h\circ \phi^{-1}$  is positive on $\phi(U)$, if $e_V$ is the standard locally constant section of $L_\cM$ over $V$, then the local section $e$ of $L_\cN$ over $U$, defined as 
\[
    h_L(e(x))=e_V(h(x))
\] is equal to the standard locally constant section $e_U$ of $L_{\cN}$ over $U$. 
In this case, the map $h_L$ is the same thing with the \emph{canoical orientation} of the map $\iota$, introduced in \cite[page~21]{deRham84}.
From now on, we use the canonical orientation without mentioning if there is no confusion.

\subsection{M\"{o}bius band}
In this subsection, we fix some notations about the M\"{o}bius band.
From now on, whenever we mention  $M$, we refer to the closed M\"obius band.
Also, we use the following conventions without further mention  in the rest of the paper.

We set $I=[-1/2,1/2]$ and $\widetilde{M}:= \RR \times I$.
Let $\tau:\widetilde{M} \to \widetilde{M}$ be the deck transformation defined as 
\begin{align*}
    \tau(\begin{bmatrix} z \\ w \end{bmatrix})=\begin{bmatrix} 1 && 0 \\ 0 && -1 \end{bmatrix}\begin{bmatrix} z \\ w \end{bmatrix} + \begin{bmatrix} 1 \\ 0 \end{bmatrix}.
\end{align*}
The M\"obius band $M$ is defined by $\widetilde{M}/\langle \tau \rangle$. 
Let $\pi:\widetilde{M}\to M$ be the quotient map.
For convenience, we also define the \emph{M\"obius band $M_r$ with width $r$} as $M_r=\pi(\RR\times [-r/2,r/2])$.
Note that $M_1=M$.

For small $\epsilon$, we set 
\begin{align*}
    \fU :=& (-1/2-\epsilon, \epsilon)\times I,\\
    \fV :=& (-\epsilon, 1/2+\epsilon)\times I,\\
    \fW_0 :=& (-1/2-\epsilon, -1/2+\epsilon)\times I,\\
    \fW_1 :=& (-\epsilon, \epsilon)\times I,\\ 
    \fW_2 :=& (1/2-\epsilon, 1/2+\epsilon)\times I.
\end{align*}
Let $U:=\pi(\fU)$ and $V:=\pi(\fV)$, which cover $M$. 
Also, write $W_0=\pi(\fW_0)=\pi(\fW_2)$ and $W_1=\pi(\fW_1)$.
    
For coordinate maps, set
\begin{align*}
    \varphi_U:&U\to \fU,\varphi_U:=(\pi|_\fU)^{-1}, \\
    \varphi_V:&V\to \fV,\varphi_V:=(\pi|_\fV)^{-1}.
\end{align*}
The connection for the line bundle $L_M$, $g_{UV}: U\cap V \to \{\pm 1\}$, is defined as $g_{UV}(w)=-1$ if $w\in W_0$ and $g_{UV}(w)=1$ if $w\in W_1$.
The local sections are given by
\begin{align*}
    e_U:&U\to U\times \RR, e_U:w \mapsto (w,1), \\
    e_V:&V\to V\times \RR, e_V:w \mapsto (w,1). 
\end{align*}
Then a density form $\omega \in \Omega^2(M,L_M)$ is defined by
\begin{align*}
    (\varphi_U^{-1})^*\omega&:=(dx \wedge dy) \otimes e_U\\
    (\varphi_V^{-1})^*\omega&:=(du \wedge dv) \otimes e_V
\end{align*}
where $(x,y)\in \fU$ and $(u,v) \in \fV$.

\section{Exactness of density forms}\label{Sec:exact}

De Rham showed the homotopy invariance for homology groups of currents (which are generalizations of singular chains and differential forms). 
See \cite[$\mathsection 18.$ Homology Groups]{deRham84}.
We rephrase the theorem for our purpose as follows:
\begin{prop}[Homotopy invariance of twisted de Rham cohomologies]
    Let $\cN,\cM$ be compact, connected, smooth manifolds, possibly non-orientable, and $F,G$ smooth maps from $\cN$ to $\cM$. If there is a smooth homotopy $H:\cN\times [0,1]\to \cM$ from $F$ to $G$ and $H$ is oriented, then 
    for all $i\geq 0$, the induced homomorphisms
    $F^*, G^*:\HHH^i(\cM;\cL_\cM)\to \HHH^i(\cN;\cL_\cN)$ coincide.
\end{prop}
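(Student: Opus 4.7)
The plan is to adapt the standard chain homotopy argument for ordinary de Rham cohomology to the twisted setting, with all of the new content concentrated in the bookkeeping for the orientation bundle. Let $p \colon \cN \times [0,1] \to \cN$ denote the projection and $i_0, i_1 \colon \cN \to \cN \times [0,1]$ the inclusions at the endpoints. Since $H \circ i_0 = F$ and $H \circ i_1 = G$, once $i_0$ and $i_1$ are equipped with natural orientations compatible with $H_L$, functoriality of oriented pullback reduces the theorem to the statement that $i_0^* = i_1^*$ on $\HHH^{i}(\cN \times [0,1]; L_{\cN \times [0,1]})$.

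The structural input that makes the twisted argument go through is the canonical isomorphism
\[
L_{\cN \times [0,1]} \;\cong\; p^* L_\cN,
\]
coming from the fact that $[0,1]$ is orientable and hence $L_{[0,1]}$ is canonically trivial. Under this identification, each twisted form $\alpha \in \Omega^q(\cN \times [0,1]; L_{\cN \times [0,1]})$ admits a unique decomposition
\[
\alpha \;=\; \alpha_0(t) + dt \wedge \alpha_1(t),
\]
with $\alpha_0(t) \in \Omega^q(\cN; L_\cN)$ and $\alpha_1(t) \in \Omega^{q-1}(\cN; L_\cN)$ depending smoothly on $t \in [0,1]$. The same identification equips $i_0$ and $i_1$ with canonical orientations, for which $i_k^*\alpha = \alpha_0(k)$ for $k = 0, 1$.

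Next, I would introduce the chain homotopy operator $K \colon \Omega^q(\cN \times [0,1]; L_{\cN \times [0,1]}) \to \Omega^{q-1}(\cN; L_\cN)$ by fiber integration,
\[
K\alpha \;=\; \int_0^1 \alpha_1(t)\, dt,
\]
and verify the Leibniz-type identity
\[
dK\alpha + Kd\alpha \;=\; i_1^*\alpha - i_0^*\alpha.
\]
This is a local computation in trivializations of $L_\cN$, in which the twisted exterior derivative reduces to the ordinary one, so the classical calculation applies verbatim. Passing to cohomology gives $i_0^* = i_1^*$, hence $F^* = G^*$.

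The main obstacle is not the chain homotopy itself but the compatibility of orientations: one must check that the product decomposition $L_{\cN \times [0,1]} \cong p^* L_\cN$ is consistent, on every chart overlap of the fixed atlases, with the canonical orientations of $i_0, i_1$ and with the given orientation $H_L$ of $H$, so that $(H \circ i_k)^* = i_k^* \circ H^*$ holds as an honest equality of maps and not merely up to a global sign. This reduces to a transition-function check using the cocycles of $L_\cN$, $L_\cM$, and $L_{\cN \times [0,1]}$; once it is in place, the proof proceeds exactly as in the orientable case.
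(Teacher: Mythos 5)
Your proof is correct, but it follows a genuinely different route from the paper, which in fact offers no proof of this proposition: it simply appeals to de Rham's theory of currents (citing his book, Section 18), where homotopy invariance is established once and for all in the category of currents, which subsumes both singular chains and twisted differential forms. Your argument instead carries out the classical chain-homotopy construction entirely at the level of twisted forms: reduce to showing $i_0^* = i_1^*$ on the cylinder $\cN \times [0,1]$, use the canonical isomorphism $L_{\cN \times [0,1]} \cong p^*L_{\cN}$ coming from the orientability of $[0,1]$, decompose a twisted form into its $dt$ and non-$dt$ parts, and fiber-integrate to build the homotopy operator $K$ satisfying $dK + Kd = i_1^* - i_0^*$, which holds by a local computation in which the twist disappears in a trivialization. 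The only genuinely new ingredient over the untwisted case is the orientation bookkeeping, and you correctly isolate the structural facts needed: triviality of $L_{[0,1]}$, canonical orientations of the inclusions $i_0, i_1$, and compatibility of the given $H_L$ with restriction to the two ends. The paper's citation-based approach is shorter and inherits the result from a strictly more general theorem; yours is self-contained and makes explicit exactly which orientation compatibilities are required, which is a reasonable alternative for a reader not wishing to enter de Rham's formalism of currents.
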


Let $\cN$ be a compact, connected $n$-manifold with non-empty boundary, possibly non-orientable. 
We denote the interior of $\cN$ by $\interior (\cN)$.
\begin{prop}\label{Prop:equiToInt} $\HHH^i(\cN;L_\cN)\cong \HHH^i(\interior (\cN);L_{\interior (\cN)})$ for all $i\in \ZZ_{\geq 0}$.
\end{prop}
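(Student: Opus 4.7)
The plan is to produce an explicit homotopy inverse $r: \cN \to \interior(\cN)$ to the inclusion $\iota: \interior(\cN) \hookrightarrow \cN$ using a collar of $\partial \cN$, verify that both compositions $\iota \circ r$ and $r \circ \iota$ are smoothly isotopic to the identity via \emph{oriented} isotopies, and then invoke the homotopy invariance of twisted de Rham cohomology stated above to conclude that $\iota^*$ and $r^*$ are mutually inverse isomorphisms.

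By the collar neighborhood theorem, I fix a smooth embedding $c: \partial \cN \times [0,1) \hookrightarrow \cN$ with $c(y, 0) = y$, and pick a smooth strictly increasing $\lambda: [0,1) \to (0,1)$ with $\lambda(0) = 1/2$ and $\lambda(t) = t$ for $t \geq 3/4$. Set $r(x) := c(y, \lambda(s))$ for $x = c(y, s)$ in the collar and $r(x) := x$ outside (the two formulas agree where $s \geq 3/4$, and the image lies in $\interior(\cN)$ since $\lambda \geq 1/2$). Replacing $\lambda$ by $\mu_t(s) := (1-t)s + t\lambda(s)$ gives a smooth family $H_t: \cN \to \cN$ of embeddings with $H_0 = \id_\cN$ and $H_1 = \iota \circ r$, each preserving $\interior(\cN)$ because $\mu_t(s) > 0$ whenever $s > 0$ or $t > 0$; restricting to $\interior(\cN)$ yields an isotopy from $\id_{\interior(\cN)}$ to $r \circ \iota$. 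Each $H_t$ is a local diffeomorphism, so the canonical orientation for local diffeomorphisms recalled in \refsec{basicNotion} assembles continuously in $t$ to orient $H$ on both $\cN \times [0,1]$ and $\interior(\cN) \times [0,1]$.

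Applying the homotopy invariance proposition to these oriented homotopies gives $(\iota \circ r)^* = \id^*$ on $\HHH^i(\cN; L_\cN)$ and $(r \circ \iota)^* = \id^*$ on $\HHH^i(\interior(\cN); L_{\interior(\cN)})$, so $\iota^*$ and $r^*$ are mutually inverse isomorphisms, as required.

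The main obstacle is that the homotopy invariance proposition as stated requires both source and target to be compact, whereas $\interior(\cN)$ is not. This is resolved by observing that the standard chain-homotopy proof -- via the Cartan-style operator $K\alpha = \int_0^1 H_t^*(\iota_{X_t}\alpha)\, dt$, where $X_t$ is the time-dependent vector field generating $H_t$ -- is entirely pointwise and transfers verbatim to the twisted, non-compact setting once $H$ has been given an orientation. Establishing (or citing) this extension is where the real content of the argument lies.
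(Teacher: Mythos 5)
Your proposal is correct and takes essentially the same route as the paper: the paper likewise constructs a homotopy inverse $R\colon\cN\to\interior(\cN)$ to the inclusion via the collar neighborhood theorem (citing Lee, Theorem~9.26 and its proof), observes that the homotopies can be oriented canonically, and applies the twisted homotopy invariance proposition. You have merely made the construction of $r$ and the interpolating family $H_t$ explicit.

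Your concluding observation is an genuine improvement on the paper's presentation: as stated, the homotopy invariance proposition requires \emph{both} manifolds to be compact, whereas one side of the equivalence involves the non-compact $\interior(\cN)$, so the proposition is being applied outside its stated hypotheses. Your fix --- that the Cartan-type operator $K\alpha=\int_0^1 H_t^*(\iota_{X_t}\alpha)\,dt$ is constructed pointwise, hence carries over verbatim to twisted forms on non-compact manifolds once $H$ is oriented --- is the right repair, and indeed de Rham's original formulation in \S18 of \emph{Diff\'erentiable Manifolds} does not impose compactness; the restriction in the paper's rephrasing is unnecessary. It would be worth stating the invariance proposition without the compactness hypothesis (or restricting to the case of a proper homotopy equivalence) so that the application to $\interior(\cN)$ is literally covered.
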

\begin{proof}
 From  \cite[Theorem~9.26]{Lee13} and its proof, we can see that there is a proper smooth embedding $R:\cN \to \interior (\cN)$ such that 
 both $\iota\circ R:\cN \to \cN$ and $R\circ \iota: \interior (\cN) \to \interior (\cN)$ are smoothly homotopic to the identities,
 where  $\iota : \interior (\cN) \to \cN$ is the inclusion map.
Moreover, the homotopies can be oriented in a canonical way.
Therefore, by the homotopy invariance of twisted de Rham cohomologies, we can obtained  the desired results. 
\end{proof}

Then, we observe that every density form $\omega$ in $\cN$ is exact.
\begin{lem}\label{Lem:exact} 
    There is a twisted $(n-1)$-form $\eta$ such that $d\eta=\omega$. 
\end{lem}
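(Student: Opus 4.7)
The plan is to establish the stronger vanishing $\HHH^n(\cN; L_\cN) = 0$; since $\omega$ is a twisted top form on the $n$-manifold $\cN$, its exterior derivative lives in $\Omega^{n+1}(\cN; L_\cN) = 0$, so $\omega$ is automatically closed and defines a class $[\omega] \in \HHH^n(\cN; L_\cN)$. Exactness of $\omega$ is then equivalent to the vanishing of this class.

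To compute $\HHH^n(\cN; L_\cN)$, I would first invoke \refprop{equiToInt} to reduce the problem to showing $\HHH^n(\interior(\cN); L_{\interior(\cN)}) = 0$. Set $X := \interior(\cN)$ and let $\pi \colon \widehat{X} \to X$ denote the orientation double cover, with orientation-reversing deck involution $\sigma$. The total space $\widehat{X}$ carries its canonical orientation, so $\pi^* L_X$ is canonically trivialized, and pullback by $\pi$ identifies the twisted de Rham complex $(\Omega^*(X; L_X), d)$ with the $\sigma^*$-antiinvariant subcomplex of the ordinary de Rham complex $(\Omega^*(\widehat{X}; \RR), d)$. In particular, $\HHH^n(X; L_X)$ embeds into $\HHH^n(\widehat{X}; \RR)$, so it is enough to show the latter vanishes.

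Now $\widehat{X}$ is an orientable $n$-manifold whose one or two connected components are each non-compact, since they are finite-sheeted covers of the non-compact manifold $X$. For any connected non-compact orientable $n$-manifold $Y$, Poincar\'e duality gives $\HHH^n(Y; \RR) \cong H^{BM}_0(Y; \RR) = 0$, so $\HHH^n(\widehat{X}; \RR) = 0$, finishing the proof. The main technical point to verify with care is the identification in the second paragraph between twisted forms on $X$ and $\sigma^*$-antiinvariant ordinary forms on $\widehat{X}$: this is the expected behavior of the orientation cover, but the sign conventions must be matched against the local description of $L_\cN$ and the pullback formalism recalled in \refsec{basicNotion}. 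As an alternative route bypassing \refprop{equiToInt}, one can apply the twisted Poincar\'e--Lefschetz duality $\HHH^n(\cN; L_\cN) \cong \HHH_0(\cN, \partial \cN; \RR)$ directly, and conclude from the hypothesis $\partial \cN \neq \emptyset$ and the connectedness of $\cN$ that the right-hand side is zero.
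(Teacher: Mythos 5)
Your proposal is correct, and it shares the first reduction with the paper (passing to $\interior(\cN)$ via \refprop{equiToInt}), but the core of the argument is genuinely different. The paper applies \emph{twisted} Poincar\'e duality on the open manifold directly, citing \cite[Theorem~7.8]{BottTu82} to get $\HHH^n(\interior(\cN);L_{\interior(\cN)}) \cong \HHH^0_c(\interior(\cN)) = 0$, with vanishing since the interior is connected and non-compact. You instead pass to the orientation double cover $\widehat{X} \to X := \interior(\cN)$, identify the twisted complex on $X$ with the deck-anti-invariant subcomplex of ordinary forms on $\widehat{X}$ (which, over $\RR$, splits off as a direct summand in cohomology via averaging), and then invoke \emph{ordinary} Poincar\'e duality for the non-compact orientable $n$-manifold $\widehat{X}$. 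The trade-off: the paper's route is shorter and treats twisted cohomology as a black box via Bott--Tu, whereas yours replaces the twisted duality theorem with the more elementary untwisted one at the cost of verifying the identification of twisted forms with anti-invariant forms on the cover (a standard but sign-sensitive bookkeeping step you correctly flag). Your closing alternative --- twisted Poincar\'e--Lefschetz duality $\HHH^n(\cN;L_\cN) \cong \HHH_0(\cN,\partial\cN;\RR) = 0$ applied directly on the compact manifold with boundary --- would bypass \refprop{equiToInt} entirely and is arguably the cleanest of the three, though it requires the relative twisted duality statement, which the paper's chosen reference does not supply off the shelf.
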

\begin{proof}
    When $\cN$ is orientable, it is already known. 
    Assume that $\cN$ is non-orientable.
    To see this, it is enough to show that $\HHH^n(\cN;L_\cN)=0$.
    It follows from the following equalities: 
    \[\HHH^n(\cN;L_\cN)\cong \HHH^n(\interior (\cN);L_{\interior (\cN)})\cong \HHH^0_c(\interior (\cN))=0.
    \]
    The first equality comes from \refprop{equiToInt}, and the second equality follows from the Poincar\'e duality (e.g. \cite[Theorem~7.8]{BottTu82}).
    Then, the third one is obtained by the direct computation 
    since $\interior (\cN)$ is a connected, non-compact manifold.
\end{proof}

\section{Contractibility of the identity component}\label{Sec:fiberContractible} 
Now, we prove \refthm{contractible}, which allows us to define the Gambaudo-Ghys type cocycles (\refsec{GGcocycle}).
Let $\cN$ be a connected manifold with non-empty boundary. When $\cN$ is orientable, Tsuboi showed that the homotopy fiber of $\Diff_\Omega(\cN, \partial \cN)_0\to \Diff(\cN,\partial \cN)_0$
is weakly contractible for an orientable manifold $\cN$. 
We  follow the argument in \cite[Proposition~2.4]{Tsuboi00}:
\begin{prop}\label{Prop:fiberContractible}
Let $\cN$ be a connected, compact manifold with non-empty boundary $\partial \cN$, that is possibly non-orientable. The homotopy fiber  of $$\Diff_\omega(\cN, \partial \cN)_0\to \Diff(\cN,\partial \cN)_0$$ is weakly contractible.
\end{prop}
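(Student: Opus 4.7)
The plan is to adapt Tsuboi's argument \cite[Proposition~2.4]{Tsuboi00} to the non-orientable setting, substituting the twisted Moser theorem \refthm{Moser} for its orientable counterpart. Showing that the homotopy fiber is weakly contractible amounts to verifying that for every $k\geq 0$, every smooth map of pairs
$\phi\colon (D^k,\partial D^k)\to (\Diff(\cN,\partial\cN)_0,\,\Diff_\omega(\cN,\partial\cN)_0)$
admits a homotopy relative to $\partial D^k$ to a map with image in $\Diff_\omega(\cN,\partial\cN)_0$.

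Given such a $\phi$, I would set $\omega_s:=\phi_s^*\omega$ for $s\in D^k$. Since the pullback of a density form by a diffeomorphism preserves the total integral, each $\omega_s$ is a density form with $\int_\cN\omega_s=\int_\cN\omega$, and $\omega_s=\omega$ whenever $s\in\partial D^k$. For $t\in[0,1]$, define $\omega_s^t:=(1-t)\omega+t\omega_s$. Because any two density forms on $\cN$ differ by a positive smooth function, every $\omega_s^t$ is again a density form; its integral equals $\int_\cN\omega$ for all $(s,t)$, and $\omega_s^t=\omega$ whenever $s\in\partial D^k$ or $t=0$.

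The core step is to apply a parametric version of \refthm{Moser} to $(\omega_s^t)$ to obtain a smooth family $\Phi_s^t\in\Diff(\cN,\partial\cN)_0$ satisfying $(\Phi_s^t)^*\omega_s^t=\omega$, $\Phi_s^0=\id$, and $\Phi_s^t=\id$ whenever $s\in\partial D^k$. Concretely, this reduces to choosing a smooth family of twisted $(n-1)$-forms $\alpha_s^t$ with $d\alpha_s^t=\partial_t\omega_s^t=\omega_s-\omega$ and $\alpha_s^t|_{\partial\cN}=0$, and then integrating the time-dependent vector field $X_s^t$ uniquely determined by $\iota_{X_s^t}\omega_s^t=-\alpha_s^t$. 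The boundary vanishing of $\alpha_s^t$ forces $X_s^t$ to vanish on $\partial\cN$, so $\Phi_s^t$ fixes $\partial\cN$; and the vanishing of $\partial_t\omega_s^t$ for $s\in\partial D^k$ allows one to take $\alpha_s^t\equiv 0$ there and obtain $\Phi_s^t\equiv\id$. The desired homotopy is then $H(s,u):=\phi_s\circ\Phi_s^u$, which starts at $\phi$, is stationary for $s\in\partial D^k$, and terminates in $\Diff_\omega(\cN,\partial\cN)_0$ because $(\phi_s\circ\Phi_s^1)^*\omega=(\Phi_s^1)^*\omega_s=(\Phi_s^1)^*\omega_s^1=\omega$.

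The main obstacle is the parametric, boundary-respecting choice of the primitives $\alpha_s^t$. The existence of \emph{some} primitive of $\omega_s-\omega$ follows from \reflem{exact}, but I need one that in addition vanishes on $\partial\cN$, depends smoothly on $(s,t)\in D^k\times[0,1]$, and is identically zero for $s\in\partial D^k$. Such a primitive can be produced by a \emph{linear} operator on forms, either via Hodge theory on the double of $\cN$ or via the explicit relative-Poincar\'e-lemma homotopy operator used in \cite{Banyaga74} and \cite{Bruveris}; linearity in the input $\omega_s-\omega$ then automatically supplies the smooth dependence on $(s,t)$ and the vanishing at $s\in\partial D^k$, where the input itself is zero.
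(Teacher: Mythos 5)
Your proposal follows the same parametric Moser strategy as the paper: pull back $\omega$ along the extension, interpolate linearly to a family of density forms with constant total integral, find parametrized boundary-vanishing primitives of the $t$-derivative, integrate the resulting time-dependent vector field, and compose with the original map. The twisted Moser input, the exactness input (\reflem{exact}), and the flow computation all match the paper's.

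The genuine difference is how the homotopy is made relative to $\partial D^k$. The paper's primitive is the \emph{concrete} form $\alpha_v = H(v)^*\eta - \eta$ (with $\eta$ a fixed primitive of $\omega$), corrected to vanish pointwise on $\partial\cN$ via a cutoff in a collar. This $\alpha_v$ (and hence the corrected $\beta_v$ and the flow $\varphi^{(v)}_t$) is generally \emph{nonzero} for $v\in S^{p-1}$, so the flow is not the identity there; the paper compensates with the two-piece radial re-parametrization $\fH_t$, which keeps $S^{p-1}$ pointwise fixed while pushing the interior into $\Diff_\omega$. You instead want $\Phi^t_s\equiv\id$ for $s\in\partial D^k$, which you get by positing a \emph{linear} primitive operator $P$ so that $\alpha_s = P(\omega_s-\omega)$ vanishes when the input does. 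This is cleaner if $P$ exists, but the existence is the weak point of your sketch: Hodge theory with relative boundary conditions gives $\iota^*\alpha=0$ (the pullback of $\alpha$ to $\partial\cN$ vanishes), not the stronger pointwise vanishing $\alpha|_{\partial\cN}=0$ that the twisted Moser theorem (\refthm{Moser}(ii)) requires; you still need a collar correction on top, and you need to check the whole composite is linear and $C^\infty$-continuous. The paper's route dodges all of this because $H(v)^*\eta-\eta$ is manifestly smooth in $v$ and satisfies $\iota^*\alpha_v=0$ for free (since $H(v)|_{\partial\cN}=\id$), at the modest cost of the re-parametrized homotopy. Your approach is salvageable (the collar correction is indeed linear, and a linear choice of $\alpha$ with $\iota^*\alpha=0$ can be made), but as written the existence and properties of $P$ are asserted rather than established.
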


\begin{proof}
    The case where $M$ is orientable is shown by Tsuboi \cite[Proposition~2.4]{Tsuboi00}.
    Assume that $\cN$ is non-orientable.
    In this case, we can think of the orientation bundle of $\partial \cN$ as the restriction of $L_\cN$ to $\partial \cN$. 
    Under this identification, the inclusion map $\iota:\partial \cN \to \cN$ is oriented.
    
    We denote the $n$-disk by $D^{n}$ and its boundary sphere by $S^{n-1}$.
    Choose $p>1$. 
    Let $h:S^{p-1}\to \Diff_\omega(\cN,\partial \cN)_0$ be a smooth map.
    We assume that we have a smooth extension $H:D^p \to \Diff(\cN, \partial \cN)_0$ of $h$, that is, $H{\restriction_{S^{p-1}}}=h$.
    Set \[\omega_t^{(v)}=(1-t)H(v)^*\omega+t\omega\] for all $t\in [0,1]$ and $v\in D^p$.
    Then, by \reflem{exact}, there is a twisted $(\dim(M)-1)$-form $\eta$ such that $d \eta= \omega$. Note that by the Stokes' theorem (e.g. see \cite{deRham84} for twisted differential forms),
    \[
    \int_\cN H(v)^*\omega =\int_{\partial \cN} (H(v){\restriction_{\partial \cN}})^* \eta = \int_{\partial \cN}\eta =\int_\cN \omega 
    \]
    for all $v\in D^p$.
    Put \[\alpha_v=H(v)^*\eta-\eta \text{ and so } d\alpha_v=H(v)^*\omega-\omega  \]
    for all $v\in D^p$.

    By the Collar Neighborhood Theorem (see e.g. \cite[Theorem~9.25]{Lee13}), $\partial \cN$ has a collar neighborhood, namely, there is a smooth embedding $j: \partial \cN 
    \times [0,1] \to \cN$ which restricts to the canonical inclusion map from $ \partial \cN \times 0 \to \partial \cN$.
    The image of $j$ is the collar neighborhood $U$ of $\partial \cN$.
    For the simplicity, we identify $U$ with $\partial \cN \times [0,1]$.
    
    Now, we take a smooth function $\mu$ on $\cN$ that is supported on $U$, is $1$ in a neighborhood of  $\partial \cN \times 0$ and is $0$ on a neighborhood of  $ \partial \cN \times 1$.
    Observe that since $\alpha_v{\restriction_{\partial \cN}}=0$, we can write 
    \[
    \alpha_v=a_v(y,t)\wedge dt + b_v(y,t)\omega_{\partial \cN}
    \]
    for $(y,t)\in \partial \cN\times [0,1]$ 
    where  $\omega_{\partial \cN}$ is the density form of $\partial \cN$ and $b_v(y,0)=0$.
    Put \[\beta_v=\alpha_v-d(\mu \cdot a_b(u,0)t).\]
    Note that $d\beta_v=d\alpha_v=H(v)^*\omega -\omega$ and $\beta_v(z)=0$ for all $z\in \partial \cN$.

    Now, we take the time-dependent vector field $X_t^{(v)}$ such that $i(X_t^{(v)})\omega_t^{(v)}=\beta_v$.
    Let $\varphi_t^{(v)}$ be the time-dependent flow of $\cN$ such that 
    \[
    \frac{\partial \varphi_t^{(v)}}{\partial t}(\varphi_t^{(v)}(z))=X_t^{(v)}(\varphi_t^{(v)}(z)).
    \]
    Then, 
    \begin{align*}
        \frac{\partial}{\partial t}(\varphi_t^{(v)})^* \omega_t^{(v)}&=(\varphi_t^{(v)})^* (L_{X_t^{(v)}}\omega_t^{(v)}+\frac{\partial \omega_t^{(v)}}{\partial t})\\
        &=(\varphi_t^{(v)})^*\left (d\left (i(X_t^{(v)})\omega_t^{(v)}\right )-H(v)^*\omega+\omega\right)\\
        &=0.
    \end{align*}
Therefore, we have that 
$\varphi_0^{(v)}=id_{\cN}$ and 
$(\varphi_1^{(v)})^*\omega=H(v)^*\omega$ for $v\in D^p$, and $(\varphi_t^{(v)})^*\omega=\omega$ for $v\in S^{p-1}$.
Set 
\[
\fH_t(v)=
\begin{dcases}
H(v/\Vert v \Vert) \circ (\varphi_{2t(1-\Vert v \Vert)}^{(v/\Vert v \Vert)})^{-1}  & \text{ for } \Vert v \Vert >1/2,\\
H(2v)(\varphi_t^{(2v)})^{-1} & \text{ for } \Vert v \Vert \leq 1/2. 
\end{dcases}
\]
Then, $\fH_0(v)=H(v)$ for all $v\in S^{p-1}$, $\fH_0(D^p)=H(D^p)$ and  $\fH_1(D^p)\subset  \Diff_\omega(M,\partial M)_0$.
Thus, we can conclude that $\Diff_\omega(M,\partial M)_0$ is weakly contractible.
\end{proof}

Recall that Earle-Schatz \cite{EarleSchatz70} showed the following result.
\begin{thm}
    Let $F$ be a smooth compact surface with boundary, possibly non-orientable. Then, 
    $\Diff(F,\partial F)_0$ is contractible.  
\end{thm}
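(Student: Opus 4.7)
The plan is to proceed by induction on the topological complexity of $F$, cutting along properly embedded arcs until only disks remain, where the result reduces to Smale's classical theorem $\Diff(D^2,\partial D^2)_0\simeq *$. This strategy applies uniformly to orientable and non-orientable surfaces, which is the case of interest here.

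For the base case $F=D^2$, the contractibility of $\Diff(D^2,\partial D^2)_0$ is Smale's theorem. For the inductive step, assume $F$ is connected and is not a disk. One can find a properly embedded arc $\alpha\subset F$ with $\partial\alpha\subset\partial F$ such that the surface $F'$ obtained by cutting $F$ along $\alpha$ has strictly smaller complexity in the lexicographic order on $(-\chi(F),|\pi_0(\partial F)|)$. Concretely: if $F$ has positive genus or contains a crosscap one cuts a non-separating arc through it (for the M\"obius band, the core arc does the job and $F'$ is a disk); otherwise $F$ has multiple boundary components and one cuts an arc connecting two of them. In each case $F'$ is strictly simpler than $F$.

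The next step is to apply the restriction fibration
\[
\Diff(F,\partial F\cup \alpha)_0\longrightarrow \Diff(F,\partial F)_0\longrightarrow \operatorname{Emb}^{\partial}(\alpha,F)_0,
\]
where the base is the component of the inclusion in the space of proper embeddings $\alpha\hookrightarrow F$ that fix $\partial\alpha$ pointwise. By the theorems of Gramain on spaces of arcs in surfaces, this base is contractible: the argument uses isotopy extension plus the fact that two properly embedded arcs with fixed endpoints that are homotopic rel $\partial\alpha$ are ambient isotopic, and goes through without any orientability assumption. From the long exact sequence, $\Diff(F,\partial F)_0$ is weakly equivalent to $\Diff(F,\partial F\cup\alpha)_0$, which is canonically identified with $\Diff(F',\partial F')_0$. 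Induction then gives weak contractibility, and since $\Diff(F,\partial F)_0$ has the homotopy type of a CW complex (it is a tame Fr\'echet Lie group), weak contractibility upgrades to genuine contractibility.

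The main obstacle is establishing contractibility of $\operatorname{Emb}^{\partial}(\alpha,F)_0$ in the non-orientable setting. Gramain's original proof is local and purely isotopy-theoretic, so it adapts unchanged. An alternative and perhaps cleaner route is to pass to the orientation double cover $\widetilde F\to F$, invoke the orientable case of Earle--Schatz for $\widetilde F$, and descend via an equivariance argument: diffeomorphisms of $F$ fixing $\partial F$ correspond to $\ZZ/2$-equivariant diffeomorphisms of $\widetilde F$ fixing $\partial\widetilde F$, and contractibility of the ambient group together with an averaging/retraction argument produces contractibility of the fixed-point subgroup.
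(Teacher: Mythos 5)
The paper does not give a proof of this theorem; it is stated as a recollection of Earle--Schatz \cite{EarleSchatz70}, whose original argument is analytic, built on quasiconformal maps and Teichm\"uller theory for bordered surfaces. Your proposal instead gives a topological proof by induction on $-\chi(F)$, cutting along properly embedded arcs and running the Palais--Cerf restriction fibration down to Smale's disk theorem. This is essentially Gramain's approach, and it is a perfectly legitimate --- arguably more elementary and more transparent --- alternative to the Teichm\"uller-theoretic route. The two methods buy different things: Earle--Schatz gives an explicit contraction tied to conformal structure, while the inductive/fibration argument works uniformly for orientable and non-orientable surfaces with no recourse to analysis and makes the reduction to the disk case completely visible.

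A few points in your sketch deserve tightening. First, the statement that arcs homotopic rel $\partial\alpha$ are ambient isotopic only establishes that $\operatorname{Emb}^{\partial}(\alpha,F)_0$ is path-connected; the full contractibility you need is Gramain's theorem, which requires a parametrized version of this argument for all higher homotopy groups, so you should cite it at that strength rather than appear to derive it from the $\pi_0$ fact alone. Second, the identification of the fiber $\Diff(F,\partial F\cup\alpha)_0$ with $\Diff(F',\partial F')_0$ is not literal: a diffeomorphism fixing $\alpha$ pointwise can still act nontrivially on the normal jet along $\alpha$, and one must first deformation-retract onto the subgroup fixing a tubular neighborhood of $\alpha$ (and then smooth the corners of the cut surface); this is standard but should be said. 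Third, the alternative route via the orientation double cover is the weakest part: contractibility of $\Diff(\widetilde F,\partial\widetilde F)_0$ does not by itself yield contractibility of the $\ZZ/2$-fixed subgroup, since fixed-point sets of involutions on contractible spaces need not be contractible; the ``averaging/retraction'' step would require producing a genuinely $\ZZ/2$-equivariant contraction, which neither the Earle--Schatz nor the Gramain contraction provides for free. Your main (cutting) argument stands on its own, so you should present the double-cover remark as a heuristic rather than as a complete alternative proof.
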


This theorem,
together with \refprop{fiberContractible},  implies the following contractibility. 
\begin{thm}\label{Thm:contractible}
 Let $F$ be a compact, connected surface with non-empty boundary, possibly non-orientable. Then, $\Diff_\omega(F,\partial F)_0$ is weakly contractible.
\end{thm}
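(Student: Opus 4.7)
The plan is to combine the two main inputs already assembled in this section: \refprop{fiberContractible}, which asserts that the homotopy fiber of the forgetful inclusion
\[
    \iota \colon \Diff_\omega(F,\partial F)_0 \longrightarrow \Diff(F,\partial F)_0
\]
is weakly contractible, and the Earle--Schatz theorem, which asserts that the target $\Diff(F,\partial F)_0$ is contractible. Both of these hold in the generality we need (compact connected surface with non-empty boundary, possibly non-orientable), so the remaining task is purely formal.

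First I would replace the inclusion $\iota$ by a Serre fibration (or work directly with the homotopy fiber $\mathrm{hofib}(\iota)$) in order to have an honest long exact sequence of homotopy groups
\[
    \cdots \to \pi_{k+1}\bigl(\Diff(F,\partial F)_0\bigr) \to \pi_k\bigl(\mathrm{hofib}(\iota)\bigr) \to \pi_k\bigl(\Diff_\omega(F,\partial F)_0\bigr) \to \pi_k\bigl(\Diff(F,\partial F)_0\bigr) \to \cdots
\]
at every basepoint. By Earle--Schatz, the outer two groups vanish for all $k \geq 0$, and by \refprop{fiberContractible} the group $\pi_k(\mathrm{hofib}(\iota))$ also vanishes for all $k \geq 0$. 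Exactness then forces $\pi_k\bigl(\Diff_\omega(F,\partial F)_0\bigr) = 0$ for all $k \geq 0$, which is precisely weak contractibility.

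The only subtlety is making sure the inputs apply to our setting. Since $F$ is a compact connected surface with non-empty boundary, \refprop{fiberContractible} applies directly with $\cN = F$. The Earle--Schatz result is stated for smooth compact surfaces with boundary with no orientability hypothesis, so it covers the non-orientable case as well. I would state these invocations explicitly and then record the conclusion.

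I do not expect a genuine obstacle here; the real work has already been done in \refsec{fiberContractible}. The one point to handle with a little care is the translation of ``the homotopy fiber is weakly contractible'' in \refprop{fiberContractible} into the statement that the inclusion $\iota$ induces an isomorphism on all homotopy groups, which is a standard consequence of the fibration sequence above. With that in hand, the proof of \refthm{contractible} is a two-line consequence.
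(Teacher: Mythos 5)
Your argument is correct and coincides with the paper's intended proof: the paper likewise deduces \refthm{contractible} immediately by combining the Earle--Schatz contractibility of $\Diff(F,\partial F)_0$ with \refprop{fiberContractible} via the long exact sequence of homotopy groups for the fibration sequence. Your write-up is simply a more explicit version of what the paper leaves as a one-line remark.
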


\section{Mapping class groups and Braid groups on the M\"obius band}\label{Sec:braidGroupOnM}
Before proceeding with the proof of \refthm{inftyDim}, we introduce some necessary notions and recall some facts about mapping class groups and braid groups on surfaces.

Let $S$ be a topological space.
For the clarity, we write $S^{\times n}$ for the product of $n$ copies of $S$. 
Also, we write $x_i$ for the $i$-th entry of $x\in S^{\times n}$. 
For a homeomorphism $h$ on $S$,  a homeomorphism $\bar{h}$ on $ S^{\times n}$ is defined as $\bar{h}(z)_i=h(z_i)$.
For any $n>1$, the $n$-th generalized diagonal $\Delta_n(S)$ of $S$ is defined as $$\Delta_n(S)=\{x\in S^{\times n}:x_i= x_j\text{ for some }i\neq j\}.$$
We define $X_n(S)$ as $X_n(S)=S^{\times n}\setminus \Delta_n(S)$ for all $n> 1 $, and set $X_1(S)=S$. 
If  $S$ is a surface equipped with a density form, then the measure induced from the density form  induces a canonical measure on $X_n(S)$.

The \emph{pure braid group} of a manifold $\cN$ with $n$-strands is defined by the fundamental group of  $X_n(\cN)$.
Likewise, the \emph{braid group} of a manifold $\cN$ with $n$-strands is defined by the fundamental group of  $X_n(\cN)/\fS_n$
where $\fS_n$ is the symmetric group of degree $n$, acting on $X_n(\cN)$ as coordinate permutations.

The connected orientable surface of genus $g$ with $b$ boundary components is denoted by $S_g^b$. Likewise, $N_g^b$ represents the connected non-orientable surface of genus $g$ with $b$ boundary components, e.g. $N_1^1$ is the closed M\"obius band. 

Let $F$ be a compact, connected surface and  $P=\{x_1,x_2,\ldots, x_p\}$  be a finite (possibly empty) subset of the interior of $F$. 
If $F$ is orientable, that is, $F=S_g^b$, then $\cH(F,P)$ is the set of orientation-preserving homeomorphisms $h$ of $F$ such that $h(P)=P$ and $h$ is the identity on each boundary component of $F$. 
If $F$ is non-orientable, that is, $F=N_g^b$, $\cH(F,P)$ is the set of homeomorphisms $h$ of $F$ such that $h(P)=P$ and $h$ is the identity on each boundary component of $F$. 
For the convenience, we simply write $\cH(F)$ instead of  $\cH(F,\emptyset)$.

We denote the subgroup of $\cH(F,P)$ preserving $P$ pointwise by $\PH(F,P)$.
Then, $\Mod(F,P)$ is $\pi_0(\cH(F,P))$ and $\PMod(F,P)$ is $\pi_0(\PH(F,P))$.
If the choice of $P$ is not significant, then we denote the set $P$ by its cardinality $p$, abusing the notation, that is, $\Mod(F,P)$ and $\PMod(F,P)$ are denoted by $\Mod(F,p)$ and $\PMod(F,p)$.

\subsection{Braid groups and Mapping class groups of the M\"obius band}\label{Sec:dimensionForBraid}
In this section, we observe that pure braid groups and braid groups on the M\"obius band admits countably many homogeneous quasimorphisms which are linearly independent.

By a small variation of \cite[Theorem~4.3]{McCarty63}, we can obtain the following lemma. 
See also the book of Farb and Margarlit, \cite[Section 9.1.4]{FarbMargalit12}.
\begin{lem}\label{Lem:fibration}
Let $P=\{p_1,p_2,\cdots, p_n\}$ be a finite subset of $\interior{M}$. Then, 
    \[
    \PH(M,P) \xrightarrow{F} \cH(M) \xrightarrow{ev_p} X_n(\interior{M})
    \]
is a fibration where $F$ is the forgetful map and $ev_p(f)=(f(p_1),\cdots,f(p_n))$. Also, 
    \[
    \cH(M, P) \xrightarrow{F}  \cH(M) \xrightarrow{ev_p} X_n(\interior{M})/\fS_n
    \]
is a fibration
where $\fS_n$ is the symmetric group of degree $n$.
\end{lem}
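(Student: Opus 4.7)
The plan is to exhibit $ev_p$ as a locally trivial fiber bundle, from which the Serre fibration property follows. The two sequences in the lemma are compatible via the covering map $\rho : X_n(\interior M) \to X_n(\interior M)/\fS_n$, so it suffices to argue local triviality for the first sequence and then transport it to the quotient. I will first check surjectivity of $ev_p$, then build a continuous local section around an arbitrary configuration $q$, use the section to produce a local trivialization, and finally descend the bundle structure to the $\fS_n$-quotient.

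\textbf{Surjectivity.} Given $q = (q_1,\ldots,q_n)\in X_n(\interior M)$, I produce $h\in\cH(M)$ with $h(p_i)=q_i$ inductively: since $\interior M$ is connected, one can choose an embedded path in $\interior M\setminus\{p_2,\ldots,p_n,q_2,\ldots,q_n\}$ from $p_1$ to $q_1$, thicken it to a disk disjoint from the remaining points, and use an ambient isotopy supported there (fixing $\partial M$) to move $p_1$ to $q_1$. Iterating while avoiding the already-placed points produces the desired $h$.

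\textbf{Local sections.} Fix $q\in X_n(\interior M)$ and a reference $h_0\in\cH(M)$ with $ev_p(h_0)=q$. Choose pairwise disjoint closed disks $D_1,\ldots,D_n\subset\interior M$ with $q_i\in\interior D_i$, and set $U=\prod_i\interior D_i$, which is an open neighborhood of $q$ in $X_n(\interior M)$. For each $x=(x_1,\ldots,x_n)\in U$, build $\phi_x\in\cH(M)$ supported in $\bigcup_i D_i$ with $\phi_x(q_i)=x_i$ as follows: on each $D_i$, identify $D_i$ with the unit disk so that $q_i$ corresponds to the origin, and use a standard radial compactly supported isotopy moving the origin to $x_i$ (for instance, a bump-function interpolation between the translation by $x_i-q_i$ near the center and the identity near $\partial D_i$). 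Extend by the identity outside $\bigcup_i D_i$. The construction is explicit enough to be continuous in $x\in U$ in the $C^0$ topology, and
\[
s(x) := \phi_x \circ h_0, \qquad x \in U,
\]
defines a continuous local section of $ev_p$ over $U$. Using $s$, the map
\[
U\times\PH(M,P)\;\longrightarrow\;ev_p^{-1}(U),\qquad (x,g)\longmapsto s(x)\circ g,
\]
is a homeomorphism with inverse $h\mapsto(ev_p(h),\,s(ev_p(h))^{-1}\circ h)$, giving a local trivialization. Hence $ev_p$ is a fiber bundle with fiber $\PH(M,P)$, and in particular a Serre fibration.

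\textbf{Descent to the braid sequence.} For the second sequence, the covering map $\rho$ restricts to a trivial covering over the evenly covered neighborhood $\rho(U)$ (shrinking $U$ if necessary so the $n!$ sheets are disjoint); composing $s$ with a chosen sheet gives a local section of the composed evaluation $\cH(M)\to X_n(\interior M)/\fS_n$. The same trivialization recipe now yields local triviality with fiber the set of $h\in\cH(M)$ satisfying $h(P)=P$ setwise, which is $\cH(M,P)$ and is canonically identified with $\cH(M\setminus P)$ by restriction. The main obstacle is the continuous construction of $\phi_x$; once one reduces to disjoint disks this is routine, and no complications arise from the non-orientability of $M$ since the supports lie in orientable disks and we work with boundary-fixing maps throughout.
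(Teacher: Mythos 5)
Your proof is correct in its main structure: the local-section/trivialization argument is the standard hands-on way to show the evaluation map is a fibration. The paper, however, does not prove this lemma directly — it simply cites it as a small variation of McCarty's Theorem~4.3 (McCarty, \emph{Homeotopy groups}, 1963) and points to Farb--Margalit, Section~9.1.4, for background. So your argument takes a genuinely different, more self-contained route: you construct continuous local sections by pushing configuration points inside small disjoint disks via bump-function isotopies, then read off a local trivialization with fiber $\PH(M,P)$, and descend the structure to the $\fS_n$-quotient. What this buys is a proof that does not depend on an external black box; McCarty's theorem buys you exactly the same statement (for a broader class of manifolds) without any of the construction work. Two minor remarks on your write-up: first, local triviality of a map over a paracompact base actually gives a Hurewicz fibration, which is slightly stronger than the Serre fibration you claim, and that stronger property is what is implicitly used when the authors take long exact sequences of the fibration in Corollary~5.6; second, the identification $\cH(M,P)\cong\cH(M\setminus P)$ you invoke in the descent step does need the standard fact that a boundary-fixing self-homeomorphism of the punctured surface extends over the punctures (via the end compactification), which is true but worth a word.
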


The following lemma was shown by Scott. See \cite[Lemma~0.11]{Scott70b}.
\begin{lem}[Scott]\label{Lem:contractible}
    $\cH(M)$ is contractible. 
\end{lem}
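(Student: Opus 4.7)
The plan is to reduce the contractibility of $\cH(M)$ to the Alexander trick on the disk via a cutting argument, using an isotopy-extension fibration. Fix the properly embedded arc $\alpha := \pi(\{0\} \times I)$ in the coordinates of \refsec{basicNotion}; its two endpoints lie on $\partial M$, and cutting $M$ along $\alpha$ yields a closed disk $D_\alpha$, since the strip $(0,1) \times I \subset \widetilde M$ is a fundamental domain for the $\tau$-action that avoids the lifts $\{n\} \times I$ of $\alpha$.

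By the isotopy extension theorem, the evaluation map
\[
    \cH(M) \longrightarrow \operatorname{Emb}_{\partial}(\alpha, M), \qquad h \mapsto h \circ \alpha,
\]
is a Serre fibration onto the space of proper embeddings of $\alpha$ in $M$ isotopic to the inclusion rel $\partial \alpha$. The fiber over $\alpha$ is the subgroup of $\cH(M)$ fixing $\alpha$ pointwise; cutting along $\alpha$ identifies this fiber with the group of homeomorphisms of the disk $D_\alpha$ fixing its boundary, which is contractible by the Alexander trick. It remains to show that the base $\operatorname{Emb}_{\partial}(\alpha, M)$ is contractible, whereupon the long exact sequence of the fibration will yield $\pi_k(\cH(M)) = 0$ for all $k \geq 0$, and hence contractibility via the ANR structure on $\cH(M)$.

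For path-connectedness of the base, any two properly embedded arcs sharing endpoints and both of whose complements in $M$ are disks are ambient isotopic by classical surface theory. For higher connectivity I would lift to the universal cover $\widetilde M = \RR \times I$, where such an arc pulls back to a $\tau$-equivariant locally finite collection of disjoint proper arcs indexed by $\ZZ$; in the simply connected strip one can then straighten families of such equivariant collections via a canonical isotopy (for instance, flowing each arc to a vertical segment along the $I$-bundle structure), continuously in the arc. Pushing back down yields a strong deformation retraction of $\operatorname{Emb}_{\partial}(\alpha, M)$ onto $\{\alpha\}$. The main obstacle is exactly this last step: performing the straightening so as to simultaneously preserve disjointness, properness, and $\tau$-equivariance with continuous dependence on the parameter, which I would handle by appealing to the existing literature on spaces of embedded arcs on surfaces (e.g.\ Gramain, or Scott's original argument in \cite{Scott70b}) rather than redoing it from scratch.
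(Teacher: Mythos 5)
The paper does not prove this lemma at all: it is quoted verbatim from Scott, with the citation \cite[Lemma~0.11]{Scott70b}, so there is no ``paper's proof'' to compare against. What you have written is an attempt to reconstruct a proof from scratch, so I will evaluate it on those terms.

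Your overall architecture is the standard Gramain-style one and is sound where it is complete. Cutting $M$ along $\alpha = \pi(\{0\} \times I)$ does produce a disk, since $[0,1]\times I$ is a fundamental domain whose boundary identification $(0,w)\sim(1,-w)$ is undone by the cut. The identification of the fiber with $\Homeo(D_\alpha,\partial D_\alpha)$ is correct: a homeomorphism fixing $\partial M$ and $\alpha$ pointwise cuts open to a homeomorphism of $D_\alpha$ fixing its entire boundary, and conversely such a homeomorphism of $D_\alpha$ glues back, so the fiber is contractible by the Alexander trick. Passing from weak contractibility to contractibility via the ANR property of $\cH(M)$ (Chernavskii/Edwards--Kirby local contractibility plus metrizability) is also legitimate, as is invoking the isotopy extension theorem in the topological category to get a Serre fibration.

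The genuine gap is exactly the one you flag: the contractibility of the base $\operatorname{Emb}_\partial(\alpha,M)$. Your sketch --- lift to the universal cover, obtain a $\tau$-equivariant locally finite family of disjoint proper arcs, then ``straighten'' --- is a reasonable heuristic, but the straightening is where all the work lives. The lifted arcs need not be graphs over $I$ (they may backtrack in the $\RR$-direction), so ``flowing to vertical segments along the $I$-bundle structure'' is not a canonical operation that obviously preserves embeddedness, disjointness, properness, and $\tau$-equivariance, let alone continuously in the parameter. Producing a continuous strong deformation retraction of the whole mapping space onto $\{\alpha\}$ requires a concrete mechanism (e.g.\ the Gramain/Hatcher techniques for spaces of embedded arcs, or in the topological category something like Scott's own apparatus). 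You acknowledge this and defer to the literature; since the reference you defer to is essentially \cite{Scott70b} itself, the argument is circular as a proof of the lemma, though perfectly acceptable as an explanation of \emph{why} the lemma holds. To make it a self-contained proof you would need to carry out the arc-space contractibility step, which is the real content of the statement.
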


Then, the following corollary follows from the long exact sequences of the fibrations in \reflem{fibration}, together with \reflem{contractible}.

\begin{cor}\label{Cor:equivalence}
    $P_n(M)=\PMod(M,n)$ and  $B_n(M)=\Mod(M,n)$ for all $n\in \NN$.
\end{cor}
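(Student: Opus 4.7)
My plan is to apply the long exact sequence of homotopy groups to each of the two fibrations furnished by \reflem{fibration}, and then use the contractibility of $\cH(M)$ from \reflem{contractible} to collapse the outer terms. This should reduce each identification to the connecting isomorphism $\pi_1(\text{base}) \cong \pi_0(\text{fiber})$, and almost no further work is needed.

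Concretely, for the pure braid statement I would extract from the first fibration the segment
\[
\pi_1(\cH(M)) \longrightarrow \pi_1(X_n(\interior M)) \longrightarrow \pi_0(\PH(M,P)) \longrightarrow \pi_0(\cH(M)).
\]
Since $\cH(M)$ is contractible, the outer groups vanish, giving $\pi_1(X_n(\interior M)) \cong \pi_0(\PH(M,P)) = \PMod(M,n)$. The same argument applied to the second fibration yields $\pi_1(X_n(\interior M)/\fS_n) \cong \pi_0(\cH(M\setminus P))$.

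Two small identifications then close the proof. First, to match $\pi_1(X_n(\interior M))$ with the definition $P_n(M) = \pi_1(X_n(M))$ (and similarly for $B_n(M)$), I would note that $M$ deformation retracts onto a compact submanifold sitting inside $\interior M$, and this retraction extends coordinatewise to a deformation retraction $X_n(M) \simeq X_n(\interior M)$ that descends to the $\fS_n$-quotient. Second, to replace $\pi_0(\cH(M\setminus P))$ by $\Mod(M,n) = \pi_0(\cH(M,P))$, I would use that each point of $P$ is an isolated puncture of $M\setminus P$ with a punctured-disk neighborhood, so restriction and extension-across-the-punctures give mutually inverse bijections at the level of isotopy classes.

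The only step that is not purely formal bookkeeping is this last extension-across-punctures identification; that is what I would call the main obstacle, though it is classical and can be handled by a standard Alexander-type argument on a disk neighborhood of each puncture. Everything else is essentially a direct transcription of the long exact sequence, and the overall proof should be short.
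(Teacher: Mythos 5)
Your proof follows exactly the same approach as the paper's: apply the long exact sequence of homotopy groups to the two fibrations of Lemma 6.2 and use the contractibility of $\cH(M)$ from Lemma 6.3 to collapse the sequence to the connecting isomorphism $\pi_1(\text{base}) \cong \pi_0(\text{fiber})$. The paper states this in a single sentence and leaves the two bookkeeping identifications (comparing $X_n(\interior M)$ with $X_n(M)$, and $\pi_0(\cH(M\setminus P))$ with $\pi_0(\cH(M,P))$) implicit, whereas you spell them out correctly; your account is a faithful and slightly more explicit version of the paper's argument.
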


In \cite{GoncalvesGuaschi17},  $\Gamma_{m,n}(\RP^2)$ is defined as $P_m(\RP^2\setminus\{x_1,\ldots, x_n\})$.
Observe that $\Gamma_{2,1}(\RP^2)=P_2(M)$. In particular, as in the proof of \cite[Proposition~11]{GoncalvesGuaschi17},  we also know that for $m, n\geq1$,  the following Fadell–Neuwirth short exact sequence of
pure braid groups of $\RP^2 \setminus\{x_1,\ldots,x_n\}$ holds:
\begin{align*}
  1\to 
  P_1(\RP^2\setminus \{x_1,\ldots,x_{n+m}\}) 
  \to \Gamma_{m+1,n}(\RP^2) 
  \xrightarrow{q} 
  \Gamma_{m,n}(\RP^2)
  \to 1,
\end{align*}
where the homomorphism $q$ is given geometrically by forgetting the last string.
\begin{prop}\label{Prop:basicCase}
    $P_2(M)=\Gamma_{2,1}(\RP^2)\cong F_2 \rtimes \ZZ$.
\end{prop}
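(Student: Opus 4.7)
The plan is to apply the Fadell--Neuwirth short exact sequence displayed just before the proposition with parameters $m = n = 1$, identify both end terms explicitly, and then exhibit a splitting. With these choices the sequence becomes
\[
    1 \to \pi_1(\RP^2 \setminus \{x_1, x_2\}) \to \Gamma_{2,1}(\RP^2) \to \Gamma_{1,1}(\RP^2) \to 1,
\]
so the work reduces to computing the fundamental groups of the once- and twice-punctured projective plane.

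For the quotient, $\Gamma_{1,1}(\RP^2) = \pi_1(\RP^2 \setminus \{x_1\})$. Since $\RP^2$ punctured at one interior point is the open M\"obius band, which deformation retracts onto its core circle, this group is $\ZZ$; indeed it coincides with $\pi_1(M)$.

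For the kernel, removing a further interior point from the open M\"obius band produces a non-compact surface that deformation retracts onto a wedge of two circles, namely the core circle together with a small loop around the second puncture. Its fundamental group is therefore the free group $F_2$. Equivalently, one can verify this by an Euler characteristic count: $\chi(\RP^2 \setminus \{x_1, x_2\}) = -1$ forces a free group of rank $2$ since every open surface has free fundamental group.

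Combining these identifications, the sequence takes the form
\[
    1 \to F_2 \to \Gamma_{2,1}(\RP^2) \to \ZZ \to 1,
\]
and since $\ZZ$ is free, the sequence splits and yields $\Gamma_{2,1}(\RP^2) \cong F_2 \rtimes \ZZ$. The only step that demands some care is the deformation retraction exhibiting the kernel as $F_2$; everything else is formal, following from the Fadell--Neuwirth sequence cited from Gon\c{c}alves--Guaschi together with the freeness of $\ZZ$.
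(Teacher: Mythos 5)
Your proof is correct and follows essentially the same route as the paper: apply the Fadell--Neuwirth sequence with $m=n=1$, identify the kernel as $\pi_1(\RP^2\setminus\{x_1,x_2\})\cong F_2$ and the quotient as $\pi_1(\RP^2\setminus\{x_1\})\cong\ZZ$, and conclude the semidirect product structure from the splitting over the free group $\ZZ$. You supply a bit more geometric detail (deformation retractions, Euler characteristic) than the paper does, but the argument is the same.
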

\begin{proof}
    Consider the Fadell–Neuwirth short exact sequence with $m=n=1$:
    \begin{align*}
  1\to 
  P_1(\RP^2\setminus \{x_1, x_{2}\}) 
  \to \Gamma_{2,1}(\RP^2) 
  \xrightarrow{q} 
  \Gamma_{1,1}(\RP^2)
  \to 1.  
\end{align*}
Thus, the result follows from the facts that $$ P_1(\RP^2\setminus \{x_1, x_{2}\})= \pi_1(\RP^2\setminus \{x_1,x_{2}\})\cong F_2 $$ and $$\Gamma_{1,1}(\RP^2)=P_1(\RP^2\setminus \{x_1\})\cong \ZZ.$$
\end{proof}

\begin{lem}\label{Lem:inftyDimBraid}
For $n\geq 2$, $Q(P_n(M))$ and  $Q(B_n(M))$ are of infinite dimension. 
\end{lem}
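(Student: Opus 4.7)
The plan is to reduce the claim to showing $\dim Q(P_2(M))=\infty$, and then to exploit the splitting $P_2(M)\cong F_2\rtimes\ZZ$ from \refprop{basicCase}. For $n\geq 2$, iterating the Fadell-Neuwirth short exact sequence from the discussion preceding \refprop{basicCase} yields a surjective homomorphism $P_n(M)\twoheadrightarrow P_2(M)$ by forgetting the last $n-2$ strands. Pullback along a surjection is an injection $Q(P_2(M))\hookrightarrow Q(P_n(M))$ that preserves linear independence, so the $P_n$ case follows from the $P_2$ case. Since $[B_n(M):P_n(M)]=n!<\infty$, and finite-index inclusions preserve infinite-dimensionality of $Q$ (via restriction together with averaging over cosets), the $B_n$ case in turn follows from the $P_n$ case.

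Second, to establish $\dim Q(P_2(M))=\infty$, I would produce a finite-index subgroup of $P_2(M)$ that surjects onto the non-abelian free group $F_2$, and then pull back Brooks' classical infinite family of linearly independent homogeneous quasimorphisms on $F_2$. The key observation is that the monodromy $\phi\colon\ZZ\to\Aut(F_2)$ appearing in $P_2(M)=F_2\rtimes_\phi\ZZ$ is point-pushing along the orientation-reversing core curve of $M$: while $\phi$ itself is non-trivial (it inverts a small loop around the marked point), its square $\phi^2$ is the identity in $\Aut(F_2)$. Concretely, taking $F_2=\langle\alpha,\beta\rangle$ with $\alpha$ a meridian around the puncture and $\beta$ a lift of the core, the orientation flip forces $\phi(\alpha)=\alpha^{-1}$, while the fact that point-pushing acts trivially on $\pi_1(M)=\ZZ$ constrains $\phi(\beta)=\beta\alpha^k$ for some $k\in\ZZ$; a direct substitution then yields $\phi^2=\mathrm{id}$. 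Hence the index-two subgroup $F_2\rtimes 2\ZZ$ of $P_2(M)$ coincides with $F_2\times\ZZ$, which surjects onto $F_2$; the pullbacks of Brooks' quasimorphisms give countably many linearly independent elements of $Q(F_2\rtimes 2\ZZ)$, and hence, by the finite-index reduction, of $Q(P_2(M))$.

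The main obstacle is the explicit monodromy calculation establishing $\phi^2=\mathrm{id}$ in $\Aut(F_2)$. This replaces the standard orientable Birman-sequence identification of point-pushing with inner automorphisms by a non-orientable variant: only the doubled core curve, which lifts to a loop in the orientable double cover of $M$, yields a tractable automorphism. Care is required in choosing the generating set $\{\alpha,\beta\}$ and in tracking the orientation-reversing effect of $\phi$ through the natural projection $F_2=\pi_1(M\setminus\{p\})\twoheadrightarrow\pi_1(M)=\ZZ$.
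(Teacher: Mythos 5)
Your approach is genuinely different from the paper's. The paper shows that $P_n(M)=\PMod(M,n)$ and $B_n(M)=\Mod(M,n)$ embed into the mapping class group $\Mod(S,2n)$ of a closed orientable surface $S$ (via the double-cover embedding of Katayama--Kuno into $\Mod(A,2n)$ for the orientation double cover $A$, followed by Paris--Rolfsen), and that both groups are not virtually abelian; then Bestvina--Fujiwara's theorem applies directly to both groups at once. You instead try to build quasimorphisms internally by reducing to Brooks quasimorphisms on $F_2$ through the splitting $P_2(M)\cong F_2\rtimes\ZZ$. That is an interesting alternative, but as written it has two real gaps.

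First, the finite-index reduction you invoke twice, namely that an infinite-dimensional $Q(H)$ for $H$ of finite index in $G$ forces $Q(G)$ to be infinite-dimensional ``via restriction together with averaging over cosets,'' is not a theorem and does not follow automatically. The restriction map $Q(G)\to Q(H)$ is injective for $[G:H]<\infty$, which gives $\dim Q(G)\le\dim Q(H)$, the \emph{wrong} inequality. For the converse one needs the coinvariants/transfer map $Q(H)\to Q(G)$ (equivalently, passage to the $G/H$-invariant subspace $Q(H)^{G/H}\cong Q(G)$), and this map can annihilate an arbitrarily large subspace: a finite group acting on an infinite-dimensional space can have finite-dimensional fixed space. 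To make this step work you would have to exhibit an infinite family of homogeneous quasimorphisms on $H$ whose $G/H$-averages remain linearly independent. For the $\ZZ/2$-passage at $P_2(M)\supset F_2\rtimes 2\ZZ$ this is plausible with Brooks quasimorphisms $\mu_w$ for words $w$ chosen so that $\{w,\phi(w)\}$ are pairwise distinct, but it still needs to be argued; for the $S_n$-passage at $B_n(M)\supset P_n(M)$ the combinatorics of the averaging over all of $\fS_n$ is substantially harder to control, and you offer no argument. The paper's route via Bestvina--Fujiwara avoids this entirely, since their theorem applies to $B_n(M)$ and $P_n(M)$ directly once both are shown to be non-virtually-abelian subgroups of a mapping class group.

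Second, the key computation $\phi^2=\id$ in $\Aut(F_2)$ is asserted but not proved, and you yourself flag it as the main obstacle. The monodromy of the split extension $1\to F_2\to P_2(M)\to\ZZ\to 1$ is a point-pushing map along the one-sided core of $M$ and is only well-defined up to composition with inner automorphisms (depending on the choice of section) and up to conjugation in $\Aut(F_2)$ (depending on the choice of identification of the fiber with $F_2$). What is intrinsic is the image of $\phi$ in $\Out(F_2)$; you would need to show both that this outer class has order two and that a lift to $\Aut(F_2)$ can be chosen squaring to the identity, which is exactly what is needed to get the index-two subgroup literally isomorphic to $F_2\times\ZZ$. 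Your heuristic for $\phi(\alpha)=\alpha^{-1}$, $\phi(\beta)=\beta\alpha^k$ is suggestive but does not account for the conjugations coming from the path of the pushed point, so a careful computation in the universal cover $\widetilde{M}$ (or in the orientation double cover) is required before the claim can be used.
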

\begin{proof}
First, we observe that for $n\geq 2$, $P_n(M)=\Gamma_{n,1}(\RP^2)$ is not virtually abelian.
The case of $n=2$ is done by  \refprop{basicCase}. Then, the claim is obtained by an induction argument with the Fadell–Neuwirth short exact sequence with $n=1$.
Since $P_n(M)$ is a finite index subgroup of $B_n(M)$, for $n\geq 2$, $B_n(M)$ are also not virtually abelian.

Once we show that  $P_n(M)=\PMod(M,n)$ and  $B_n(M)=\Mod(M,n)$ are embedded in  $\Mod(S,2n)$ for some closed surface $S$, the result follows from Bestvina-Fujiwara \cite[Theorem 12]{BestvinaFujiwara02} and the fact that $P_n(M)$ and $B_n(M)$ are not virtually abelian.
Therefore, it is enough to show the existence of such a surface $S$.

First, we observe that $\PMod(M,n)$ and $\Mod(M,n)$ are well embedded in $\Mod(A,2n)$ by Katayama-Kuno \cite[Lemma 2.7]{KatayamaKuno24},
where $A$ is the orientation double cover which is an annulus. Then, we attach two one-holed tori on the boundary of $A$ to obtain a genus two surface $S$.
By Paris--Rolfsen \cite[Corollary~4.2]{ParisRolfsen00}, we can see that $\Mod(A,2n)$ is also embedded in $\Mod(S,2n)$. Thus, $S$ is a desired surface.
\end{proof}

\subsection{Twist subgroup}
In the last part of \refthm{injectivity}, we essentially use the concept of twist subgroups discussed in \cite{KatayamaKuno24}.
Let $N=N_g^b$ and $P$ a finite subset of $\interior{N}$.
A simple closed curve in $N\setminus P$ is \emph{peripheral} if it is isotoped to a boundary component in $N\setminus P$.
A two-sided simple closed curve in $N\setminus P$ is \emph{generic} if it does not bound neither a disk nor a M\"obius band in $N\setminus P$ and is not peripheral. 
The \emph{twist subgroup} $\cT(N,P)$ is the subgroup of $\Mod(N,P)$,
 generated by Dehn twists along two-sided closed curves which are either peripheral or generic on $N\setminus P$. See, e.g. \cite[2.~Preliminaries]{Stukow}, for the definition of Dehn twists on non-orientable surfaces.

 \begin{prop}[\cite{KatayamaKuno24}]\label{Prop:twistGrp}
 $\cT(N,P)$ is a finite index subgroup of $\Mod(N,P)$.
 \end{prop}

\section{The dimension of $Q(\Diff_\omega(M,\partial M)_0)$}\label{Sec:dimension}
In this section, we show one of our main theorem, \refthm{inftyDim}. 
The strategy is as follows: 
we first construct some homomorphism $\cG:Q(B_2(M)) \to Q(\Diff_\omega(M,\partial M)_0)$ following Gambaudo-Ghys \cite{GambaudoGhys04} and show that it is well-defined (\refthm{wellDefineness}). 
Then, we show the injectivity of $\cG$ (\refthm{injectivity}).
Finally, \refthm{inftyDim} follows from \reflem{inftyDimBraid} and \refthm{injectivity}.

\subsection{Gambaudo-Ghys type cocycles}\label{Sec:GGcocycle}
Given $g\in \Diff_\omega(M,\partial M)_0$ and given $z\in X_n(M)$, we define the correspoding pure braid $\gamma(g;z)$, following a similar strategy in  \cite[Section~1.1]{Brandenbursky15}. Since $M$ is not contractible, we need to be careful unlike in the case of $D$, to achieve the cocycle condition 
$$\gamma(gh;z)=\gamma(h;z)\cdot \gamma(g;\bar{h}(z))$$
 where $\bar{h}$ is the diagonal action of $h$ in $X_n(M)$. To do this, we choose a ``branch cut" in $M$ as in \cite[Section~2.B.]{BrandenburskyMarcinkowski19}. Let $\ell$ be the line $\pi(1/2\times I)$ and set $\hat{M}=M\setminus \ell$. Then $\hat{M}$
is an embedded disk (with two subarcs of the boundary removed) in $M$ with full measure. Then, any pair of points, $x,y$ in $\hat{M}$, is joined by a unique geodesic path $s_{xy}:[0,1]\to \hat{M}$ from $x$ to $y$ under the canonical Euclidean metric induced from $\widetilde{M}$. 

Fix $n\in \NN$ and a base point $\bar{z}\in X_n(\hat{M})$.
Then, we denote by $\Omega^{2n}$ the set of all points $z$ in $X_n(\hat{M})$ such that $(s_{\bar{z}_iz_i}(t))_{i=1,2,\cdots,n}\in X_n(M)$ for all $t\in [0,1]$.
Since $X_n(\hat{M})$ is an open, dense subset of $X_n(M)$, by a similar argument in \cite[Section~3.2.]{GambaudoPecou99}, we can see that $\Omega^{2n}$ is an open, dense subset of $X_n(M)$ and also that $\Omega^{2n}$ has full measure in $X_n(M)$. 

We are now ready to define the cocycle mentioned above.
For each $g\in \Diff_\omega(M,\partial M)_0$, we define a pure braid $\gamma(g;z)$ in $P_n(M)$, for $z\in \Omega^{2n}$ with $\bar{g}(z)\in \Omega^{2n}$,
as the concatenation of the following three paths in $X_n(M)$;
\begin{itemize}
    \item $t\in [0,1/3]\mapsto (s_{\bar{z}_iz_i}(3t))_{i=1,2,\cdots,n}\in X_n(M)$;
    \item $t\in [1/3,2/3]\mapsto (g_{3t-1}(z_i))_{i=1,2,\cdots,n}\in X_n(M)$;
    \item $t\in [2/3,1]\mapsto (s_{g(z_i)\bar{z}_i}(3t-2))_{i=1,2,\cdots,n} \in X_n(M)$.
\end{itemize}
for some isotopy $g_t$ from $id_M$ to $g$.
\begin{rmk}
By \refthm{contractible}, $\Diff_\omega(M,\partial M)_0$  is simply connected and $\gamma(g,z)$ does not depend on the isotopy $g_t$. Also, observe that for each $g\in \Diff_\omega(M,\partial M)_0$, the set of points $z$ where $\gamma(g;z)$ is well-defined has full measure in $X_n(M)$.
\end{rmk}

Following \cite{GambaudoGhys04}, \cite{Ishida14} and \cite{Brandenbursky15}, we construct a homogeneous quasimorphism of $\Diff_\omega(M,\partial M)_0$ from a  homogeneous quasimorphism of  $B_2(M)$. 
Let $\varphi:B_2(M)\to \RR$ be a homogeneous quasimorphism of  $B_2(M)$.
We define a function $\cG^\circ(\varphi):\Diff_\omega(M,\partial M)_0 \to \RR$ as 
\[
\cG^\circ(\varphi)(f)=\int_{X_2(M)}\varphi(\gamma(f;z))dz
\]
and 
a function $\cG(\varphi):\Diff_\omega(M,\partial M)_0 \to \RR$ as
\[
\cG(\varphi)(f)=\lim_{p\to +\infty}\frac{\cG^\circ(\varphi)(f^p)}{p},
\] 
which is the homogenization of $\cG^\circ(\varphi)$.

Once we show that $\cG$ is a well-defined injective homomorphism from $Q(B_2(M))$ to $Q(\Diff_\omega(M,\partial M)_0)$, the infinite-dimensionality of $Q(\Diff_\omega(M,\partial M)_0)$ follows from \reflem{inftyDimBraid}.
To do this, we show that for any $f \in \Diff_\omega(M,\partial M)_0$, the function $\varphi(\gamma(f;\cdot)):X_2(M)\to \RR$, $z\mapsto \varphi(\gamma(f;z))$, is bounded, using a compactification of $X_2(M)$. 

\subsection{Well-definenss of $\cG$}

To show that $\cG$ and $\cG^\circ$ are well-defined, we make use of the following estimation of the word length of the cocycle $\gamma$. 
We postpone proving \reflem{finiteness} in \refsec{wordLen} since the proof requires a compactification technique for $X_2(M)$, which is natural, but technical.

\begin{lem}\label{Lem:finiteness}
    If $f \in \Diff_\omega(M,\partial M)_0$ and $S$ is a finite generating set of $\pi_1(X_2(M),\bar{z})$ where $\bar z \in X_2(\hat{M})$, then there is a constant $K(f, S)$ such that 
$$\ell_S(\gamma(f;z))\leq K(f, S)$$
for almost every $z$ in $\Omega^4$.
\end{lem}

Now, we show that $\cG$ and $\cG^\circ$ are well-defined.
\begin{thm}\label{Thm:wellDefineness}
    Let $\varphi$ be a homogeneous quasimorphism of  $B_2(M)$. The functions $\cG^\circ(\varphi)$ and $\cG(\varphi)$ are well-defined quasimorphisms. In particular, $\cG(\varphi)$ is homogeneous.
\end{thm}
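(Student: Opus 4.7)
The strategy is to first show that $\cG^\circ(\varphi)$ is a well-defined quasimorphism on $\Diff_\omega(M,\partial M)_0$, and then invoke the standard homogenization procedure to obtain the homogeneous quasimorphism $\cG(\varphi)$. For well-definedness of the integral defining $\cG^\circ(\varphi)(f)$, first observe that by the remark in \refsec{GGcocycle} the integrand $z \mapsto \varphi(\gamma(f;z))$ is defined on a full-measure subset of $X_2(M)$. To verify integrability, I would fix a finite generating set $S$ of the finitely generated group $P_2(M)$; by \reflem{finiteness} there is a constant $K(f,S)$ with $\ell_S(\gamma(f;z)) \leq K(f,S)$ for almost every $z$. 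Since any homogeneous quasimorphism on a finitely generated group grows at most linearly in word length with respect to a finite generating set, $|\varphi(\gamma(f;z))|$ is uniformly bounded almost everywhere. Because $X_2(M)$ has finite total mass with respect to the product density, the integral $\cG^\circ(\varphi)(f)$ is finite.

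Next I would establish the quasimorphism defect of $\cG^\circ(\varphi)$. For $f,g \in \Diff_\omega(M,\partial M)_0$, the concatenated-path definition of $\gamma$, together with \refthm{contractible} (which guarantees that $\Diff_\omega(M,\partial M)_0$ is simply connected, so $\gamma$ does not depend on the choice of isotopy), yields the pointwise cocycle identity
\[
\gamma(fg;z) \,=\, \gamma(g;z) \cdot \gamma(f;\bar{g}(z))
\]
in $P_2(M)$ for almost every $z$, since the two geodesic auxiliary segments between $\bar{z}$ and $\bar{g}(z)$ appearing in the middle of the concatenation cancel up to homotopy in $X_2(M)$. Applying the defect inequality for $\varphi$ pointwise and integrating produces
\[
\left| \cG^\circ(\varphi)(fg) - \cG^\circ(\varphi)(g) - \int_{X_2(M)} \varphi(\gamma(f;\bar{g}(z)))\, dz \right| \,\leq\, D_\varphi \cdot |X_2(M)|,
\]
where $D_\varphi$ denotes the defect of $\varphi$ and $|X_2(M)|$ the total mass. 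Since $g$ preserves $\omega$, the diagonal map $\bar{g}$ preserves the induced measure, so the change of variable $w = \bar{g}(z)$ converts the remaining integral into $\cG^\circ(\varphi)(f)$, and we conclude
\[
|\cG^\circ(\varphi)(fg) - \cG^\circ(\varphi)(f) - \cG^\circ(\varphi)(g)| \,\leq\, D_\varphi \cdot |X_2(M)|,
\]
exhibiting $\cG^\circ(\varphi)$ as a quasimorphism.

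Finally, $\cG(\varphi)$ is by definition the homogenization of $\cG^\circ(\varphi)$, so the general construction recalled in the Preliminary section shows that the limit defining $\cG(\varphi)(f)$ exists for all $f$, that $\cG(\varphi)$ is a homogeneous quasimorphism, and that it differs from $\cG^\circ(\varphi)$ by a bounded function. The main obstacle I anticipate is the precise justification of the cocycle identity almost everywhere: one must verify both that $\bar{g}$ preserves a full-measure subset of the set where all three terms $\gamma(fg;\cdot)$, $\gamma(g;\cdot)$, and $\gamma(f;\bar{g}(\cdot))$ are simultaneously defined, and that the inserted geodesic excursion $\bar{g}(z) \to \bar{z} \to \bar{g}(z)$ between the two concatenated factors really is contractible rel endpoints in $X_2(M)$. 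The branch cut $\hat{M}$ introduced in \refsec{GGcocycle} together with the simple connectedness from \refthm{contractible} are precisely the tools that make this cancellation unambiguous.
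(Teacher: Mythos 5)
Your proposal is essentially the same approach as the paper's: both use \reflem{finiteness} to bound $\ell_S(\gamma(f;z))$ almost everywhere, both conclude that $z\mapsto\varphi(\gamma(f;z))$ is essentially bounded (you via linear growth of quasimorphisms in word length, the paper via finiteness of the set of braids of word length $\leq K$, which are equivalent observations), and both then appeal to homogenization for $\cG(\varphi)$. Where you diverge slightly: the paper dispenses with the quasimorphism defect bound as a "standard computation," whereas you actually carry it out via the cocycle identity $\gamma(fg;z)=\gamma(g;z)\cdot\gamma(f;\bar g(z))$, the pointwise defect inequality, and measure invariance of $\bar g$ — this is a genuine improvement in explicitness. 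Conversely, the paper is more careful than you on the measurability side, noting that $\sigma(z)=\gamma(f;z)$ is locally constant on the components of $g^{-1}(\Omega^4)\cap\Omega^4$ and hence $\varphi\circ\sigma$ is measurable; you jump from essential boundedness to integrability without verifying measurability, which you should add. Otherwise the argument is correct and your closing worries (full-measure preservation under $\bar g$, contractibility of the inserted geodesic round-trip in $X_2(M)$) are exactly the points the branch-cut construction and \refthm{contractible} are designed to resolve.
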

\begin{proof}
    Let $f$ be a diffeomorphism  in $\Diff_\omega(M,\partial M)_0$.
    We claim that the integration $\cG^\circ(\varphi)(f)$ produces a well-defined real value.
    Choose a finite generating set $S$ of $P_2(M)$. 
    By \reflem{finiteness}, there is a constant $K=K(f,S)$ such that $$\ell_S(\gamma(f;z))\leq K$$
    for almost every $z$ in $\Omega^4$. 

    Now, we consider a function $\sigma:g^{-1}(\Omega^4)\cap \Omega^4 \to P_2(M)$ defined by $\sigma(z)=\gamma(\varphi;z)$.
    We show that  $\varphi\circ \sigma$ is measurable and its integration is finite. 
    Recall that $\Omega^4$ is  an open, dense, contractible subset of $X_2(M)$ which has full measure, and so is $g^{-1}(\Omega^4)$. Hence, $\sigma$ is continuous on each component of $g^{-1}(\Omega^4)\cap \Omega^4$, namely, $\sigma$ is continuous at almost every $z$.
    Then, since \[\{\varphi(g):g\in P_2(M) \text{ and }\ell_S(g)\leq K\}\]
    is a finite subset of $\RR$,
     $\varphi\circ \sigma:X_2(M)\to \RR$ is an essentially bounded function.
    Here, the value of $\varphi\circ \sigma$ at a point in the complement of $g^{-1}(\Omega^4)\cap \Omega^4$ is assigned arbitrarily. Since  $g^{-1}(\Omega^4)\cap \Omega^4$ has full measure, the assignment is not significant.
    Therefore, we can see that $\varphi\circ \sigma$ is measurable and the integration is finite.

    The remaining part is to show that $\cG^\circ(\varphi)$ and $\cG(\varphi)$ satisfy the quasimorphism condition and $\cG(\varphi)$ is homogeneous.
    This part can be done by standard computations, using the fact that $\varphi$ is a homogeneous quasimorphism.
\end{proof}

\subsection{$\xi$-supported Dehn twists and slidings}

One way to prove \refthm{injectivity} is to show that for any non-trivial quasimorphism $\varphi$ in $Q(B_2(M))$, $\cG(\varphi)$ is non-trivial, that is, $\cG(\varphi)(g)\neq 0$ for some $g\in \Diff_\omega(M,\partial M)$.
To construct such a $g$, we first choose a pure braid $\beta\in P_2(M)$ such that $\varphi(\beta)\neq 0$. 
Then, we construct an element $g\in \Diff_\omega(M,\partial M)$ such that for any $z$ in some region $D$ of $X_2(M)$ with large area, $\gamma(g;z)$ is conjugate to $\beta$ in $B_2(M)$ and so $\cG(\varphi)(g)\neq 0$.

One necessary property for showing that  $\cG(\varphi)(g)\neq 0$ is that the $\varphi$-values of $\gamma(g;z), z\nin D$ have a small contribution to the value $\cG(\varphi)(g)$.
To find such an element $g$, from now on, we introduce a specific construction of a sequence $\{\tau_i\}_{i\in \NN}$ of density-preserving representatives of the Dehn twist in $\Mod(M, \{\bar{z}_1,\bar{z}_2\})=B_2(M)$ along a given two-sided curve.

After that, given a finite c-generating set $S$ of $B_2(M)$, we show that there is a $K>0$ such that
\[q_S(\gamma(\tau_i;z))\leq K\] 
for almost every $z\in X_2(M)$ and any $i\in\NN$,
where $q_S$ is the norm c-generated by $S$.
This implies the desired property by \refprop{qValue}.
Unfortunately, this is not directly implied by \reflem{finiteness}. 
In the end, we prove \reflem{canonicalDehnTwist}.

We first introduce sliding isotopies on a M\"obius band and a disk as toy models for the desired Dehn twists and their associated isotopies.
For any positive numbers $w, d$ with $d<w$, we say that a smooth function $f:[0,w]\to \RR$ is a \emph{$(w,d)$-step function} if  
\begin{itemize}
    \item $f=1$ on $[0,w-d]$;
    \item $f$ is strictly decreasing on $[w-d, w-d/2]$;
    \item $f$ vanishes on $[w-d/2,w]$.  
\end{itemize}
Likewise, we say that a smooth function $f:[-w,w]\to \RR$ is a \emph{$(w,d)$-bump function} if the restriction of $f$ onto $[0,w]$ is a $(w,d)$-step function and $f$ is an even function.

\begin{const}[Sliding isotopy]\label{Const:sliding}
    Let $a, d$ be positive numbers such that $a<1$ and  $d<a/4$.     
    Set $S_a$ as a closed M\"obius band $M_{a}$ with width $a$ or a closed Euclidean disk with radius $\sqrt{a/\pi}$. 
    Note that the area of $S_a$ is $a$. We construct a \emph{$(a,d)$-sliding isotopy $\chi_t$} on $S_a$ as follows.
    
    If $S_a$ is $M_{a}$, then we take a $(a/2,d)$-bump function $\frak{b}:[-a/2,a/2]\to \RR$.
    Define an isotopy $\widetilde{\chi}_t:\RR\times [-a/2,a/2]\to \RR\times [-a/2,a/2], \ t\in [0,1]$ on the universal cover of $M_{a}$ as 
    $\widetilde{\chi}_t(x,y)=(x+t\frak{b}(y), y), \ t\in [0,1]$.
    Then, the isotopy $\widetilde{\chi}_t$ induces an isotopy $\chi_t$ on $S_a$ such that  $\chi_t=\pi\circ \widetilde{\chi}_t$.

    If $S_a$ is a disk with radius $\sqrt{a/\pi}$, we take a $(\sqrt{a/\pi},d)$-step function $\frak{s}:[0,\sqrt{a/\pi}]\to \RR$. Then, we define an isotopy $\chi_t: S_a\to S_a, \ t\in [0,1]$ as $\chi_t(r, \theta)=(r,\theta+2\pi t\frak{s}(r)), \ t\in [0,1]$ under the standard polar coordinate $(r,\theta)$.
    
    From the construction, we can see that a $(a,d)$-sliding isotopy $\chi_t$ on $S_a$ is an isotopy of density-preserving diffeomorphisms on $S_a$, fixing a $d/2$-neighborhood of the boundary $\partial S_a$.        
\end{const}

\begin{figure}[ht]
        \centering
\includegraphics[width=0.5\linewidth]{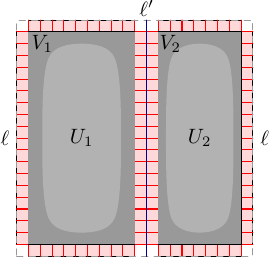}
        \caption{A decomposition into foliated strips and junctions. The red rectangles are strips, foliated by the red geodesic arcs. The white rectangles are junctions which are adjacent to exactly three strips.}
        \label{Fig:decomposition}
\end{figure}
Then, to specify a representative of the curve for a Dehn twist, we set a decomposition of $M$ as follows.
\begin{conv}\label{Conv:dividingLine}
    From now on, we think of $\hat{M}$ as the set $(-1/2,1/2)\times I$, which is a component of $\pi^{-1}(\hat{M})$.
    Given two positive numbers $a_1$, $a_2$  with $a_1+a_2\leq 1$, we set \[a_1^*=\frac{a_1}{a_1+a_2} \text{ and }a_2^*=\frac{a_2}{a_1+a_2}.\]
    Then, we denote by $\ell(a_1,a_2)$ the vertical line $\{-1/2+a_1^*\}\times I$ in $\hat{M}$.
    For instance, in \reffig{decomposition}, $\ell'$ divides $\hat{M}$ into two subrectangles.
    When $\ell'=\ell(a_1,a_2)$, the areas of left and right rectangles are $a_1^*$ and $a_2^*$, respectively.
    Moreover, for any $\epsilon$ with $0<\epsilon<a_i^*/2, \ i=1,2,$ 
    we also set 
    \[
    V_1(a_1,a_2,\epsilon)=[-\frac{1}{2}+\epsilon,-\frac{1}{2}+a_1^*-\epsilon]\times[-\frac{1}{2}+\epsilon,\frac{1}{2}-\epsilon] \text{ and } V_2(a_1,a_2,\epsilon)=[-\frac{1}{2}+a_1^*+\epsilon,\frac{1}{2}-\epsilon]\times[-\frac{1}{2}+\epsilon,\frac{1}{2}-\epsilon].
    \]
    Note that $M\setminus \big( V_1(a_1,a_2,\epsilon)\cup V_2(a_1,a_2,\epsilon) \big)$ is the $\epsilon$-neighborhood of $\partial M \cup \ell \cup \ell(a_1,a_2)$. 
    See \reffig{decomposition}.
    We write $N(a_1,a_2,\epsilon)=M\setminus \big( V_1(a_1,a_2,\epsilon)\cup V_2(a_1,a_2,\epsilon) \big)$. 
    
    Now, we decompose $N(a_1,a_2,\epsilon)$ into ten subrectangles as in \reffig{decomposition}: four white (closed) rectangles; six red (open) rectangles.
    In particular, we foliate six red rectangles by horizontal or vertical (geodesic) arcs as in \reffig{decomposition}. We call each red rectangle with such a foliation a \emph{strip} of $N(a_1,a_2,\epsilon)$ and each white rectangle a \emph{junction} of $N(a_1,a_2,\epsilon)$.
    Note that each junction is adjacent to three strips.
\end{conv}
\begin{rmk}
   We follow the notions of strips and junctions as introduced in \cite{BestvinaHandel95}. However, our definitions are slightly different from those in \cite{BestvinaHandel95}.
\end{rmk}

Given a two-sided simple closed curve $\gamma$ in $M$, we choose a `special' representative of $\gamma$ with respect to junctions and foliations of strips to construct desired representatives of the Dehn twist along $\gamma$.
In the following construction, we detail how to choose a representative of $\gamma$.
Recall that in a surface $S$, we say that two embedded $1$-manifolds $\alpha$ and $\beta$ in $S$ \emph{bound a bigon} in a subset $A$ of $S$ if there is an embedded bigon $P$ in $A$ such that 
$P\cap (\alpha\cup \beta)=\partial P$, one side of $P$ is a subarc of $\alpha$ and the other side of $P$ is a subarc of $\beta$.

\begin{const}[Minimal position]\label{Const:minimalCurve}
    Let $a_1, a_2$ be two positive numbers with $a_1+a_2\leq 1$. Choose $\epsilon$ with $0<\epsilon<a_i^*/4,\ i=1,2$. 
    Set \[\ell'=\ell(a_1,a_2), V_1=V_1(a_1,a_2,\epsilon), V_2=V_2(a_1,a_2,\epsilon) \text{ and } N=N(a_1,a_2,\epsilon), \]
    introduced in \refconv{dividingLine}.
    Also, see \reffig{decomposition}.
    Assume that $\bar{z}_i\in \interior(V_i)$ for all $i=1,2$. 
    
    Let $\gamma$ be a two-sided simple closed curve in $M\setminus\{\bar{z}_1,\bar{z}_2\}$ that is either peripheral or generic.
    We say that a representative $c$ of $\gamma$ \emph{is in minimal position with respect to $N$} if it satisfies the following conditions:
    \begin{itemize}
        \item $c$ is a smooth curve in $N$;
        \item $c$ intersects perpendicularly $\ell$ and
        leaves of the foliations of strips of $N$;
        \item  $c$ and $\ell$ are in  minimal position in $N$, that is, 
        $c$ and $\ell$ do not bound a bigon in $N$;
        \item for any side $s$ of a junction $J$ of $N$, $c$ and $s$ do not bound a bigon in $J$ (see \reffig{junction});
        \item(monotone condition) for any junction $J$ of $N$ and any component $d$ of $c \cap \closure{J}$, there is a pair of monotone smooth maps $\delta_1,\delta_2:[0,1]\to \RR$ such that the path $\delta:[0,1]\to M$, defined as $\delta(t)=\pi(\delta_1(t),\delta_2(t))$, is a regular parametrization of $d$ (see \reffig{junction}).
    \end{itemize}
     We call a closed subarc $d$ of $c$ a \emph{branch} of $c$ if $d$ is a component of the intersection of $c$ with a junction or a strip. 
     For a junction or strip $R$ of $N$, $\nn(c,R)$ denotes the number of branches of $c$ contained in $R$. 
 
    We can always find a representative of $\gamma$ in minimal position with respect to $N$.
    To see this, first take a smooth representative of $\gamma$ such that it is contained in $N$ and intersects perpendicularly to $\ell$ and the leaves of the foliations of strips of $N$.   
    By the standard technique of removing a bigon, we can take a desired representative of $\gamma$.

\end{const}
  \begin{figure}[ht]
        \centering
\includegraphics[width=0.5\linewidth]{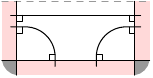}
        \caption{Possible subarcs in a junction.}
        \label{Fig:junction}
\end{figure}

    Note that if $c$ is in minimal position with respect to $N$, then $c$ intersects $\ell$ only at junctions.
    Also, since $c$ is two-sided, and peripheral or generic in $M\setminus\{\bar{z}_1,\bar{z}_2\}$, $c$ bounds either a closed M\"obius band or a closed disk in $M$. 
    We denote by $S_{in}(c)$ the surface bounded by $c$, that is, $S_{in}(c)$ is either a M\"obius band or a disk with $\partial S_{in}(c)=c$.
    Then, the closure of the complement of $S_{in}(c)$ is homeomorphic to a closed annulus or a real projective plane with two open disks removed and we denote it by $S_{out}(c)$.
    Note that $M=S_{in}(c)\cup S_{out}(c)$ and $S_{in}(c)\cap S_{out}(c)=c$.

    Now, given a curve $c$ in minimal position with respect to $N$, we construct a representative of the Dehn twist along $\gamma$ in a specific way.
\begin{const}[$\xi$-supported Dehn twist/sliding isotopy]\label{Const:supportedTwist}
    Let $a_1, a_2$ be two positive numbers with $a_1+a_2\leq 1$. Choose $\epsilon$ with $0<\epsilon<a_i^*/4,\ i=1,2$. 
    Set \[\ell'=\ell(a_1,a_2), V_1=V_1(a_1,a_2,\epsilon), V_2=V_2(a_1,a_2,\epsilon) \text{ and } N=N(a_1,a_2,\epsilon), \]
    introduced in \refconv{dividingLine}.
    Also, see \reffig{decomposition}.
    Assume that $\bar{z}_i\in \interior(V_i)$ for all $i=1,2$.
    
    Let $\gamma$ be a two-sided simple closed curve in $M\setminus\{\bar{z}_1,\bar{z}_2\}$ that is either peripheral or generic. 
    Fix a representative $c$ of $\gamma$ that is in minimal position with respect to $N$ (\refconst{minimalCurve}).
    By the tubular neighborhood theorem, for any sufficiently small number $\xi>0$, the open $\xi$-neighborhood $N_\xi(c)$ of $c$ is an embedded annulus in $N$. Fix such a $\xi>0$.

    Let $\theta:S_a\to S_{in}(c)$ be a density-preserving embedding, and let $\chi_t$ be a $(a,d)$-sliding isotopy given by \refconst{sliding},  where $a$ is the area of $S_{in}(c)$ and $S_a$ is a surface homeomorphic to $S_{in}$ with the same area, following the notation in \refconst{sliding}.
    We say that the pair $(\theta, \chi_t)$ is \emph{$\xi$-supported} if the $\theta$-image of the closed $d$-neighborhood of $\partial S_a$ is contained in $N_\xi(c)$.
    Note that given $\theta$ and $\xi$, for any sufficiently small $d>0$, $(\theta, \chi_t)$ is $\xi$-supported since $\theta$ is smooth.

    Given a $\xi$-supported pair $(\theta, \chi_t)$, we construct an isotopy $\tau_t$ in $\Diff_\omega(M,\partial M)_0$ as follows:
    consider the isotopy $\theta \circ \chi_t \circ \theta^{-1}$.  Since the isotopy is supported in $\interior\big(S_{in}(c)\big)$, we can extend it by the identity on $S_{out}(c)$.
    Then, we obtain an isotopy $\tau_t$ in $M$ such that $\tau_t=\theta \circ \chi_t \circ \theta^{-1}$ on $S_{in}(c)$ and $\tau_t=id$ on $S_{out}(c)$.
    From the construction, $\tau_t\in \Diff_\omega(M,\partial M)_0$.
    We call $\tau_t$ a \emph{$\xi$-supported sliding isotopy associated with $(\theta, \chi_t)$}.
    In particular, $\tau_1$ is called a \emph{$\xi$-supported Dehn twist along $c$}.
    In fact, $\tau_1$ is a density-preserving representative  of a Dehn twist $T_\gamma$ (in $M\setminus \{\bar{z}_1,\bar{z}_2\}$) along $\gamma$ since $\tau_1(\bar{z}_i)=\bar{z}_i, \ i=1,2$.\qedhere
\end{const}

\subsection{Auxiliary braids in $P_2(M)$}\label{Sec:auxBraid}
\begin{figure}
\begin{subfigure}{.4\textwidth}
  \centering
  \includegraphics[width=\linewidth]{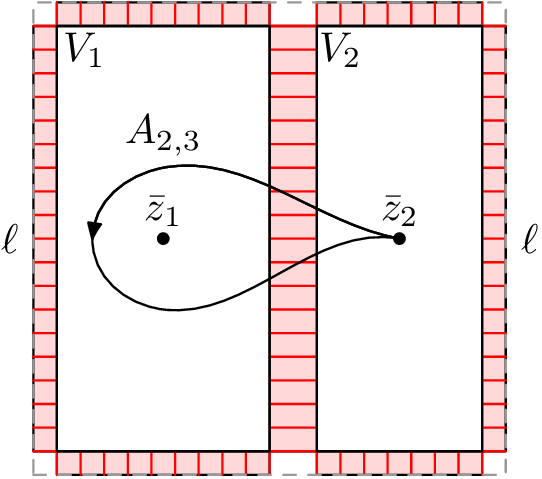}
  \caption{}
  \label{Fig:A23}
\end{subfigure}
\begin{subfigure}{.4\textwidth}
  \centering
  \includegraphics[width=\linewidth]{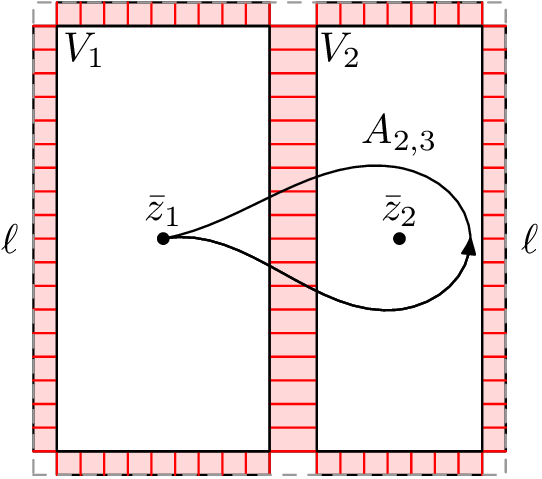}
  \caption{}
  \label{Fig:A23Alt}
\end{subfigure}
\begin{subfigure}{.4\textwidth}
  \centering
  \includegraphics[width=\linewidth]{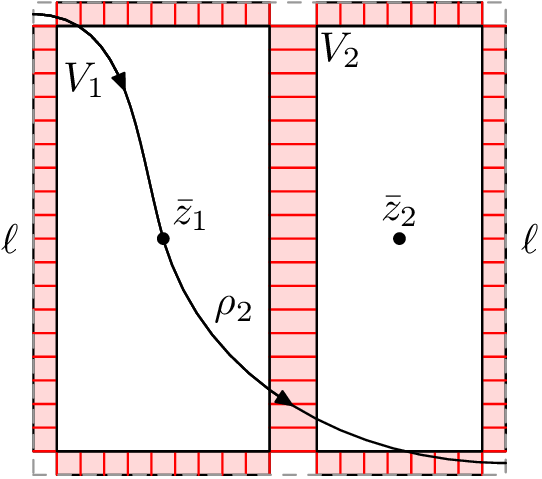}
  \caption{}
  \label{Fig:R2}
\end{subfigure}
\begin{subfigure}{.4\textwidth}
  \centering
  \includegraphics[width=\linewidth]{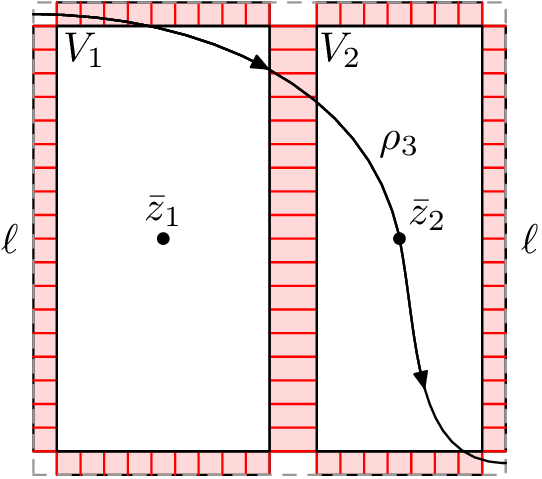}
  \caption{}
  \label{Fig:R3}
\end{subfigure}
\caption{A finite generating set of $P_2(M)$. In each figure, one of $\bar{z}_i$ does not move and the other moves along the indicated path. In particular, the braids of (A) and (B) represent the same braid, denoted by $A_{2,3}$. The braids of (C) and (D) are denoted by $\rho_2$ and $\rho_3$, respectively. See \cite[Proposition~11]{GoncalvesGuaschi17} and compare this with \cite[Figure~1]{GoncalvesGuaschi17}.} 
\label{Fig:genP2}
\end{figure}

In the proof of \reflem{canonicalDehnTwist}, given a $\xi$-supported Dehn twist $\tau$ and a finite c-generating set $S$ of $B_2(M)$, we need to find an upper bound of the norm $q_S(\gamma(\tau,z))$ of the braids $\gamma(\tau,z)$.
To find such a bound, we factorize  the braids $\gamma(\tau,z)$ in some canonical way. 
In this section, we introduce two classes of auxiliary braids which are useful to factorize the braids $\gamma(\tau,z)$.

First, recall that $P_2(M)=\Gamma_{2,1}
(\RP^2)=P_2(\RP\setminus \{x_1\})$ where $x_1$ is corresponding to $\partial M$. 
It is known by \cite[Proposition~11]{GoncalvesGuaschi17} that $P_2(M)$ is finitely presented.
Recall that $(\bar{z}_1,\bar{z}_2)$ is the base point for $P_2(M)$ and each braid in $P_2(M)$ is presented as a pair of trajectories of $\bar{z}_i$. 
\reffig{genP2} describes three generators $A_{2,3}, \rho_2 \text{ and }\rho_3$ of $P_2(M)$, that is, $P_2(M)=\langle A_{2,3}, \rho_2, \rho_3 \rangle$.
For instance, in \reffig{A23}, $\bar{z}_2$ moves around $\bar{z}_1$ counter-clockwise while $\bar{z}_1$ goes nowhere, that is, the trajectory of $\bar{z}_1$ is a constant path.
Observe that \reffig{A23Alt} also represents  $A_{2,3}$.

    \begin{rmk}\label{Rmk:stdGen}
    In fact,  \cite[Proposition~11]{GoncalvesGuaschi17} says that $P_2(M)$ is generated by the five braids,\[A_{1,2}, 
    \ A_{1,3}, \ A_{2,3}, \ \rho_2 \text{ and } \rho_3.\]
    In \cite[Proposition~11]{GoncalvesGuaschi17}, they think of $\Gamma_{2,1}(\RP^2)$ as a subgroup of $P_3(\RP^2)$ with base point $(x_1,x_2,x_3)$, fixing $x_1$. 
    Comparing our convention, $x_2,x_3$ are corresponding to $\bar{z}_1,\bar{z}_2$, respectively.
    By simple computations or \cite[Proposition~11]{GoncalvesGuaschi17}, we can obtain the following relations:
    \[
    A_{1,2}=\rho_2^{-1}A_{2,3}\rho_2^{-1} \text{ and }A_{1,3}=\rho_3^{-2}A_{2,3}^{-1}.
    \]
    These imply that $ A_{2,3}, \ \rho_2 \text{ and } \rho_3$ are enough to generate $P_2(M)$.
\end{rmk}
\begin{figure}[ht]
        \centering
\includegraphics[width=0.4\linewidth]{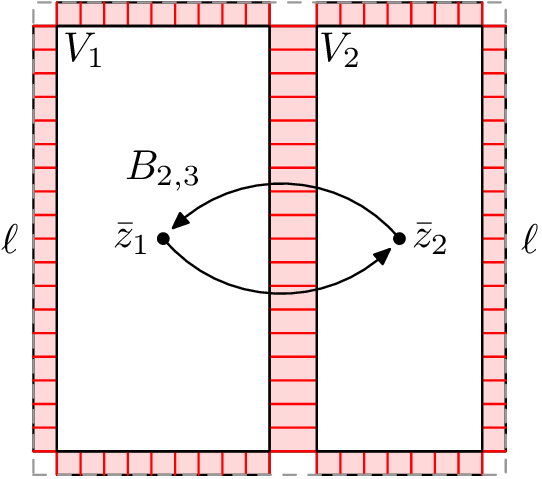}
        \caption{The braid $B_{2,3}$. It exchanges the positions of $\bar{z}_i$, twisting the strands in the counter-clockwise direction. }
        \label{Fig:B23}
\end{figure}

Note that $P_2(M)$ is an index two subgroup of $B_2(M)$.
Hence, we can see easily that $B_2(M)$ is generated by $\{B_{2,3},\rho_2,\rho_3\}$ where $B_{2,3}$ is the braid described in \reffig{B23}.
Obviously, $A_{2,3}=B_{2,3}^2$ and $\rho_2$ is conjugate to $\rho_3$ in $B_2(M)$.

Assume that $\bar{z}_i=\pi(t_i,0)$ with $-1/2< t_1 < t_2<1/2$ and $q$ is a point in $\ell$ with $q=\pi(\pm 1/2, \pm r_q)$ for some $r_q\in I$.
For each $i\in \{1,2\}$, we define geodesic paths $s^\pm_{\bar{z}_iq}:[0,1]\to M$ as
the $\pi$-image of the geodesic paths in $\widetilde{M}$ from $(t_i,0)$ to $(\pm1/2, \pm r_q)$ with respect to the sign. 
Also, we define $s^\pm_{q\bar{z}_i}$ by reversing the orientation of $s^\pm_{\bar{z}_iq}$, respectively, that is,
$s^\pm_{q\bar{z}_i}(t)=s^\pm_{\bar{z}_iq}(1-t),\ t\in [0,1]$.

For almost every $z\in \hat{M}$, we can define a braid $\eta_1(q,z)$ as the braid represented by
\[
\eta_1(q,z)(t)=
\begin{dcases}
(s_{\bar{z}_1 q}^+(2t),s_{\bar{z}_2 z}(2t))&\text{for }t\in[0,1/2]\\
(s_{q\bar{z}_1}^-(2t-1),s_{z\bar{z}_2}(2t-1))&\text{for }t\in[1/2,1].
\end{dcases}
\]
Similarly, a braid $\eta_2(q,z)$ is defined as the braid represented as
\[
\eta_2(q,z)(t)=
\begin{dcases}
(s_{\bar{z}_1z}(2t),s_{\bar{z}_2q}^+(2t))&\text{for }t\in[0,1/2]\\
(s_{z\bar{z}_1}(2t-1),s_{q\bar{z}_2}^-(2t-1))&\text{for }t\in[1/2,1].
\end{dcases}
\] 
By a simple computation, we can obtain the following lemma.
\begin{lem}\label{Lem:auxBraid}
    Assume that $\bar{z}_i=\pi(t_i,0)$ with $-1/2< t_1 < t_2<1/2$. 
   For almost every $(q,z)\in \ell \times \hat{M}$, $\eta_i(q,z)$ are well-defined. Moreover, for each $i\in \{1,2\}$, there are only four possible braids of $\eta_i(q,z)$: $\eta_{i,j},\ j=1,2,3,4$ as described in \reffig{auxBraid}.
   In particular, each $\eta_{i,j}$ is conjugate to $\eta_{1,1}$ or $\eta_{1,2}$ in $B_2(M)$ and for each $i\in\{1,2\}$, $\eta_{i,3}$ and $\eta_{i,4}$ are conjugate to $\eta_{i,1}$ and $\eta_{i,2}$, respectively,  in $P_2(M)$.
\end{lem}
\begin{rmk}\label{Rmk:etaTerm}
We have that 
    $\eta_{1,2}=\rho_2 \text{ and }  \eta_{1,1}=A_{2,3}^{-1}\cdot \rho_2$.
\end{rmk}

\begin{figure}
\begin{subfigure}{.4\textwidth}
  \centering
  \includegraphics[width=\linewidth]{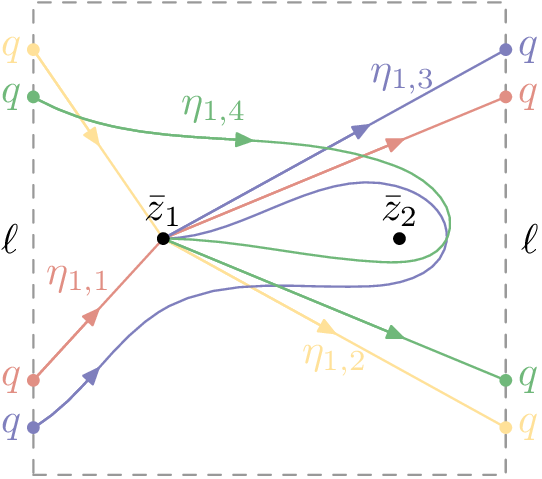}
  \caption{possible braids of $\eta_1(q,z)$}
  \label{Fig:Eta1}
\end{subfigure}
\begin{subfigure}{.4\textwidth}
  \centering
  \includegraphics[width=\linewidth]{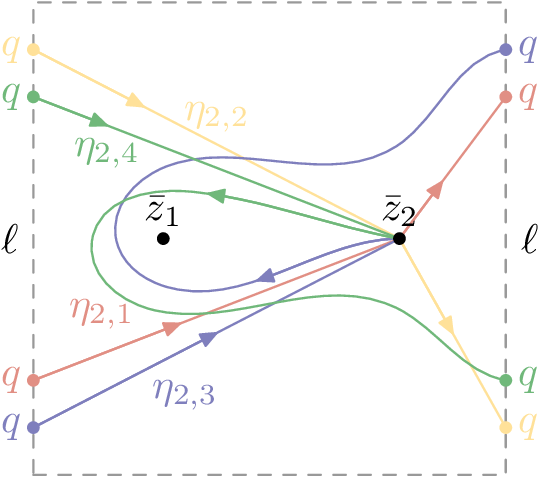}
  \caption{possible braids of $\eta_2(q,z)$}
  \label{Fig:Eta2}
\end{subfigure}

\caption{Each braid $\eta_{i,j}$ is represented as a pair of  two trajectories: one constant path at $\bar{z}_{3-i}$; one non-trivial trajectory of $\bar{z}_i$.}
\label{Fig:auxBraid}
\end{figure}

Let $r$ be an embedded arc in $M$ that intersects $\ell$ only at a unique end point $e$ of $r$.
Now, we think of $\hat{M}$ as $(-1/2,1/2)\times I$.
Then, $r$ can be uniquely lifted onto $[-1/2,1/2]\times I$ and so there is a unique lifting $\tilde{e}$ of $e$ in the lifting of $r$. 
We say that $r$ \emph{is transverse to the left side (resp. right side) of $\ell$} if $\tilde{e}\in 1/2\times I$ (resp. $\tilde{e}\in -1/2\times I$).
Also, we say that $r$ \emph{is transverse to $\ell$} if it is transverse to the left side or right side of $\ell$.

We call a continuous map $\gamma:[0,1]\to M$ a \emph{regular curve} in $M$ if it is one of the following:
    \begin{itemize}
        \item a constant path in $\hat{M}$;
        \item an embedded path or simple closed curve such that for a sufficiently small $\delta>0$, each of $r_1=\gamma([0,\delta])$ and  $r_2=\gamma([1-\delta,1])$ is either transverse to $\ell$ with $r_i\cap \ell=\{\gamma(i-1)\}$ or does not intersect $\ell$.
\end{itemize}
We call $r_1$ and $r_2$   \emph{starting  and ending arcs} of $\gamma$, respectively, if $\gamma$ is not a constant path.
Note that starting  and ending arcs are not uniquely determined.

\begin{const}[Closing a curve]\label{Const:closingStrand}
    Let $\gamma:[0,1]\to M$ be a regular curve in $M$.
    For each point $q$ in $\hat{M}$, the \emph{closing} $c(\gamma,q)$ of $\gamma$ at $q$ is the curve defined as follows:
    say $w_1=\gamma(0)$ and $w_2=\gamma(1)$.
    If $\gamma$ is a constant path in $\hat{M}$, that is, $w_1=w_2$, then we define 
    \[
    c(\gamma,q)(t)= \begin{dcases}
   s_{q,w_1}(3t) & \text{if $t\in [0,1/3]$},\\
   w_1 & \text{if $t\in [1/3,2/3]$},\\
   s_{w_1,q}(3t-2)& \text{if $t\in [2/3,1]$}.\\
    \end{dcases}
    \]
    In this case, we simply denote $c(\gamma,q)$ by $c(w_1,q)$.
    Otherwise, say that $r_1$ and $r_2$ are starting  and ending arcs of $\gamma$, respectively. Then, we define
    \[
    c(\gamma,q)(t)= \begin{dcases}
   s_{q,w_1}^\circ(3t) & \text{if $t\in [0,1/3]$},\\
   \gamma(3t-1) & \text{if $t\in [1/3,2/3]$},\\
   s_{w_2,q}^\circ(3t-2)& \text{if $t\in [2/3,1]$}.\\
    \end{dcases}
    \]
    where for each $i\in \{1,2\}$,  $s_{q,w_i}^\circ:[0,1]\to M$ is defined as 
    \[
    s_{q,w_i}^\circ=
    \begin{dcases}
   s^-_{q,w_i} & \text{if $r_i$ is transverse to the right side of $\ell$},\\
   s_{q,w_i} & \text{if }w_i\in \hat{M},\\
   s^+_{q,w_i}& \text{if $r_i$ is transverse to the left side of $\ell$}.\\
    \end{dcases}
    \]
    and $s_{w_i,q}^\circ(t)=s_{q,w_i}^\circ(1-t)$ for $t\in [ 0,1]$.
\end{const}

\begin{const}\label{Const:oneOrbitBraid}
    Let $\gamma:[0,1]\to M$ be a regular curve in $M$.
    For almost every $u\in \hat{M}$, we can define $\beta_1(\gamma,u)$ as the braid represented as 
    \[
    \beta_1(\gamma,u)(t)=(c(\gamma,\bar{z}_1)(t),c(u,\bar{z}_{2})(t)), \ t\in [0,1].
    \]
    Likewise, we can define $\beta_2(\gamma,u)$ as the braid represented as 
    \[
    \beta_2(\gamma,u)(t)=(c(u,\bar{z}_{1})(t),c(\gamma,\bar{z}_2)(t)), \ t\in [0,1]
    \]
    for almost every $u\in \hat{M}$.
\end{const}
\begin{rmk}\label{Rmk:diskBraid}
    When $\gamma((0,1))\subset \hat{M}$, we can think of $\beta_i(\gamma,u)$ as pure braids with two strands in the disk $\hat{M}$, namely, $\beta_i(\gamma,u)=A_{2,3}^n$ for some $n\in \ZZ$. See \reffig{A23} and \reffig{A23Alt}.
\end{rmk}

\begin{lem}\label{Lem:moveOneStrand}
    Let $\gamma_1,\gamma_2:[0,1]\to M$ be simple closed curves with $\gamma_i(0)=\gamma_i(1)\in \hat{M}$ and $u$ a point in $\hat{M}$.
    Assume that there is a free homotopy $H:[0,1]\times [0,1] \to M$ satisfying the following:
    \begin{itemize}
        \item $H(0,t)=\gamma_1(t)$ and $H(1,t)=\gamma_2(t)$;
        \item $H(s,0)=H(s,1)$ for all $s\in [0,1]$;
        \item $u\notin H([0,1]\times [0,1])$.
    \end{itemize}
     If $\beta_i(\gamma_j,u)$ are well-defined for some $i\in\{1,2\}$, then $\beta_i(\gamma_1,u)$ is conjugate to $\beta_i(\gamma_2,u)$ in $P_2(M)$.
\end{lem}
\begin{proof}
    We just provide the proof of the case of $i=1$ since the case of $i=2$ can be shown in the exactly same way. 
    Assume that $i=1$. Say that $\alpha:[0,1]\to M$ is the path given by $\alpha(s)=H(s,0)$ and $\gamma_i(0)=v_i$. 
    Note that $\alpha$ is a path in $M\setminus\{u\}$, joining $v_1$ with $v_2$ by the third condition of $H$.

    First, observe that $\beta_1(\gamma_1,u)$ can be represented as 
    \[
    c_1(t)=
    \begin{dcases}
   (s_{\bar{z}_1,v_1}(3t), s_{\bar{z}_2,u}(3t)) & \text{if $t\in [0,1/3]$},\\
    (\tilde{\gamma}_1(3t-1),u) & \text{if $t\in [1/3,2/3]$},\\
    (s_{v_1,\bar{z}_1}(3t-2), s_{u,\bar{z}_2}(3t-2))& \text{if $t\in [2/3,1]$},\\
    \end{dcases}
    \]
    where 
    \[
\tilde{\gamma}_1(t)=\begin{dcases}
   \alpha(3t) & \text{if $t\in [0,1/3]$},\\
   \gamma_2(3t-1) & \text{if $t\in [1/3,2/3]$},\\
   \alpha(-3t+3)& \text{if $t\in [2/3,1]$}.\\
    \end{dcases}
    \]
On the other hands,  $\beta_1(\gamma_2,u)$ can be represented as 
    \[
    c_2(t)=
    \begin{dcases}
   (s_{\bar{z}_1,v_2}(3t), s_{\bar{z}_2,u}(3t)) & \text{if $t\in [0,1/3]$},\\
    (\tilde{\gamma}_2(3t-1),u) & \text{if $t\in [1/3,2/3]$},\\
    (s_{v_2,\bar{z}_1}(3t-2), s_{u,\bar{z}_2}(3t-2))& \text{if $t\in [2/3,1]$},\\
    \end{dcases}
    \]
    where 
    \[
\tilde{\gamma}_2(t)=\begin{dcases}
   v_2 & \text{if $t\in [0,1/3]$},\\
   \gamma_2(3t-1) & \text{if $t\in [1/3,2/3]$},\\
   v_2& \text{if $t\in [2/3,1]$}.\\
    \end{dcases}
    \]
Now, we define $\delta$ as the pure braid represented as   
\[
d(t)=\begin{dcases}
   (s_{\bar{z}_1,v_2}(4t), s_{\bar{z}_2,u}(4t)) & \text{if $t\in [0,1/4]$},\\
   (v_2, u) & \text{if $t\in [1/4,1/2]$},\\
    (\alpha(-4t+3),u) & \text{if $t\in [1/2,3/4]$},\\
    (s_{v_1,\bar{z}_1}(4t-3), s_{u,\bar{z}_2}(4t-3))& \text{if $t\in [3/4,1]$}.
    \end{dcases}
\]
Then, it follows from the above representations $c_i(t),d(t)$
that \[
\beta_1(\gamma_2,u)=\delta  \beta_1(\gamma_1,u) \delta^{-1}.
\]
Thus, we are done.
\end{proof}

In a similar way, we can also prove the following lemma.
\begin{lem}\label{Lem:basePointChange}
    Let $\gamma:[0,1]\to M$ be a simple closed curves with $\gamma(0)=\gamma(1)\in \hat{M}$ and $u,v$ two points in $\hat{M}$.
    Assume that there is a path $\alpha:[0,1]\to M$ such that $\alpha(0)=u$, $\alpha(1)=v$ and $\alpha([0,1])\cap \gamma([0,1])=\emptyset$.
    If $\beta_i(\gamma,u)$ and $\beta_i(\gamma,v)$ are well-defined for some $i\in \{1,2\}$, then $\beta_i(\gamma,u)$ is conjugate to  $\beta_i(\gamma,v)$ in $P_2(M)$. 
\end{lem}

Finally, we end this subsection by showing that $\beta_i(\gamma,u)$ can be factorized in a canonical way.

\begin{lem}\label{Lem:canonicalFactor}
    Let $\gamma:[0,1]\to M$ be an embedded path or closed curve with $\gamma(0),\gamma(1)\in \hat{M}$ and $c=\gamma([0,1])\subset M\setminus\{\bar{z}_1,\bar{z}_2\}$. 
    Assume that $\bar{z}_i=\pi(t_i,0)$ with $-1/2< t_1 < t_2<1/2$ and  that $c$ and $\ell$ are transverse.  
    If $\fN=|c \cap \ell|\neq 0$, that is, \[
    c\cap \ell=\{\gamma(s_1),\gamma(s_2),\dots, \gamma(s_\fN)\}
    \text{ and } 
    0=s_0<s_1<s_2<\cdots<s_\fN<s_{\fN+1}=1, 
    \]
 then, for almost every $v\in \hat{M}$, $\beta_i(\gamma,v)$ are well-defined  and 
    \[
    \beta_i(\gamma,v)= \beta_i(\gamma_0,v)\cdot \eta_i(\gamma(s_0),v)^{\epsilon_0}\cdot \beta_i(\gamma_1,v)\cdot\eta_i(\gamma(s_1),v)^{\epsilon_1}\cdot \ldots \cdot  \eta_i(\gamma(s_\fN),v)^{\epsilon_\fN}\cdot \beta_i(\gamma_\fN,v)
    \]
    for some $\epsilon_i\in\{\pm1\}$
    where  each $\gamma_i:[0,1]\to M$ is a reparametrization of $\gamma|_{[s_i,s_{i+1}]}$, preserving the orientation.
\end{lem}
\begin{proof}
    This immediately follows from the constructions of $\eta_i$ and $\beta_i$.
\end{proof}
\begin{rmk}\label{Rmk:eachFactor}
    Note that by \refrmk{diskBraid}, 
    $\beta_i(\gamma_j, v)=A_{2,3}^{n_{i,j}}$ for some $n_{i,j}\in \ZZ$ and by \reflem{auxBraid}, $\eta_i(\gamma(s_j),v)$ are conjugate to $\eta_{1,1}$ or $\eta_{1,2}$ in $B_2(M)$.
\end{rmk}

\subsection{Conjugation-generated norms for the braids associated with $\xi$-supported Dehn twists}

Now, given a two-sided simple closed curve $\kappa$ and a finite c-generating set $S$ of $B_2(M)$, we construct a sequence $\{\tau_i\}_{i\in \NN}$ of $\xi_i$-supported Dehn twists along its representatives of $\kappa$ such that $q_S(\gamma(\tau_i,z))$ are bounded by some uniform constant.
For the convenience, we denote the \emph{central curve} $\pi(\RR\times 0)$ of $M$ by $\gamma_0$. 

\begin{lem}\label{Lem:canonicalDehnTwist}
    Let $a_1,a_2,\epsilon$ be positive numbers such that $a_1+a_2\leq 1$ and $0<\epsilon<a_i^*/4$ for all $i=1,2$. 
    Assume that $\bar{z}_i\in \interior(V_i(a_1,a_2,\epsilon)) \cap \gamma_0$ for all $i=1,2$. If $S$ is a finite  c-generating set of $P_2(M)$ and $\kappa$ is a two-sided simple closed curve in $M\setminus\{\bar{z}_1,\bar{z}_2\}$ that is either peripheral or generic, then there are a constant $K$ and a sequence $\{(\epsilon_n, k_n, \xi_n, \tau_n)\}_{n\in \NN}$ satisfying the following:
        \begin{itemize}
            \item $\{\epsilon_n\}_{n\in\NN}$ is a strictly decreasing sequence of positive numbers such that $\epsilon_1<\epsilon$ and $\epsilon_n \to 0$ as $n\to \infty$;
            \item $\{k_n\}_{n\in\NN}$ is a sequence of representatives of $\kappa$ such that for each $n\in \NN$, $k_n$ is in minimal position with respect to $N(a_1,a_2,\epsilon_n)$;
            \item $\{\xi_n\}_{n\in\NN}$ is a sequence of positive numbers such that for each $n\in \NN$, $N_{\xi_n}(k_n)$ is a tubular neighborhood of $k_n$ in $N(a_1,a_2,\epsilon_n)$;
            \item for each $n\in\NN$, $\tau_n$ is a $\xi_n$-supported Dehn twist along $k_n$;
            \item the following inequality holds:
            for any $n\in \NN$,
            \[q_S(\gamma(\tau_n^{\pm1};z))\leq K\]
            for almost every $z\in \Omega^4$,
            where  $q_S$ is the norm c-generated by $S$.
        \end{itemize}
\end{lem}
\begin{proof}
Set $\epsilon_0=\epsilon$ and  $\epsilon_i=\epsilon/2^i$ for $i\in \NN$. 
For each $i\in \ZZ_{\geq0}$, we write   
\[\ell'=\ell(a_1,a_2), V_1^i=V_1(a_1,a_2,\epsilon_i), V_2^i=V_2(a_1,a_2,\epsilon_i) \text{ and } N^i=N(a_1,a_2,\epsilon_i).\] 
For each $i\in \ZZ_{\geq0}$, $M$ has a cell decomposition $\cD_i$ induced by junctions and strips of $N^i$ (\refconst{minimalCurve}) and $V_j^i,\ j=1,2$.

We first observe that there is a smooth diffeomorphism $L:M\to M$, satisfying the following:
\begin{itemize}
    \item $L$ preserves $\ell$ and $\ell'$,
    \item $L$ is a cellular map from $\cD_i$ to $\cD_{i+1}$,
    \item $L(V_j^i)=V_j^{i+1},\ j=1,2$, and 
    \item $L$ maps each leaf of strips of $N^i$ to a leaf of strips of $N^{i+1}$.
\end{itemize}
Note that such a $L$ maps strips and junctions of $N^i$ to strips and junctions of $N^{i+1}$, respectively.  

To construct such a $L$, we take  $\varphi_x$ in $\Diff_+^\infty(I)$, satisfying 
\begin{itemize}
    \item $p_x=-1/2+a_1^*$ is a unique attracting fixed point of $\varphi_x$ in $\interior(I)$; 
    \item each of $(-1/2,p_x)$ or $(p_x,1/2)$ contains exactly one fixed point and these fixed points are repelling;
    \item $\varphi_x$ maps linearly \[[-1/2,-1/2+\epsilon_0], [p_x-\epsilon_0,p_x+\epsilon_0] \text{ and }  [1/2-\epsilon_0,1/2]
\] to
\[[-1/2,-1/2+\epsilon_1], [p_x-\epsilon_1,p_x+\epsilon_1] \text{ and }  [1/2-\epsilon_1,1/2],
\] repsectively;
\item $\varphi_x$ is also linear in small neighborhoods of each of \[-1/2+\epsilon_0, \ p_x-\epsilon_0,\ p_x+\epsilon_0 \text{ and } \ 1/2-\epsilon_0.\]
\end{itemize}
Likewise, we take  $\varphi_y$ in $\Diff_+^\infty(I)$ satisfying 
\begin{itemize}
    \item there is the unique fixed point $p_y$ of $\varphi_y$ in $\interior(I)$;
    \item $p_y\in  (-1/2+\epsilon_0,1/2-\epsilon_0)$ and it is  
    repelling;
    \item $\varphi_y$ is odd;
    \item $\varphi_y$ maps linearly    \[[-1/2,-1/2+\epsilon_0]\text{ and }  [1/2-\epsilon_0,1/2]
\text{ to }[-1/2,-1/2+\epsilon_1]\text{ and }  [1/2-\epsilon_1,1/2],
\] respectively;
\item $\varphi_x$ is also linear in small neighborhoods of each of $-1/2+\epsilon_0 \text{ and } \ 1/2-\epsilon_0$.
\end{itemize}
Note that near $\epsilon_0$-neighborhoods of attracting fixed points,  $\varphi_x$ and $\varphi_y$  are  linear maps with stretch factors $1/2$ since $1/2=\epsilon_1/\epsilon_0$.

Now, we define $L_0:I\times I \to I\times I$ as $(s,t)\mapsto (\varphi_x(s),\varphi_y(t))$. 
Since $\varphi_y$ is odd and in small neighborhoods of $1/2$ and $-1/2$, $\varphi_x$ is  linear maps with stretch factor $1/2$, fixing $\pm1/2$, we can define $L:M\to M$ as $L\circ \pi=\pi\circ L_0$.
Observe that $L$ is a contracting linear map near small open neighborhood of each junction of $N^0$, fixing end points of $\ell$ and $\ell'$ and  satisfying the desired properties.

Then, we fix a smooth representative $k_0$ of $\kappa$ in minimal position with respect to $N^0$ (\refconst{minimalCurve}).
Set $k_{i+1}=L(k_i)$ for all $i\in \ZZ_{\geq0}$. It follows from the construction of $L$ that $k_{i}$ are smooth curves in minimal position with respect to $N^i$.
In particular, the monotone condition of minimal position in \refconst{minimalCurve} is preserved under the iteration of $L$ since $L$ is a contracting linear map in each junction.

From now on, we construct a sequence of supported Dehn twists along $k_i$ satisfying the desired properties.
For each $i\in \ZZ_{\geq0}$, we denote by $a_i$ the area of $S_{in}(k_i)$ and fix a density-preserving embedding $\theta_i:S_{a_i}\to S_{in}(k_i)$.
Also, for each $i\in \ZZ_{\geq0}$, we choose a positive number $\xi_i$ so that the $\xi_i$-neighborhood $N_{\xi_i}(k_i)$ is a tubular neighborhood of $k_i$ in $N^i$, foliated by the geodesic arcs perpendicular to $k_i$.
Say that $\cA_i$ is such a foliation of $N_{\xi_i}(k_i)$.

Fix $i\in \NN$. 
Then, there is $D_i>0$ such that for any $(a_i,d)$-sliding isotopy $\chi_t$ with $d\leq D_i$, given by \refconst{sliding}, $(\theta_i,\chi_t)$ is $\xi_i$-supported. 
Choose $d<D_i$ and take an $(a_i,d)$-sliding isotopy $\chi_t$. Note that   $(\theta,\chi_t)$ is $\xi_i$-supported.
Say that $\sigma_t$ is the $\xi_i$-supported sliding isotopy associated with $(\theta_i,\chi_t)$ (\refconst{sliding}).
 
Recall that $R=\{B_{2,3},\rho_2,\rho_3\}$ is a finite c-generating set of $B_2(M)$ and $q_R$ is the norm c-generated by $R$. 
To find an upper bound for $q_R(\gamma(\sigma_t,z))$ for almost every $z\in \Omega^4$, we observe that  $\gamma(\sigma_t,z)$ can be written as a finite product of the conjugations of the auxiliary braids, $\eta_i,\beta_i$, introduced in \refsec{auxBraid}.
 
Note that it follows from the construction of $\{\sigma_t\}_{t\in [0,1]}$ that $\sigma_t$ is a part of a unique topological flow $\{\delta_s\}_{s\in\RR}$,
defined as $\delta_s=\sigma_1^{\lfloor s \rfloor}\circ 
\sigma_{s-\lfloor s \rfloor}=\sigma_{s-\lfloor s \rfloor}\circ\sigma_1^{\lfloor s \rfloor}$.
Whenever we mention an orbit of $\sigma_t$, it refers to the associated orbit of $\delta_s$.
Each orbit of the flow $\delta_s$ is either a constant path or an embedded circle.

More precisely, if $S_{a_i}$ is a M\"obius band, then $S_{a_i}$ had a foliation $\cF$ by the circle $\pi(\RR\times y), \ y\in [-a/2,a/2]$.
Each leaf of $\cF$ is oriented 
by the orientation of $\RR\times y$.
Under this orientation, each sliding isotopy rotates each leaf of $\cF$ in the positive direction by at most $2\pi$.
If $S_{a_i}$ is a closed disk, then $S_{a_i}$ has a singular foliation given by the origin and the circles centered at the origin. 
Each leaf of $\cF$ is oriented counterclockwise.
Also, as in the previous case, each sliding isotopy  rotates each leaf of $\cF$ in the positive direction by at most $2\pi$.
Hence, $\sigma_t$ preserves a singular foliation on $S_{in}(k_i)$ induced by $\cF$ and fixes each point in $S_{out}(k_i)$.

It follows from the above observation that $z_1$ and $z_2$ do not lie in the same orbit for almost every $(z_1,z_2)\in \Omega^4$. 
This fact allows us to factorize $\gamma(\sigma_1; z)$ into a product of $\beta_i(\sigma^{u_i},v)$ for $i\in \{ 1,2 \}$ and for some $v\in M$ (\refconst{oneOrbitBraid}), where 
 for any $z\in M$ and any isotopy $\{f_t\}_{t\in[0,1]}$ in $\Diff_\omega(M,\partial M)_0$, we define $f^z:[0,1]\to M$ as  
 \[
 f^z(t)=f_t(z) \text{ for } t\in [0,1].
 \]

 \begin{claim}
For almost every $z=(z_1,z_2)\in \Omega^4$ such that $z_1$ and $z_2$ do not lie in the same orbit, we have \[\gamma(\sigma_1;z)=\beta_1(\sigma^{z_1},z_2)\beta_2(\sigma^{z_2},\sigma_1(z_1))=\beta_2(\sigma^{z_2},z_1)\beta_1(\sigma^{z_1},\sigma_1(z_2)).\]
In particular, $\sigma_1(z_1)$  and $z_2$ lie on different orbits, and so do $z_1$ and $\sigma_1(z_2)$
 \end{claim}
 \begin{proof}
Recall that $\gamma(\sigma_1; z)$ can be represented as the concatenation of the following three paths in $X_2(M)$;
\begin{itemize}
    \item $t\in [0,1/3]\mapsto (s_{\bar{z}_1z_1}(3t),s_{\bar{z}_2z_2}(3t))\in X_2(M)$;
    \item $t\in [1/3,2/3]\mapsto (\sigma_{3t-1}(z_1),\sigma_{3t-1}(z_2))\in X_2(M)$;
    \item $t\in [2/3,1]\mapsto (s_{\sigma_1(z_1)\bar{z}_1}(3t-2),s_{\sigma_1(z_2)\bar{z}_2}(3t-2)) \in X_2(M)$.
\end{itemize}
Therefore, since two paths $\sigma_{3t-1}(z_1),\sigma_{3t-1}(z_2),t\in [1/3,2/3]$ lie on different orbits and so they do not intersect, we can  reparameterize freely the above as 
\begin{itemize}
    \item $t\in [0,1/3]\mapsto (s_{\bar{z}_1z_1}(3t),s_{\bar{z}_2z_2}(3t))\in X_2(M)$;
    \item $t\in [1/3,1/2]\mapsto (\sigma_{6t-2}(z_1),z_2)\in X_2(M)$;
    \item $t\in [1/2,2/3]\mapsto (\sigma_{1}(z_1),\sigma_{6t-3}(z_2))\in X_2(M)$;
    \item $t\in [2/3,1]\mapsto (s_{\sigma_1(z_1)\bar{z}_1}(3t-2),s_{\sigma_1(z_2)\bar{z}_2}(3t-2)) \in X_2(M)$.
\end{itemize}
Since for almost every $(i,z)\in \{1,2\}\times \hat{M}$, $\beta_i(\sigma^w,z)$ is well-defined for almost every $w\in \hat{M}$, it follows from the above reparametrization that $\gamma(\sigma_1;z)=\beta_1(\sigma^{z_1},z_2)\beta_2(\sigma^{z_2},\sigma_1(z_1))$ for almost every $(z_1,z_2)\in \Omega^4$. 
In a similar way, we can also see that $\gamma(\sigma_1;z)=\beta_2(\sigma^{z_2},z_1)\beta_1(\sigma^{z_1},\sigma_1(z_2))$.
\end{proof}
Since \[
q_R(\gamma(\sigma_1;z))\leq q_R(\beta_2(\sigma^{z_2},z_1))+q_R(\beta_1(\sigma^{z_1},\sigma_1(z_2))),
\] finding a uniform upper bound of  $q_R(\beta_m(\sigma^{z},v))$ for almost every $(z,v)\in \Omega^4$ such that $z$ and $v$ lie on distinct orbits is enough to find an upper bound of $q_R(\gamma(\sigma_1;w))$ for almost every $w\in \Omega^4$.

\begin{case} $\sigma^z$ is a trivial path.\\
If $\beta_m(\sigma^z,v)$ is well-defined for some $m\in\{1,2\}$ and $v\in \hat{M}$, then by a simple computation, we can see that $\beta_m(\sigma^z,v)$ is the trivial braid.
Therefore, $q_R(\beta_m(\sigma^z,v))=0$.
\end{case}
\begin{case}
$\sigma^z$ is a two-sided simple closed curve.\\
Choose a regular parametrization $\kk_i:[0,1]\to M$ of $k_i$ such that  $\kk_i(0)=\kk_i(1)\in \hat{M}$ and $\theta^{-1}_i\circ \kk_i: [0,1]\to \partial S_{a_i}$ is orientation-preserving.
Fix points $w_1^i,w_2^i$  such that $w_1^i\in \interior(S_{in}(k_i))$, $w_2^i\in \interior(S_{out}(k_i))$,  and $\beta_m(\kk_i, w_n^i)$ are well-defined for all $n,m\in\{1,2\}$.
\begin{claim}
    Let $z$ be a point in $S_{in}(k_i)\cap \hat{M}$ and $\cO_z$ the orbit containing $z$. Assume that $\sigma^z$ is a two-sided simple closed curve.
    If $\beta_m(\sigma^z, v)$ is well-defined for some $v\in \hat{M}\setminus \cO_z$ and $m\in \{1,2\}$, then $\beta_m(\sigma^z, v)$ is conjugate to $\beta_m(\kk_i, w_n^i)$ for some $n\in \{1,2\}$ in $P_2(M)$.
    In particular, $q_R(\beta_m(\sigma^z, v))=q_R(\beta_m(\kk_i, w_n^i))$.
\end{claim}
\begin{proof}
    If $v\in S_{in}(\cO_z)$, then 
    $\theta_i^{-1}(\cO_z)$ and  $\partial S_{a_i}=\theta_i^{-1}(k_i)$ bound an annulus in $S_{a_i}$ which does not contains $\theta_i^{-1}(v)$.
    When $\beta_m(\kk_i, v)$ is well-defined, this implies that there is a free homotopy from $\sigma^z$ to $\kk_i$  satisfying the condition of \reflem{moveOneStrand}.
    Hence, $\beta_m(\sigma^z, v)$  is conjugate to $\beta_m(\kk_i, v)$ in $P_2(M)$.
    Since $\interior(S_{in}(k_i))$ is path-connected, it follows from  \reflem{basePointChange} that $\beta_m(\sigma^z, v)$  is conjugate to  $\beta_m(\kk_i, w_1)$ in $P_2(M)$. 
    When $\beta_m(\kk_i,v)$ is not well-defined,
    we can take another point $v'$ in a small open ball centered at $v$ such that $v'\in \interior(S_{in}(k_i))\setminus \ell$, and both $\beta_m(\sigma^z, v')$ and  $\beta_m(\kk_i, v')$ are well-defined.
    Applying \reflem{basePointChange} and  \reflem{moveOneStrand} consecutively as above, we can also obtain the desired result. 

    Now, assume that $v\in S_{out}(\cO_z)$. Since $S_{out}(\cO_z)$ is path-connected, by \reflem{basePointChange}, 
    $\beta_m(\sigma^z, v)$ is conjugate to $\beta_m(\sigma^z, w_2^i)$ in $P_2(M)$. 
    As above, by \reflem{moveOneStrand},  $\beta_m(\sigma^z, w_2^i)$ is conjugate to $\beta_m(\kk_i, w_2^i)$ in $P_2(M)$.
    Thus, we can obtain the desired result.
\end{proof}
\end{case}

\begin{case}
$\sigma^z$ is an embedded path.\\
For each $h<D_i$, 
we denote by $\ell_{h}$ the leaf of $\cF$ that is at a distance of $h$ from $\partial S_{a_i}$.
Since $\theta_i$ is smooth, for any sufficiently small $h$, the orbit $\theta_i(\ell_h)$ is transverse to each leaf of the foliation $\cA_i$.
Hence, by taking a smaller $d$ if necessary, we may assume that for any $h\leq d$, the orbit $\theta_i(\ell_h)$ is transverse to each leaf of the foliation $\cA_i$.

We denote by $b_i$ the component of $\partial N_{\xi_i}(k_i)$ contained in $S_{in}(k_i)$.
Note that $b_i$ is also a smooth curve that is in minimal position with respect to $N^i$.
Also, $B_i$ denotes the annulus bounded by $b_i$ and $k_i$. 
Since $\sigma_t$ is $\xi_i$-supported, the support of $\sigma_1$ contained in $B_i$. Hence,  if $\sigma^z$ is an embedded path, then  $\sigma^z([0,1])$ lies in $B_i$ and it is transverse to each leaf of $\cA_i$ by the choice of $d$.

Before the estimation of the general case, we first prove the  prototypical cases.
\begin{claim}\label{Clm:totalWinding}
Let $c$ be a smooth representative of a two-sided simple closed curve in $N^i$ that is in minimal position with respect to $N^i$.
Let $\nn(c)$ be the maximum value of \[
\{\nn(c,Q):\text{$Q$ is either a junction or strip of $N^i$}\}.\]
Recall the definition of $\nn(c,Q)$ in \refconst{minimalCurve}.
If $\cc:[0,1]\to M$ is an embedded smooth path such that $\cc((0,1))\subset c \cap\hat{M}$ and $\beta_m(\cc, w)$ is well-defined for some $m\in\{1,2\}$ and $w\in \hat{M}\setminus c$, then 
$\beta_m(\cc, w)=A_{2,3}^n$ for some $n\in\ZZ$ with $|n|\leq\nn(c)+1$.  
\end{claim}
\begin{proof}
Assume that $\beta_1(\cc, w)$ is well-defined for some $w\in \hat{M}\setminus c$.
By \refrmk{diskBraid},  $\beta_1(\cc, w)=A_{2,3}^n$ for some $n\in \ZZ$.
Here, we think of $\hat{M}$ as $(-1/2,1/2)\times I$ in the universal cover.
Note that $|n|$ is the number of turns of the strand $c(\cc,\bar{z}_1)$ around the strand $c(w,\bar{z}_2)$ in $\hat{M}\times [0,1]$.

Write $w=(w_1,w_2)$. If $0\leq w_2$, then we set $r(w)$ as the geodesic ray $\{(w_1,w):w_2\leq w \leq 1/2\}$. We call the point $(w_1,1/2)$ the \emph{end} of $r(w)$. Likewise, if $w_2<0$, then we set $r(w)$ as the geodesic ray $\{(w_1,w):-1/2\leq w\leq w_2 \}$ and call $(w_1,-1/2)$ the \emph{end} of $r(w)$.

Observe that if the end of $r(w)$ is contained in a strip, then $r(w)$ intersects transversely each branch of $c$. Otherwise, by the monotone condition of $c$ in junctions (\refconst{minimalCurve}), 
each branch either intersects $r(w)$ along a unique arc or is transverse to $r(w)$.
Meanwhile, when closing up $(\cc,w)$ to $\beta_1(\cc,\bar{z}_1)$, one more turn can be introduced. 
It follows from the observation that $|n|\leq n_0+1$ where $n_0$ is the number of branches in a junction or a strip that contains the end of $r(w)$.

We can do the similar estimation for the case of $\beta_2(\cc, w)$. Thus, we are done. 
\end{proof}

Now, we consider the case of the paths lying in $B_i$.

\begin{claim}\label{Clm:shortPath}
    Let $\cc:[0,1]\to M$ be an embedded smooth path such that 
    $\cc((0,1))\subset \hat{M}$ and it is contained in an orbit of $\sigma_t$.
    Assume that $\cc([0,1])$ is contained the interior of $B_i$ and it is transverse to each leaf of $\cA_i$.
    If $\beta_m(\cc, w)$ is well-defined for some $m\in\{1,2\}$ and $w\in \hat{M}\setminus \cc([0,1])$, then $\beta_m(\cc, w)=A_{2,3}^n$ for some $n\in\ZZ$ with $|n|\leq\nn(k_i)+3$.
\end{claim}
\begin{proof}
By \refrmk{diskBraid},  $\beta_m(\cc, w)=A_{2,3}^n$ for some $n\in \ZZ$.
We write $\alpha_t$ for the leaf of $\cA_i$ containing $c(t)$. 
     Since $\cc(0)\neq \cc(1)$ and $\cc$ is transverse to each leaf of $\cA_i$,  we have that $\alpha_t\neq \alpha_s$ for any $s\neq t \in [0,1]$. 
    Hence, we can find  a smooth path $\dd_0:[0,1]\to M$ such that $\dd_0$ is a smooth curve perpendicular to each leaf of $\cA_i$ and there is an isotopy $F:[0,1]\times [0,1] \to M$ satisfying the following:
    \begin{itemize}
        \item $F(0,t)=\cc(t)$ and $F(1,s)=\dd_0(t)$;
        \item $F(s,t)\subset \alpha_t$ for all $(s,t)\in [0,1]\times [0,1]$;
        \item $F([0,1]\times [0,1])\subset M\setminus \{w\}$.
    \end{itemize}  
Note that $\dd_0([0,1])$ is a subarc of $\frak{d}$ that is the component of $\partial N_{\xi'}(k_i)$, contained in $S_{in}(k_i)$,  for some $\xi'\leq \xi_i$.
Moreover, $\frak{d}$ is a simple closed curve in minimal position with respect to $N_i$, lying on $B_i$.

Then, we construct $\dd(t)$ as the following:
\begin{itemize}
    \item $t\in[0,1/3] \mapsto s_{\cc(0),\dd_0(0)}(t)$;
    \item $t\in [1/3,2/3]\mapsto \dd_0(3t-1)$;
    \item $t\in[2/3,1] \mapsto s_{\dd_0(1),\cc(1)}(t)$.
\end{itemize}
Note that $s_{\cc(0),\dd(0)}$ and $s_{\dd(1),\cc(1)}$ are geodesic segments contained in $\alpha_0$ and $\alpha_1$, respectively.
Since $\beta_m(\cc,w)=\beta_m(\dd,w)$, $\nn(\frak{d})=\nn(k_i)$, and the ray $r(w)$, introduced in \refclm{totalWinding}, can intersect geodesic segments $s_{\cc(0),\dd(0)}$ and $s_{\dd(1),\cc(1)}$, 
we can see that $|n|\leq \nn(k_i)+3$.
\end{proof}

Now, we are ready to estimate the general case.
Choose $z\in \hat{M}$.
Assume that $\sigma^{z}$ is an embedded path with $\sigma^z(1)\in \hat{M}$ and $v$ is a point in $\hat{M}$. 
Hence, $\sigma^z([0,1])$ is a path, lying in $B_i$ and transverse to each leaf of $\cA_i$ by the choice of $d$ and  construction of $\sigma_t$ (\refconst{supportedTwist}).

We write $\fN_i=| k_i\cap \ell|$.
If $\fN_i=0$ and so $\sigma^z((0,1))\subset \hat{M}$, then by \refclm{shortPath}, $\sigma^z((0,1))\subset \hat{M}$, $\beta_m(\sigma^{z}, v)=A_{2,3}^n$  for some $n$ with $|n|\leq \nn(k_i)+3$ for almost every $(v,m)\in \hat{M}\times\{1,2\}$. 
Therefore, since $A_{2,3}=B_{2,3}^2$ and $B_{2,3}\in R$, we have \[
q_R(\beta_m(\sigma^{z}, v))\leq 2(\nn(k_i)+3)
\]
for almost every $(v,m)\in \hat{M}\times\{1,2\}$.  

Otherwise,  since $\fN_i\neq 0$, we can take a finite sequence of numbers, $0=s_0<s_1<\cdots<s_{\fN_i}<s_{\fN_i+1}=1$ such that 
\[
    k_i \cap \ell=\{\sigma^z(s_1),\sigma^z(s_2),\dots, \sigma^z(s_{\fN_i})\}.
 \]
Then, for almost every $(v,m)\in \hat{M}\times\{1,2\}$, $\beta_m(\sigma^z,v)$ are well-defined and 
    \[
    \beta_m(\sigma^z,v)= \beta_m(\sigma^z_0,v)\cdot \eta_m(\sigma^z(s_0),v)^{\epsilon_0}\cdot \beta_m(\sigma^z_1,v)\cdot\eta_m(\sigma^z(s_1),v)^{\epsilon_1}\cdot \ldots \cdot  \eta_m(\sigma^z(s_{\fN_i}),v)^{\epsilon_{\fN_i}}\cdot \beta_m(\sigma^z_{\fN_i},v)
    \]
    for some $\epsilon_i\in\{\pm1\}$
    where  each $\sigma^z_j:[0,1]\to M$ is a reparametrization of $\sigma^z|_{[s_j,s_{j+1}]}$, preserving the orientation.
    Therefore, by \refrmk{eachFactor}, \refclm{shortPath}, \reflem{auxBraid} and \refrmk{etaTerm}, 
    \[
q_R(\beta_m(\sigma^z_{j},v))\leq 2(\nn(k_i)+3) \text{ and }q_R(\eta_m(\sigma^z(s_j),v)^{\epsilon_j})\leq 
    3.\]
    Thus, we have
    \[
    q_R(\beta_m(\sigma^z, v))\leq 2(\nn(k_i)+3)(\fN_i+1)+3\fN_i=
    2\nn(k_i)\fN_i+2\nn(k_i)+9\fN_i+6
    \]
for almost every $(v,m)\in \hat{M}\times\{1,2\}$.
\end{case}

From the above case study, 
we have that 
\begin{align*}
   q_R(\gamma(\sigma_1;z))
   &\leq q_R(\beta_2(\sigma^{z_2},z_1))+q_R(\beta_1(\sigma^{z_1},\sigma_1(z_2)))\\
   & \leq 2(2\nn(k_i)\fN_i+2\nn(k_i)+9\fN_i+6+\max_{m,n\in \{1,2\}}\{q_R(\beta_m(\kk_i, w_n^i))\})
\end{align*}
for almost every $z\in \Omega^4$.
Set $\sigma_1=\tau_i$ and \[K_R=2(2\nn(k_i)\fN_i+2\nn(k_i)+9\fN_i+6+\max_{m,n\in \{1,2\}}\{q_R(\beta_m(\kk_i, w_n^i))\}).\]
Note that by the choice of $\{k_j\}_{j\in \NN}$, $\nn(k_j)=\nn(k_{j+1})$ and $\fN_j=\fN_{j+1}$ for all $j\in \NN$. 
Moreover, by \reflem{moveOneStrand} and \reflem{basePointChange},
\[\max_{m,n\in \{1,2\}}\{q_R(\beta_m(\kk_j, w_n^j))\}=\max_{m,n\in \{1,2\}}\{q_R(\beta_m(\kk_{j+1}, w_n^{j+1}))\}\] for all $j\in \NN$. 
Thus, we can conclude that  the sequence $\{\tau_j\}_{j\in \NN}$ of $\xi_j$-supported Dehn twists along $k_j$ satisfies the following inequality:
for each $j\in \NN$, \[
q_R(\gamma(\tau_j; z))\leq K_R
\]
for almost every $z\in \Omega^4$.
In the same way, by replacing $\{\sigma_t\}_{t\in [0,1]}$ with $\{\sigma_{-t}\}_{t\in [0,1]}$, we can also see that for each $j\in \NN$, \[
q_R(\gamma(\tau_j^{-1}; z))\leq K_R'
\]
for almost every $z\in \Omega^4$.
Thus, this implies the desired result.  
\end{proof}

\subsection{Ishida type argument for the injectivity of $\cG$}

By \refthm{wellDefineness}, it is shown that 
$\cG$ is a well-defined homomorphism from $Q(B_2(M))$ to $Q(\Diff_\omega(M,\partial M)_0)$ as $\RR$-vector spaces.
In this section, we show the injectivity of $\cG$, following the strategy outlined in \cite{Ishida14} and \cite{Brandenbursky15}. However, our proof is not identical.

\begin{thm}\label{Thm:injectivity}
    $\cG$ is injective.
\end{thm}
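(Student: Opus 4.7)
Let $\varphi \in Q(B_2(M))$ be nonzero; the plan is to exhibit $f \in \Diff_\omega(M,\partial M)_0$ with $\cG(\varphi)(f)\neq 0$. First I would reduce to the pure braid group: since $P_2(M)$ has index $2$ in $B_2(M)$ and a homogeneous quasimorphism vanishing on a finite-index subgroup is identically $0$ (every $g\in G$ has a power in $H$, so homogeneity forces $\varphi(g)=0$), the restriction $\varphi|_{P_2(M)}$ is again nonzero. Using \refprop{twistGrp}, every element of $P_2(M)\cong\PMod(M,\{p_1,p_2\})$ (from \refcor{equivalence}) has a power in the twist subgroup $\cT(M,\{p_1,p_2\})$; by homogeneity I may choose $\alpha \in \cT(M,\{p_1,p_2\})$ with $\varphi(\alpha)\neq 0$, and express it as a product of Dehn twists along two-sided simple closed curves in $M\setminus\{p_1,p_2\}$.

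Next I would build a smooth area-preserving realization $f_\alpha$ of $\alpha$. Each Dehn twist along a two-sided simple closed curve admits a standard area-preserving model supported in an annular tubular neighborhood, namely the time-one flow of the Hamiltonian vector field of a function of the transverse coordinate. Composing these models yields $f_\alpha \in \Diff_\omega(M,\partial M)$ fixing $p_1,p_2$ pointwise and representing $\alpha$. Using Earle--Schatz and \refthm{contractible}, $f_\alpha$ lies in $\Diff_\omega(M,\partial M)_0$, and it admits an area-preserving isotopy $\{f_{\alpha,t}\}$ from $\id$ to $f_\alpha$ whose induced path $t\mapsto (f_{\alpha,t}(p_1),f_{\alpha,t}(p_2))$ realizes the loop $\alpha$. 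Setting $\bar z=(p_1,p_2)$, this gives $\gamma(f_\alpha;\bar z)=\alpha$, and since $\bar z$ is fixed by the diagonal action $\bar{f}_\alpha$, the cocycle identity yields $\gamma(f_\alpha^N;\bar z)=\alpha^N$ for every $N$.

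To evaluate $\cG(\varphi)(f_\alpha)=\lim_N \cG^\circ(\varphi)(f_\alpha^N)/N$, I would apply Kingman's subadditive ergodic theorem to $F_N(z)=\varphi(\gamma(f_\alpha^N;z))$. The cocycle relation $\gamma(f_\alpha^{N+M};z)=\gamma(f_\alpha^N;z)\cdot \gamma(f_\alpha^M;\bar{f}_\alpha^N(z))$ together with the defect $D$ of $\varphi$ gives the almost-additivity $|F_{N+M}(z)-F_N(z)-F_M(\bar{f}_\alpha^N(z))|\leq D$, while \reflem{finiteness} yields a uniform bound $|F_N(z)|\leq CN$. Hence $\rho(z)=\lim_N F_N(z)/N$ exists almost everywhere, is $\bar{f}_\alpha$-invariant, and dominated convergence gives
\[
\cG(\varphi)(f_\alpha)=\int_{X_2(M)}\rho(z)\,dz,
\]
with $\rho(\bar z)=\varphi(\alpha)$ at the fixed point.

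The main obstacle is showing $\int\rho\neq 0$. My plan is a localization argument: by arranging the Dehn twists constituting $\alpha$ to have annular supports disjoint from small open discs $B_i$ around $p_i$, the map $f_\alpha$ is the identity on $B_1\cup B_2$; hence for any $z$ with $z_i\in B_i$, the orbit $\bar{f}_\alpha^k(z)$ is constant and $\gamma(f_\alpha;\cdot)$ is locally constant on the connected component of $\Omega^4$ containing $\bar z$. Therefore $\gamma(f_\alpha^N;z)=\alpha^N$ and $\rho(z)=\varphi(\alpha)$ on an open set $V$ of positive measure. The hard part, and where the argument must diverge from Ishida's orientable treatment, is controlling the complementary contribution: essential Dehn twists on $M$ cannot be localized, so one must bound $|\int_{X_2(M)\setminus V}\rho|$ below $\varphi(\alpha)\cdot\mathrm{vol}(V)$. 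Here I would use the compactification $\closure{X}_2(M)$ developed earlier, the bounded word-length estimate of \reflem{finiteness}, and conjugation invariance (\refprop{conjInv}) to reduce the complement to finitely many $\bar{f}_\alpha$-invariant pieces on each of which $\rho$ takes a computable value; if necessary, passing to $f_\alpha^k$ and using homogenization absorbs the residual defect.
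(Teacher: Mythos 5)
Your setup---passing to a pure braid $\alpha$ in the twist subgroup via \refprop{twistGrp}, realizing it by an area-preserving composite of Dehn twists supported away from the marked points, and localizing around the base point $\bar z$---matches the paper's construction closely, and the Kingman-style evaluation of $\cG(\varphi)$ is a reasonable alternative to the paper's direct computation of the homogenization.  But the part you flag as ``the hard part'' is indeed where the proof lives, and your plan for it has a genuine gap.

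You propose to take \emph{small} disks $B_i$, so that $\gamma(f_\alpha^N;z)=\alpha^N$ for $z_1\in B_1$, $z_2\in B_2$, and then to bound $\bigl|\int_{X_2(M)\setminus V}\rho\bigr|$ by $|\varphi(\alpha)|\cdot\mathrm{vol}(V)$.  With small disks this cannot work: $\mathrm{vol}(V)\approx 2a_1a_2$ is tiny while $\mathrm{vol}(X_2(M)\setminus V)$ is essentially all of $X_2(M)$, and the only tool you have on the complement is the word-length bound of \reflem{finiteness}, which gives an estimate proportional to $\mathrm{vol}(X_2(M)\setminus V)$, not one that is small.  Your fallback---partition the complement into finitely many $\bar f_\alpha$-invariant pieces with ``computable'' $\rho$-values---does not lead anywhere, and passing to $f_\alpha^k$ cannot help because $\cG(\varphi)$ is already homogeneous, so $\cG(\varphi)(f_\alpha^k)=k\,\cG(\varphi)(f_\alpha)$ scales both the good and the bad contributions by the same factor.

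The paper resolves this in the opposite direction: one must let $U_1,U_2$ \emph{grow} so that their areas $a_1,a_2$ together exhaust $\mathrm{area}(M)$, which makes the complement term $\fR$ tend to $0$ (precisely because \reflem{finiteness} gives $|\fR|\le C\cdot\mathrm{vol}(X_2(M)\setminus X_2(U))$).  But then the contribution from pairs with both points in the \emph{same} $U_i$ is no longer negligible, and the local value of $\varphi\circ\gamma(g;\cdot)$ there is not $\varphi(\alpha)$; your ``$\rho=\varphi(\alpha)$ on $V$'' accounts for only the cross term.  The paper therefore writes the local contribution as the quadratic form
\[
\fF=\varphi_{11}a_1^2+2\varphi_{12}a_1a_2+\varphi_{22}a_2^2,
\]
observes that the cross coefficient is $\varphi_{12}=\varphi(\alpha)\neq 0$ so the polynomial is not identically zero, fixes a ratio $a_1/a_2$ where it is nonzero, and then scales $U_1,U_2$ up keeping that ratio: $\fF$ stays bounded away from zero while $\fR\to 0$.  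This interplay between the nondegeneracy of the quadratic form and the vanishing of the remainder as $U$ fills up $M$ is the missing idea; without it, there is no way to control the sign and size of the complementary integral.
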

\begin{proof}
    Let $a_1,a_2,\epsilon$ be positive numbers such that $a_1+a_2\leq 1$ and $0<\epsilon<a_i^*/4$ for all $i=1,2$. 
    Assume that $\bar{z}_i\in \interior(V_i(a_1,a_2,\epsilon)) \cap \gamma_0$ for all $i=1,2$. 
    Set $R=\{B_{2,3},\rho_2,\rho_3\}$ as a finite c-generating set of $B_2(M)$.
    
    Let $\varphi$ be a non-trivial element in $Q(B_2(M))$. 
    Then, there is a braid $\beta$ in $B_2(M)$ such that $\varphi(\beta)\neq 0$.
    Since, by \refcor{equivalence},  $B_2(M)=\Mod(M,\{\bar{z}_1,\bar{z}_2\})$, there is a corresponding mapping class $\fB$ in  $\Mod(M,\{\bar{z}_1,\bar{z}_2\})$.
    By \refprop{twistGrp}, there is a non-trivial power $B^k\in \cT(M,\{\bar{z}_1,\bar{z}_2\})$.
    By the definition, $\cT(M,\{\bar{z}_1,\bar{z}_2\})$ is a subgroup of $\PMod(M,\{\bar{z}_1,\bar{z}_2\})$ (e.g. see \cite[A.~Appendix]{KatayamaKuno24}) and so $\beta^k\in P_2(M)$. 
     Since $\varphi(\beta^k)\neq 0$, without loss of the generality,
     we may assume that $\beta$ is a pure braid and $\fB\in \cT(M,\{\bar{z}_1,\bar{z}_2\})$.

     Now, we construct a diffeomorphism $g$ in $\Diff_\omega(M,\partial M)_0$ such that $g$ is a representative of $\fB$ and $\cG(\varphi)(g)\neq 0$. This implies the injectivity of $\cG$.

    Since $\fB\in \cT(M,\{\bar{z}_1,\bar{z}_2\})$, there is a finite collection $\{\gamma_1,\cdots, \gamma_n\}$ of two-sided simple closed curves in $M\setminus\{\bar{z}_1,\bar{z}_2\}$ such that each $\gamma_i$ is either peripheral or generic and $\fB=T_{\gamma_n} \circ \cdots \circ T_{\gamma_1}$, where $T_{\gamma_i}$ is the Dehn twist along $\gamma_i$.

    By \reflem{canonicalDehnTwist}, 
    for each $i\in\{1,2,\dots, n\}$, we can take a number $K_i>0$ and  a sequence $\{(\epsilon_{m,i}, k_{m,i}, \xi_{m,i}, \tau_{m,i})\}_{m\in \NN}$ satisfying the following:
        \begin{itemize}
            \item $\{\epsilon_{m,i}\}_{m\in\NN}$ is a strictly decreasing sequence of positive numbers such that $\epsilon_{1,i}<\epsilon$ and $\epsilon_{m,i} \to 0$ as $m\to \infty$;
            \item $\{k_{m,i} \}_{m\in\NN}$ is a sequence of representatives of $\gamma_i$ such that for each $m\in \NN$, $k_{m,i}$ is in minimal position with respect to $N(a_1,a_2,\epsilon_{m,i} )$;
            \item $\{\xi_{m,i} \}_{m\in\NN}$ is a sequence of positive numbers such that for each $m\in \NN$, $N_{\xi_{m,i} }(k_{m,i})$ is a tubular neighborhood of $k_{m,i} $ in $N(a_1,a_2,\epsilon_{m,i} )$;
            \item for each $m\in\NN$, $\tau_{m,i} $ is a $\xi_{m,i} $-supported Dehn twist along $k_{m,i}$;
            \item the following inequality holds:
            for any $m\in \NN$,
            \[q_S(\gamma(\tau_{m,i}^{\pm 1} ,z))\leq K_i\]
            for almost every $z\in \Omega^4$,
            where  $q_R$ is the norm c-generated by $R$.
        \end{itemize}

    For each $i\in \{1,2,\cdots, n\}$, there is an $e_i\in \{ \pm 1\}$ such that $\tau_{m,i}^{e_i}$ are  representatives of $T_{\gamma_i}$ in $\Diff_\omega(M,\partial M)_0$. 
    For any $m\in \NN$, we set $g_m=\tau_{m,n}^{e_n}\circ \tau_{m,n-1}^{e_{n-1}}\circ\cdots \circ \tau_{m,1}^{e_1}$.
    Each $g_m$ is a representative of $\fB$ in $\Diff_\omega(M,\partial M)_0$.
    We write \[V_j^m=\bigcap_{i=1}^n V_j(a_1,a_2,\epsilon_{m,i})\]
    for $j\in \{1,2\}.$
    Note that  for each $j\in\{1,2\}$, $\{V_j^m\}_{m\in\NN}$ is a nested increasing sequence and the area of $V_j^m$ converges to $a_j^*$ as $m\to \infty$.
    Also, $g_{m'}$ are the identity on $V_j^m$ for all $m'\geq m$.
    
    Set $\varphi_{ij}=\varphi(\gamma(g_m;z))$ for $z_1\in V_i^m$ and $z_2\in V_j^m$.
    Note that $\varphi_{ij}$ do not depend on $m$.
    Now, we consider the following polynomial in $\RR[x,y]$,
\[
P(x,y)=\varphi_{11}x^2 + \varphi_{12}xy +\varphi_{21}yx+ \varphi_{22}y^2.
\]
Since $\varphi_{12}=\varphi(\beta)\neq 0$ and $\varphi_{12}=\varphi_{21}$ by the invariance of $\varphi$ under conjugation (\refprop{conjInv}),
\[
P(x,y)=\varphi_{11}x^2 + 2\varphi_{12}xy + \varphi_{22}y^2
\]
and it is not  identically $0$.
Therefore, there are positive numbers $c_1$ and $c_2$ such that $c_1+c_2< 1$ and $P(c_1,c_2)\neq 0$.
Then, we replace $a_i$ by $c_i$ and rechoose the numbers $\epsilon,K_i>0$ and  a sequence $\{(\epsilon_{m,i}, k_{m,i}, \xi_{m,i}, \tau_{m,i})\}_{m\in \NN}$ as above.
Observe that  $\varphi_{ij}$ are invariant under the replacement and $P(a_1,a_2)\neq0$.

Set \[\fK=K_n+K_{n-1}+\cdots+K_1\text{ and } \fM=\max\{|\varphi(B_{2,3}^{\pm 1})|, |\varphi(\rho_2^{\pm 1})|,|\varphi(\rho_3^{\pm 1})|\}.\]
Since there is an $f>0$ such that 
$|P(b_1,b_2)|\geq f$ for any $b_1$, $b_2$ with $a_i\leq b_i$ and $b_1/b_2=a_1/a_2$, we can choose $b_1,b_2>0$ such that $b_1+b_2<1$, $a_i\leq b_i$, $b_1/b_2=a_1/a_2$  and
\[
\fK(\fM+D(\varphi))(1-(b_1+b_2)^2)<|P(b_1,b_2)|
\]
where $D(\varphi)$ is  the defect of $\varphi$.

Since  $b_1+b_2<1$ and $b_1/b_2=a_1/a_2$, the area of each $V_i^m$ is greater than $b_i$ for any sufficiently large $m$. Fix such a $m$.
Let $\{U_1,U_2\}$ be a pair of disjoint open subsets of $\hat{M}$ such that $\bar{z}_i\in U_i \subset \interior(V_i^m)$ and each $U_i$ is a topological disk with area $b_i$.
See \reffig{decomposition}.
Note that the support of $g_m$  does not intersect $U=U_1\cup U_2$. 

Now, we claim that $\cG(\varphi)(g_m)\neq 0$.
Consider 
\begin{align*}
\cG(\varphi)(g_m)&=\lim_{p\to \infty} \frac{1}{p}\int_{X_2(M)}\varphi(\gamma(g_m^p;z))dz \\
&=\lim_{p\to \infty} \frac{1}{p}
\left(
\int_{X_2(U)}\varphi(\gamma(g_m^p;z))dz+
\int_{X_2(M)\setminus X_2(U)}\varphi(\gamma(g_m^p;z))dz
\right).\\
\end{align*}

First, we consider the first term \[
\fF=\lim_{p\to \infty} \frac{1}{p}
\int_{X_2(U)}\varphi(\gamma(g_m^p;z))dz.
\] Since $g_m$ is the identity on $U$ and $\gamma(g_m^p;z)=\gamma(g_m^{p-1};z)\cdot\gamma(g_m;z)$ for all $z\in X_2(U)$, 
we have that
\[\fF=\int_{X_2(U)}\varphi(\gamma(g_m;z))dz.\]
Then, we can write   
\begin{equation}\label{Eqn:core}
\fF= \varphi_{11}b_1^2 + \varphi_{12}b_1b_2 +\varphi_{21} b_2b_1+ \varphi_{22}b_2^2=P(b_1,b_2).
\end{equation}

Then, we consider the second term,
\[
\fR=\lim_{p\to \infty} \frac{1}{p} \int_{X_2(M)\setminus X_2(U)}\varphi(\gamma(g_m^p;z))dz.
\]
Since $\varphi$ is homogeneous and for each $i\in\{1,2,\dots, n\}$ and for any $m\in \NN$,
\[q_S(\gamma(\tau_{m,i}^{e_i} ,z))\leq K_i\]
for almost every $z\in \Omega^4$,
we have that 
for each $m\in \NN$,
\begin{align*}
q_R(\gamma(g_m;z))&=q_R(\gamma(\tau_{m,1}^{e_1};z)\cdot \gamma(\tau_{m,2}^{e_2};\tau_{m,1}(z)) \cdot\dots \cdot \gamma(\tau_{m,n}^{e_n};\tau_{m,n-1}^{e_{n-1}}\cdots\tau_{m,2}^{e_2}\tau_{m,1}^{e_1}(z))))\\
&\leq q_R(\gamma(\tau_{m,1}^{e_1};z))+q_R( \gamma(\tau_{m,2}^{e_2};\tau_{m,1}^{e_1}(z)))+ \dots+ q_R(\gamma(\tau_{m,n}^{e_n};\tau_{m,n-1}^{e_{n-1}}\cdots\tau_{m,2}^{e_2}\tau_{m,1}^{e_1}(z)))\\
&\leq K_1+K_2+\cdots+K_n\\
&=\fK
\end{align*}
for almost every $z\in \Omega^4$.
Therefore, 
\begin{align*}
q_R(\gamma(g_m^p;z))&=q_R(\gamma(g_m;z)\cdot \gamma(g_m;g_m(z))\cdot \ldots \cdot\gamma(g_m;g_m^{p-1}(z))))\\
&\leq q_R(\gamma(g_m;z))+q_R( \gamma(g_m;g_m(z)))+ \cdots +q_R(\gamma(g_m;g_m^{p-1}(z)))\\
&\leq p\fK
\end{align*}
for almost every $z\in \Omega^4$.
Then, we have that
\[
|\varphi(\gamma(g_m^p;z))|\leq p\fK(\fM+D(\varphi))
\]
for almost every $z\in \Omega^4$ where $D(\varphi)$ is the defect of $\varphi$.
Therefore, 
\begin{equation}\label{Eqn:remainder}
|\fR|\leq \fK(\fM+D(\varphi)) \cdot vol(X_2(M)\setminus X_2(U))=\fK(\fM+D(\varphi)) (1-(b_1+b_2)^2)
\end{equation}
where $vol(X_2(M)\setminus X_2(U))$ is the volume of $X_2(M)\setminus X_2(U)$ in $X_2(M)$. 

Since \[
\fK(\fM+D(\varphi))(1-(b_1+b_2)^2)<|P(b_1,b_2)|,
\] by \refeqn{core} and \refeqn{remainder}, 
we can see that 
\[\cG(\varphi)(g_m)=\fF+\fR\neq0.\]
This shows the injectivity of $\cG$.
\end{proof}

\begin{thm}\label{Thm:inftyDim}
    The group $\Diff_{\omega}(M,\partial M)_0$ admits countably many homogeneous quasimorphisms which are linearly independent.
\end{thm}
\begin{proof}
    It is a combination of \reflem{inftyDimBraid} and \refthm{injectivity}.
\end{proof}

\section{Boundedness of the word length of the cocycle $\gamma$}\label{Sec:wordLen}

In the proof of  \refthm{wellDefineness} and \refthm{injectivity}, we used \reflem{finiteness} without providing a proof.
In this section, we prove \reflem{finiteness}. 
To do this, we first introduce some compactification  $\closure{X}_2(M)$ for $X_2(M)$, which is a sort of blowing up the diagonal $M^{\times 2}$. 
This is a modification of the blowing-up set, introduced in the proof of \cite[Proposition~2]{GambaudoPecou99}.
Our  blowing-up set  is homotopy equivalent to the configuration space unlike the  blowing-up set in \cite[Proposition~2]{GambaudoPecou99}. 

\subsection{The injectivity radius of the M\"obius band}
To construct a well-defined compactification, we need the concept of the injectivity radius of a Riemannian manifold.
Unlike closed Riemannian manifolds, the injectivity radius of a Riemannian manifold with non-empty boundary is not well defined near the boundary.
Hence, we need to modify the definition of the injectivity radius. 
We follow  a version of the injectivity radius, used in \cite{BuragoIvanovLassasLu24}.
See \cite[Section 2.1]{BuragoIvanovLassasLu24}.
Instead of introducing a general definition of the injectivity radius for a non-orientable Riemannian manifold with boundary, for the simplicity, we only introduce the injectivity radius of our M\"obius band $M$. Also, we define a version of an exponential map at each point in $M$.

Recall that we use the Riemannian metric, inherited from the Euclidean metric on the universal cover. 
We consider $\widetilde{M}$ as a subset of $\RR^2$ and also $\tau$ is extended on $\RR^2$ in the obvious way.
Now, we define the \emph{injectivity radius} $\inj(M)$ of $M$ as the largest number $r>0$ satisfying the following condition:
the open $r$-ball $B_r(x)$ at $x$ in $\RR^2$ does not intersect $\tau^n(B_r(x))$  for any point $x\in \widetilde{M}$ and for all $n\in \ZZ\setminus \{0\}$. 
Observe that $\inj(M)=1/2$.

Say $M_{ext}=\RR^2/\langle \tau \rangle$. Also, $M_{ext}$ is equipped with the Riemannian metric induced from the Euclidean metric in $\RR^2$.
Then, we can see that for each $p\in M_{ext}$, the \emph{exponential map} $\exp_p^{ext}$ at $p$ in $M_{ext}$ is well-defined near $p$ as follows. For any $r< \inj(M)$, there is a diffeomorphism $\exp_p^{ext}$ from the open $r$-ball $B_r(0)$ in $T_pM_{ext}$ to the open $r$-neighborhood $N_r(p)$ of $p$ in $M_{ext}$ defined as follows:
for any $v\in B_r(0)$, there is a unique geodesic $\gamma_v:[0,1] \to M_{ext}$ satisfying $\gamma_v(0) = p$ with initial tangent vector $\gamma_v'(0) = v$. We define $\exp_p^{ext}:B_r(0)\to N_r(p)$ as $\exp_p^{ext}(v)=\gamma_v(1)$.

Note that $M$ is a submanifold of $M_{ext}$, the boundary of which is a geodesic. 
Fix $r$ with $0<r\leq1/2$.
For each $p\in M$, the open $r$-neighborhood of $p$ in $M$ is $N_r(p)\cap M$. 
If $p$ is not contained in the open $r$-neighborhood of the boundary $\partial M$, then $N_r(p)\cap M=N_r(p)$. Otherwise, $N_r(p)\cap M\neq N_r(p)$. 
In this case, there is a unique closed half-plane $H_p$ in $T_pM=T_pM_{ext}$ such that $(\exp_p^{ext})^{-1}(N_r(p)\cap M)=B_r(0)\cap H_p$. 
Therefore, for each $p\in M$ and for any $v\in B_r(0)\cap H_p$, $\exp_p^{ext}(v)$ is a well-defined point in $M$.

\begin{rmk}
    Note that the half-plane $H_p$ does not depend on $r$. 
\end{rmk}

By the remark, for any $p$ in the open $1/2$-neighborhood of $\partial M$, we can find a well-defined  half-plane $H_p$ such that $(\exp_p^{ext})^{-1}(N_r(p)\cap M)=B_r(0)\cap H_p$ for all $0<r\leq 1/2$ . 
We call $H_p$ the \emph{defining half-plane} at $p$.
If $p\in \partial M$, then the boundary of the defining half-plane is a line passing through $0$.

Now, we define the \emph{exponential map} $\exp_p$ at $p$ in $M$ as follows: if $p$ is not in the open $1/2$-neighborhood of $\partial M$, then we define $\exp_p:B_{1/2}(0)\to N_{1/2}(p)$ as $\exp_p=\exp_p^{ext}$. Otherwise, we define $\exp_p:B_{1/2}(0)\cap H_p\to N_{1/2}(p)\cap M$ by restricting the domain and range of the exponential map $\exp_p^{ext}:B_{1/2}(0)\to N_{1/2}(p)
$ onto $B_{1/2}(0)\cap H_p$ and $N_{1/2}(p)\cap M$, respectively. 

\subsection{Blowing up $\Delta_2(M)$}

Inspired by the blowing-up set $\cK$ of the generalized diagonal in $\closure{\DD}\times \cdots \times \closure{\DD}$, introduced in the proof of \cite[Proposition 2]{GambaudoPecou99}, we compactify  $X_2(M)$ by blowing up the diagonal $\Delta=\Delta_2(M)$ in $M\times M$ so that $\Diff^1(M)$ acts continuously on the compactification.

For $\epsilon\geq 0$, we define  
$\Delta(\epsilon)$ and $\delta(\epsilon)$ as 
$$\Delta(\epsilon)=\{(p_1,p_2)\in M\times M: d(p_1,p_2)\leq \epsilon\}$$
and 
$$\delta(\epsilon)=\{(p_1,p_2)\in M\times M: d(p_1,p_2)=\epsilon\}$$
where $d$ is the Euclidean metric.
Note that $\Delta(\epsilon)$ and $\delta(\epsilon)$ are closed sets and $\Delta(0)=\delta(0)=\Delta$.
We also define $\Delta^+(\epsilon)=\Delta(\epsilon)\setminus \Delta$.

Observe that if there is a sequence $\{(p_n,q_n)\}_{n\in \NN}$ in $X_2(M)$ such that $\{p_n\}_{n\in \NN}
$ and $\{q_n\}_{n\in \NN}$ are Cauchy sequences, then $p_n\to p$ and $q_n\to q$ for some $p$ and $q\in M$ as $M$ is compact. If $p\neq q$, then $\{(p_n,q_n)\}_{n\in \NN}$ converges to a point in $X_n$. Otherwise, $p=q$ and $\{(p_n,q_n)\}_{n\in \NN}$ approaches the diagonal $\Delta$ as $n\to \infty$.
Therefore, once we find a good compactification of $\Delta^+(\epsilon)$ for some $0<\epsilon<\inj(M)$, it provides a desired compactification of $X_2(M)$.

Choose $\epsilon$ with $0<\epsilon<1/2$. Note that $\inj(M)=1/2$.
We define the \emph{blow-up} $\fB\Delta(\epsilon)$ of $\Delta(\epsilon)$ as the collection of all triples $(p,q,R)$ such that 
$(p,q)\in \Delta(\epsilon)$ and $R$ is a ray in $T_pM$, starting at $0$ and passing through $\exp_p^{-1}(q)$.
Note that if $(p,q,R)\in \fB\Delta(\epsilon) \setminus \Delta^+(\epsilon)$, then $p=q$. In this case, $R$ can be any ray in $T_pM$ starting at $0$.

To assign a reasonable topology of the blow-up $\fB\Delta(\epsilon)$, we consider an embedding ${\Bl_\epsilon}$ of  $\fB\Delta(\epsilon)$ into the tangent bundle $TM$ defined as follows:
let $(p,q,R)$ be a point in $\fB\Delta(\epsilon)$ and $v_R$ the unit vector in $R$, which is unique.
Then, we set  $\Bl_\epsilon(p,q,R)=e^{d(p,q)}v_R\in T_pM$ where $d(p,q)$ is the distance between $p$ and $q$ in $M$.
Via the embedding $\Bl_\epsilon$, we think of the blow-up $\fB\Delta(\epsilon)$ as a subspace of $TM$. 
Therefore, by taking the subspace topology, we can introduce a natural topology for  $\fB\Delta(\epsilon)$. Observe the following proposition:
\begin{prop}\label{Prop:blowUpCompact}
$\fB\Delta(\epsilon)$ is compact. 
\end{prop}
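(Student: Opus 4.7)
The approach is to exploit the embedding $\Bl_\epsilon$ to identify $\fB\Delta(\epsilon)$ with a subset of $TM$. Since the topology on $\fB\Delta(\epsilon)$ was declared to be the subspace topology inherited via $\Bl_\epsilon$, it suffices to show that $\Bl_\epsilon(\fB\Delta(\epsilon))$ is a compact subset of $TM$. The plan is to observe first that this image lies in the closed disk sub-bundle
\[
E := \{v \in TM : |v| \leq e^{\epsilon}\},
\]
which is compact because $M$ is compact and each fiber is a closed Euclidean disk, and then to verify that $\Bl_\epsilon(\fB\Delta(\epsilon))$ is closed in $E$.

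To establish closedness I would argue sequentially. Take a sequence $(p_n, q_n, R_n)$ in $\fB\Delta(\epsilon)$ with $w_n := \Bl_\epsilon(p_n, q_n, R_n) \to w \in E$. Let $p$ be the basepoint of $w$; by continuity of the bundle projection, $p_n \to p$. By compactness of $M$ we may pass to a subsequence so that $q_n \to q$, and then the bound $d(p_n,q_n)\leq \epsilon$ gives $(p,q) \in \Delta(\epsilon)$; moreover $|w| = \lim_n e^{d(p_n,q_n)} = e^{d(p,q)}$.

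It remains to produce a ray $R$ in $T_pM$ with $(p,q,R) \in \fB\Delta(\epsilon)$ and $\Bl_\epsilon(p,q,R) = w$. If $p \neq q$, then $0 < d(p,q) \leq \epsilon < 1/2 = \inj(M)$, so for all sufficiently large $n$ the vector $\exp_{p_n}^{-1}(q_n)$ is well-defined, nonzero, and varies continuously with $(p_n,q_n)$ as a section of $TM$; hence the unit directions $v_{R_n}$ converge to a unit vector $v_R$ through which a unique ray $R$ passes, and $R$ passes through $\exp_p^{-1}(q)$ by construction, so $\Bl_\epsilon(p,q,R) = e^{d(p,q)}v_R = w$. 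If instead $p = q$, then $|w| = 1$, and I would simply take $R$ to be the ray through $w$ itself; the definition of $\fB\Delta(\epsilon)$ admits arbitrary rays when $p = q$, so $(p,p,R) \in \fB\Delta(\epsilon)$ and again $\Bl_\epsilon(p,p,R) = w$.

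The main obstacle I anticipate is the behavior of $\exp_p^{-1}$ when $p$ lies in or near $\partial M$, since there $\exp_p$ is defined only on $B_{1/2}(0) \cap H_p$ and one must check that the inverse map fits continuously across the boundary stratum. Because $\epsilon < \inj(M)$ and the defining half-plane $H_p$ varies continuously with $p$ (being determined by the nearest-boundary geometry under the flat metric on $\widetilde{M}$), the restricted exponential maps assemble into a continuous local diffeomorphism of the relevant neighborhood of the diagonal in $M \times M$ onto its image in $TM$; this is what makes the unit-direction limit in the previous paragraph legitimate. The degenerate case $p = q$ creates no issue precisely because arbitrary rays are admitted at the diagonal in the definition of $\fB\Delta(\epsilon)$.
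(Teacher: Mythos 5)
Your proof is correct and fills in exactly the argument the paper leaves implicit (the paper merely says ``observe'' after defining $\Bl_\epsilon$): the image of $\Bl_\epsilon$ lies in the compact disk sub-bundle $\{v\in TM: |v|\le e^\epsilon\}$ and is closed there, with the two cases $p\ne q$ and $p=q$ handled precisely as you describe. The worry about $H_p$ is harmless but also unnecessary, since $\exp_p^{-1}$ is by construction the restriction of $(\exp_p^{\mathrm{ext}})^{-1}$, and the latter is jointly continuous in $(p,q)$ on the neighborhood $\{d(p,q)<1/2\}$ of the diagonal in $M_{\mathrm{ext}}\times M_{\mathrm{ext}}$ with no boundary subtleties at all.
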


On the other hand, $\Delta^+(\epsilon)$ can be naturally embedded in $\fB\Delta(\epsilon)$ in the following way.
For each $(p,q)\in \Delta^+(\epsilon)$, there is a unique ray $R_{pq}$ in $T_pM$ such that $R_{pq}$ starts at $0$ and  passes through $\exp_p^{-1}(q)$.
Therefore, $\Delta^+(\epsilon)$ is naturally embedded in $\fB\Delta(\epsilon)$ by $(p,q) \mapsto (p,q, R_{pq})$. Say that the embedding is $\iota_\epsilon:\Delta^+(\epsilon)\to \fB\Delta(\epsilon)$.
If there is no confusion, then we do not strictly distinguish the image of $\iota_\epsilon$ with $\Delta^+(\epsilon)$.

Recall that if $(p,q,R)\in \fB\Delta(\epsilon) \setminus \Delta^+(\epsilon)$, then $p=q$ and $R$ can be any ray in $T_pM$ starting at $0$. Hence, the following proposition follows.
\begin{prop}
$\Bl_\epsilon(\fB\Delta(\epsilon)\setminus\Delta^+(\epsilon))$ is the unit tangent bundle $T^1M$ of $M$.
\end{prop}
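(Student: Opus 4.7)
The plan is a direct verification of the two set-theoretic inclusions $\Bl_\epsilon(\fB\Delta(\epsilon)\setminus\Delta^+(\epsilon))\subseteq T^1M$ and $T^1M\subseteq \Bl_\epsilon(\fB\Delta(\epsilon)\setminus\Delta^+(\epsilon))$. No serious estimate is needed; this proposition is essentially a definition-chase that records what the blow-up has done to the diagonal in preparation for the compactness and continuity arguments in the sections that follow.

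For the forward inclusion, I would take an arbitrary triple $(p,q,R)\in\fB\Delta(\epsilon)\setminus\Delta^+(\epsilon)$. Since $\Delta^+(\epsilon)=\Delta(\epsilon)\setminus\Delta$, the pair $(p,q)$ must lie on the diagonal, so $p=q$ and $d(p,q)=0$. Substituting into the embedding formula $\Bl_\epsilon(p,q,R)=e^{d(p,q)}v_R$ yields $\Bl_\epsilon(p,q,R)=v_R$, which is by definition a unit vector in $T_pM$ and therefore sits in $T^1M$. For the reverse inclusion, given $v\in T^1_pM$ with $p\in M$, let $R$ be the ray in $T_pM$ starting at $0$ and passing through $v$. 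Then $(p,p)\in\Delta\subseteq\Delta(\epsilon)$, and $\exp_p^{-1}(p)=0$ lies on every ray emanating from the origin, so the triple $(p,p,R)$ satisfies all defining conditions of $\fB\Delta(\epsilon)$; because $(p,p)\in\Delta$, the triple lies outside $\Delta^+(\epsilon)$. Finally $\Bl_\epsilon(p,p,R)=e^{0}v_R=v_R=v$, exhibiting $v$ in the image.

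The only mildly delicate point I foresee is the behavior at boundary points $p\in\partial M$ and, more generally, at points in the open $1/2$-neighborhood of $\partial M$, where $\exp_p$ is defined only on the half-disk $B_{1/2}(0)\cap H_p$. The convention stated immediately before the proposition — that when $(p,q,R)\in\fB\Delta(\epsilon)\setminus\Delta^+(\epsilon)$ the ray $R$ may be \emph{any} ray in $T_pM$ starting at $0$ — is precisely what removes the half-plane restriction and lets one realize every unit vector in $T_pM$, including those that would ``point out of'' $H_p$. This is the reason the image is the \emph{full} unit tangent bundle of $M$, with no boundary restriction, and I do not expect any obstacle beyond flagging this convention.
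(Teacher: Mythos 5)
Your proof is correct and follows essentially the same reasoning as the paper, which gives no separate argument beyond the one-line observation you unpack (that points of $\fB\Delta(\epsilon)\setminus\Delta^+(\epsilon)$ have $p=q$, hence $d(p,q)=0$ and $\Bl_\epsilon$ reduces to $v_R$, while $R$ is unconstrained). Your flag about the boundary half-plane convention is a sensible clarification consistent with the paper's intent; there is no gap.
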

Since every element of $\fB\Delta(\epsilon)$ can be approximated by elements of $\Delta^+(\epsilon)$, we also have the following proposition.
\begin{prop}\label{Prop:denseInBlowUp}
    $\Delta^+(\epsilon)$ is a dense, open subset of $\fB\Delta(\epsilon)$.
\end{prop}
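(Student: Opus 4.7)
The plan is to pull everything back to the ambient tangent bundle $TM$ via the embedding $\Bl_\epsilon$, which sends $(p,q,R)$ to the vector $e^{d(p,q)}v_R \in T_pM$. The norm of this image equals $e^{d(p,q)}\in[1,e^\epsilon]$, and it equals $1$ exactly when $p=q$. Hence $\Bl_\epsilon(\Delta^+(\epsilon))=\Bl_\epsilon(\fB\Delta(\epsilon))\cap\{v\in TM:|v|>1\}$, whereas $\Bl_\epsilon(\fB\Delta(\epsilon)\setminus\Delta^+(\epsilon))$ sits in the unit sphere bundle $T^1M$, as recorded in the preceding proposition.

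Openness is then immediate: the norm $|\cdot|:TM\to[0,\infty)$ is continuous, so the superlevel set $\{v\in TM:|v|>1\}$ is open in $TM$, and intersecting with the subspace $\Bl_\epsilon(\fB\Delta(\epsilon))$ produces $\Bl_\epsilon(\iota_\epsilon(\Delta^+(\epsilon)))$. Since $\Bl_\epsilon$ is a homeomorphism onto its image, $\Delta^+(\epsilon)$ is open in $\fB\Delta(\epsilon)$.

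For density, I will show that every point $(p,p,R)\in \fB\Delta(\epsilon)\setminus\Delta^+(\epsilon)$, with associated unit vector $v_R\in T_pM$, is a limit of elements of $\iota_\epsilon(\Delta^+(\epsilon))$. Provided $v_R$ lies in the defining half-plane $H_p$ (or $p$ lies far enough from $\partial M$ that no such constraint is imposed), one sets $q_n:=\exp_p(t_n v_R)$ for a sequence $t_n\downarrow 0$; these are genuine points of $M$ satisfying $d(p,q_n)=t_n$ and $\exp_p^{-1}(q_n)=t_n v_R\in R$, so $\iota_\epsilon(p,q_n)=(p,q_n,R)$ and $\Bl_\epsilon\circ\iota_\epsilon(p,q_n)=e^{t_n}v_R\to v_R$ in $TM$. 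This gives the desired convergence $\iota_\epsilon(p,q_n)\to(p,p,R)$ in $\fB\Delta(\epsilon)$.

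The step that will require care, and the main obstacle, is the justification that the condition $v_R\in H_p$ (when $H_p$ is defined) holds for every $(p,p,R)$ actually belonging to $\fB\Delta(\epsilon)$ — equivalently, that $\Bl_\epsilon(\fB\Delta(\epsilon))$ contains no unit vector pointing out of $M$ at a boundary point. This should be read off directly from the definition of the blow-up, since any ray recorded at a diagonal point must arise as $\exp_p^{-1}(q)$ for some $q\in M$ near $p$, and such vectors lie in $H_p$ by the very definition of the exponential map on $M$. Once this observation is in hand, the explicit approximation above finishes the proof.
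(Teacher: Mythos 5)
Your argument for openness is correct: via the embedding $\Bl_\epsilon$, the set $\iota_\epsilon(\Delta^+(\epsilon))$ is precisely the preimage of the open set $\{v\in TM : |v|>1\}$, which is an open condition.

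The density argument, however, has a genuine gap, and the ``main obstacle'' you flag at the end is not something you can resolve in the direction you propose — it is actually false. The definition of $\fB\Delta(\epsilon)$ imposes no half-plane constraint on the ray $R$ when $p=q$: since $\exp_p^{-1}(q)=0$ in that case, \emph{every} ray in $T_pM$ through $0$ passes through $\exp_p^{-1}(q)$, and the paper explicitly records this in the sentence ``if $(p,q,R)\in \fB\Delta(\epsilon)\setminus\Delta^+(\epsilon)$, then $p=q$. In this case, $R$ can be any ray in $T_pM$ starting at $0$.'' The proposition immediately preceding the one under discussion then identifies $\Bl_\epsilon(\fB\Delta(\epsilon)\setminus\Delta^+(\epsilon))$ with the \emph{full} unit tangent bundle $T^1M$, including outward-pointing vectors at boundary points. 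So the points $(p,p,R)$ with $p\in\partial M$ and $v_R\notin H_p$ do belong to $\fB\Delta(\epsilon)$, and your approximation scheme $q_n=\exp_p(t_nv_R)$ — which keeps the first coordinate frozen at $p$ — cannot reach them, since those $q_n$ would lie outside $M$.

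The repair is to vary the basepoint as well: for $p\in\partial M$ and $v_R$ outward-pointing, choose interior points $p_n\to p$ and take $q_n=\exp_{p_n}(t_n v_n)$ where $v_n\in T_{p_n}M$ is the obvious parallel translate of $v_R$ and $t_n>0$ is chosen small enough that $t_nv_n\in H_{p_n}$ (this is possible because for an interior point $p_n$ near $\partial M$ the half-plane $H_{p_n}$ still contains a small ball around $0$, whose radius merely shrinks as $p_n\to p$). Then $(p_n,q_n)\in\Delta^+(\epsilon)$, the ray $R_n$ through $\exp_{p_n}^{-1}(q_n)=t_nv_n$ has unit vector converging to $v_R$ in $TM$, and $d(p_n,q_n)=t_n\to 0$, so $\Bl_\epsilon(\iota_\epsilon(p_n,q_n))\to v_R$, giving the needed density. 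For interior $p$ or for $v_R\in H_p$ your original construction already works, so this one extra case completes the proof.
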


Finally, we remark the following:
\begin{prop}\label{Prop:wellDefineLimit}
    For any $\epsilon_1,\epsilon_2$ with $0<\epsilon_1<\epsilon_2<\inj(M)$,  we have that $\fB\Delta(\epsilon_1)\subset \fB\Delta(\epsilon_2)$. Moreover, \[\bigcap_{0<\delta<\inj(M)}\fB\Delta(\delta)=\fB\Delta(\epsilon)\setminus\Delta^+(\epsilon)\]
    for any $\epsilon$ with $0<\epsilon < \inj(M)$.
\end{prop}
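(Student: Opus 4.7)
The plan is to unpack the definition of $\fB\Delta(\epsilon)$ and argue directly; both claims should reduce to the observation that membership in $\fB\Delta(\epsilon)$ only constrains $d(p,q)$ (and not the data $(p, \exp_p^{-1}(q), R)$ beyond what $\epsilon$ already forces).

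For the monotonicity statement, I would simply note that if $(p,q,R)\in \fB\Delta(\epsilon_1)$, then $(p,q)\in \Delta(\epsilon_1)$, i.e. $d(p,q)\le \epsilon_1 < \epsilon_2$, so $(p,q)\in\Delta(\epsilon_2)$. Since $R$ is still a ray in $T_pM$ starting at $0$ passing through $\exp_p^{-1}(q)$ (this condition makes no reference to $\epsilon$), the triple $(p,q,R)$ lies in $\fB\Delta(\epsilon_2)$. This gives $\fB\Delta(\epsilon_1)\subset \fB\Delta(\epsilon_2)$.

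For the intersection formula I would prove both inclusions. For the inclusion $\supseteq$, take $(p,q,R)\in \fB\Delta(\epsilon)\setminus \Delta^+(\epsilon)$; by the remark just above Proposition~5.4 we have $p=q$ and $R$ is an arbitrary ray in $T_pM$ starting at $0$. Then $d(p,q)=0\le\delta$ for every $\delta\in(0,\inj(M))$, and $\exp_p^{-1}(q)=0$ lies on every such ray, so $(p,q,R)\in\fB\Delta(\delta)$ for all $\delta$. Conversely, for $\subseteq$, if $(p,q,R)\in\bigcap_{0<\delta<\inj(M)}\fB\Delta(\delta)$ then $d(p,q)\le\delta$ for every $\delta>0$, hence $d(p,q)=0$ and so $p=q$, which places $(p,q,R)$ in $\fB\Delta(\epsilon)\setminus\Delta^+(\epsilon)$.

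There is no serious obstacle here; the statement is essentially a bookkeeping check on the definitions. The only subtle point worth double-checking is that when $p=q$ the condition that $R$ passes through $\exp_p^{-1}(q)$ degenerates to the tautology ``$R$ passes through $0$'', which is consistent with the definition of $\fB\Delta(\epsilon)\setminus\Delta^+(\epsilon)$ as the unit tangent bundle fibre over the diagonal. Once this is noted, both parts of the proposition follow at once.
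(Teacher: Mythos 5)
The paper states Proposition~\ref{Prop:wellDefineLimit} without proof, simply as a remark following the definitions of $\fB\Delta(\epsilon)$ and the embedding $\iota_\epsilon$. Your proof is a correct and complete unpacking of those definitions: the monotonicity follows because membership in $\fB\Delta(\epsilon)$ constrains only $d(p,q)\le\epsilon$ (the ray condition is $\epsilon$-independent), and both inclusions in the intersection formula reduce to the observation that $d(p,q)\le\delta$ for all $\delta>0$ forces $p=q$, at which point the ray condition degenerates to the vacuous requirement that $R$ pass through $0\in T_pM$. This is evidently the argument the authors had in mind in labeling the statement a remark, and your treatment of the $p=q$ degeneracy is the only point that needed spelling out.
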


\subsection{Compactification of $X_2(M)$}

Choose $\epsilon$ with $0<\epsilon<1/2$.
We define  the \emph{compactification} $\closure{X}_2(M)$ of $X_2(M)$ as the attaching space $\fB\Delta(\epsilon)\bigcup_{\iota_\epsilon}{X}_2(M)$ by the attaching map $\iota_\epsilon$. 
In other words, we attach $x\in \Delta^+(\epsilon)\subset \fB\Delta(\epsilon)$ to $\iota_\epsilon(x)\in \iota_\epsilon(\Delta^+(\epsilon))\subset\closure{X}_2(M)$.
\begin{rmk}
    We think of $\fB\Delta(\epsilon)$ and ${X}_2(M)$ as subspaces of $\closure{X}_2(M)$.
\end{rmk}
By \refprop{wellDefineLimit}, $\closure{X}_2(M)$ does not depend on $\epsilon$.
Moreover, the following proposition follows from \refprop{blowUpCompact} and \refprop{denseInBlowUp}.

\begin{prop}\label{Prop:compact}
    $\closure{X}_2(M)$ is compact and ${X}_2(M)$ is a dense open subset of $\closure{X}_2(M)$.
\end{prop}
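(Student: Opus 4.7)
The plan is to establish each of the three assertions---openness, density, and compactness---directly from the attaching-space construction of $\closure{X}_2(M)$, viewed as the quotient of $\fB\Delta(\epsilon)\sqcup X_2(M)$ by the identification $\iota_\epsilon(x)\sim x$ for $x\in \Delta^+(\epsilon)$. Let $q\colon \fB\Delta(\epsilon)\sqcup X_2(M)\to \closure{X}_2(M)$ denote the quotient map. Openness and density are short; the main work is compactness.

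For openness, the preimage $q^{-1}(X_2(M))$ equals $X_2(M)\sqcup \iota_\epsilon(\Delta^+(\epsilon))$, since only points of $\iota_\epsilon(\Delta^+(\epsilon))$ are glued. This is open in $\fB\Delta(\epsilon)\sqcup X_2(M)$ because $X_2(M)$ is trivially open in itself and $\iota_\epsilon(\Delta^+(\epsilon))$ is open in $\fB\Delta(\epsilon)$ by \refprop{denseInBlowUp}. For density, every point of $\closure{X}_2(M)\setminus X_2(M)$ corresponds to a point of $\fB\Delta(\epsilon)\setminus \iota_\epsilon(\Delta^+(\epsilon))$, namely a point of the unit tangent bundle $T^1M$; by \refprop{denseInBlowUp} each such point is a limit of points in $\iota_\epsilon(\Delta^+(\epsilon))$, whose images in $\closure{X}_2(M)$ lie in $X_2(M)$.

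For compactness, the strategy is to cover $\closure{X}_2(M)$ by two subsets that each descend to a compact subspace. Fix $0<\eta<\epsilon$ and set
\[
A_\eta=\{(p,q)\in M\times M : d(p,q)\geq \eta\},\qquad B=\fB\Delta(\epsilon).
\]
Then $A_\eta$ is a closed subset of the compact space $M\times M$, hence compact, and $B$ is compact by \refprop{blowUpCompact}. Since $q$ restricted to either summand of $\fB\Delta(\epsilon)\sqcup X_2(M)$ is continuous, $q(A_\eta)$ and $q(B)$ are compact subsets of $\closure{X}_2(M)$. Any point of $\closure{X}_2(M)$ lies in one of them: a point of $T^1M$ is in $B$, while a point $(p,q)\in X_2(M)$ is in $A_\eta$ when $d(p,q)\geq \eta$ and otherwise lies in $\Delta^+(\eta)\subset \Delta^+(\epsilon)\subset B$ after identification. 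Hence $\closure{X}_2(M)=q(A_\eta)\cup q(B)$ is a union of two compact sets, and therefore compact.

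The only subtle step is compactness, where one must carefully verify that $A_\eta$ and $B$ really do remain compact in the attaching-space topology and that they jointly exhaust $\closure{X}_2(M)$. Both reduce to bookkeeping with the quotient topology as above, so I do not anticipate a serious obstacle beyond being precise about the identification.
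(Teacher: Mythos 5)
Your proof is correct, and it fills in the details that the paper leaves implicit (the paper simply asserts that the proposition follows from Propositions \ref{Prop:blowUpCompact} and \ref{Prop:denseInBlowUp}). The three steps are each sound. For openness, the saturation $q^{-1}(q(X_2(M))) = X_2(M)\sqcup \iota_\epsilon(\Delta^+(\epsilon))$ is indeed open in the disjoint union because $\iota_\epsilon(\Delta^+(\epsilon))$ is open in $\fB\Delta(\epsilon)$ by \refprop{denseInBlowUp}. For density, your sequential argument works since $\fB\Delta(\epsilon)\subset TM$ is metrizable; one could alternatively argue that any nonempty open $V\subset\closure{X}_2(M)$ disjoint from $X_2(M)$ would have saturated preimage contained entirely in $\fB\Delta(\epsilon)$ and disjoint from the dense set $\iota_\epsilon(\Delta^+(\epsilon))$, forcing $V=\emptyset$. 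For compactness, the decomposition $\closure{X}_2(M)=q(A_\eta)\cup q(B)$ (with $\eta<\epsilon$) is exactly the right move: $A_\eta$ is closed in $M\times M$ hence compact and lies in $X_2(M)$, $B=\fB\Delta(\epsilon)$ is compact by \refprop{blowUpCompact}, restrictions of $q$ to each summand are continuous, and the two images cover everything since any $(p,q)\in X_2(M)$ with $d(p,q)<\eta$ lies in $\Delta^+(\epsilon)$ and hence is identified with a point of $B$. This is the natural argument and matches the ingredients the paper points to.
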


Now, we claim that the blowing-up of the diagonal does not change the topology of $X_2(M)$.
\begin{lem}\label{Lem:homEqu}
$X_2(M)$ and $\closure{X}_2(M)$ are homotopy equivalent.
\end{lem}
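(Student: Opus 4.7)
The plan is to show that the inclusion $i \colon X_2(M) \hookrightarrow \closure{X}_2(M)$ is a homotopy equivalence by producing an explicit retraction and a compatible deformation. Geometrically, $\closure{X}_2(M)$ is obtained from $X_2(M)$ by attaching the unit tangent bundle $T^1 M$ via an open collar coming from the blow-up structure, and such an open-collar attachment can be retracted into $X_2(M)$.

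First I would identify the collar explicitly. Define
\[
\Psi \colon T^1 M \times [0, \epsilon) \longrightarrow \closure{X}_2(M)
\]
by $\Psi((p,v), t) = (p, \exp_p(tv)) \in \Delta^+(\epsilon) \subset X_2(M)$ when $t > 0$, and $\Psi((p,v), 0) = (p, p, R_v) \in T^1 M \subset \fB\Delta(\epsilon)$, where $R_v$ denotes the ray in $T_pM$ through $v$. Passing to the embedding $\Bl_\epsilon$ of $\fB\Delta(\epsilon)$ into $TM$, the map $\Psi$ is identified with $((p,v), t) \mapsto e^t v$, which is a homeomorphism onto its image. Together with the attaching rule $\iota_\epsilon$ and \refprop{denseInBlowUp}, one checks that $\Psi$ is a homeomorphism onto an open neighborhood of $T^1 M$ in $\closure{X}_2(M)$, with $\Psi(T^1 M \times (0, \epsilon)) \subset X_2(M)$.

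Now fix $0 < \epsilon' < \epsilon$ and define a retraction $r \colon \closure{X}_2(M) \to X_2(M)$ by the identity outside the sub-collar $\Psi(T^1 M \times [0, \epsilon'))$ and by
\[
r(\Psi((p,v), t)) = \Psi((p,v), \epsilon') = (p, \exp_p(\epsilon' v))
\]
on this sub-collar, which is well-defined in $X_2(M)$ because $\epsilon' v \in H_p$. Define a homotopy $H \colon \closure{X}_2(M) \times [0,1] \to \closure{X}_2(M)$ by
\[
H(\Psi((p,v), t), s) = \Psi((p,v), \max\{t, s\epsilon'\})
\]
on the full collar $\Psi(T^1M \times [0, \epsilon))$, and $H(z, s) = z$ elsewhere. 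Then $H_0 = \id_{\closure{X}_2(M)}$, $H_1 = i \circ r$, and $H$ restricts to a self-homotopy of $X_2(M)$ (since $t > 0$ forces $\max\{t, s\epsilon'\} > 0$). Hence $H$ simultaneously witnesses $i \circ r \simeq \id_{\closure{X}_2(M)}$ and $r \circ i \simeq \id_{X_2(M)}$, so $i$ is a homotopy equivalence.

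The main obstacle is the verification that $\Psi$ is a continuous open embedding, because this is exactly where the blow-up topology on $\closure{X}_2(M)$ plays a role rather than a naive disjoint-union topology. I would handle it by pulling the relevant open sets back through $\Bl_\epsilon$ and $\iota_\epsilon$ and using local product charts in $TM$ adapted to $H_p$, reducing continuity and openness of $\Psi$ to the elementary fact that $(v, t) \mapsto e^t v$ is a homeomorphism from $T^1M \times [0, \epsilon)$ onto $\{w \in TM : 1 \le |w| < e^\epsilon\}$ (intersected with the appropriate half-planes near $\partial M$).
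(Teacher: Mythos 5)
Your argument is correct, but it takes a genuinely different route from the paper's. You show directly that the inclusion $X_2(M) \hookrightarrow \closure{X}_2(M)$ is a homotopy equivalence by exhibiting a half-open collar $\Psi\colon T^1M\times[0,\epsilon)\to\closure{X}_2(M)$ around the blow-up boundary and pushing off it with the $\max\{t,s\epsilon'\}$ homotopy. The paper instead introduces an auxiliary space $\fX$ obtained by attaching $T^{(0,e^\epsilon]}M$ to $\fC=\{(p,q):d(p,q)\geq\epsilon\}$ along $\delta(\epsilon)$ via $\Bl_\epsilon$, and observes that both $X_2(M)$ (sitting in $\fX$ as $\fC$ glued to the image of $\Delta^+(\epsilon)$ in $T^{(1,e^\epsilon]}M$) and $\closure{X}_2(M)$ (as $\fC$ glued to $\Bl_\epsilon(\fB\Delta(\epsilon))\subset T^{[1,e^\epsilon]}M$) are deformation retracts of $\fX$ by radial scaling in the fibers; the equivalence then follows from the zigzag $X_2(M)\simeq\fX\simeq\closure{X}_2(M)$. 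What your approach buys is directness and an explicit homotopy inverse to the inclusion; what the paper's zigzag buys is that it never has to verify a collar structure \emph{inside} $\closure{X}_2(M)$ itself---it only needs $\Bl_\epsilon$ to be an embedding, and the retractions then live in the standard radial coordinate on $TM$. The step you rightly flag as the main obstacle (that $\Psi$ is a homeomorphism onto an open neighborhood) is, after transporting through $\Bl_\epsilon$, precisely the elementary radial statement the paper exploits, so the two proofs are built on the same geometric picture. One point to make explicit in your writeup: near $\partial M$ the exponential map $\exp_p$ is only defined on $B_{1/2}(0)\cap H_p$, so the domain of $\Psi$ should be the unit vectors lying in the defining half-planes $H_p$ (which is what $\fB\Delta(\epsilon)\setminus\Delta^+(\epsilon)$ actually consists of, being the closure of $\iota_\epsilon(\Delta^+(\epsilon))$); otherwise $\Psi((p,v),t)$ is not defined for $t>0$.
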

\begin{proof}
    Observe that $X_2(M)\setminus \delta(\epsilon)$ has exactly two components. One of the components is $\Delta^+(\epsilon)\setminus \delta(\epsilon)$. Note that the closure of $\Delta^+(\epsilon)\setminus \delta(\epsilon)$ in $X_2(M)$ is $\Delta^+(\epsilon)$. We denote the closure of the other component by $\fC$.

    Now, we consider the embedding $\Bl_\epsilon:\fB\Delta(\epsilon)\to TM$.
    For a connected subset $I$ of $\RR_{\geq0}$,
    we denote by $T^I M$ the set of all vectors $v\in TM$ such that $|v|\in I$.
    In particular, if $I$ is $\{p\}$ for some $p\geq 0$, then we just write $T^pM$.
    
    Observe that $\Bl_\epsilon(\delta(\epsilon))$ is a subset of $T^{d}M$ where $d=e^\epsilon$. Also, $\Bl_\epsilon(\fB\Delta(\epsilon))$ is a subset of $T^{[1,d]}M$.
    Then, we construct $\fX$ by attaching $T^{(0,d]}M$ to $\fC$ along $\delta(\epsilon)$ with $\Bl_\epsilon$.
    We still think of $\Bl_\epsilon(\fB\Delta(\epsilon))$ as a subspace of $\fX$.
    The homotopy equivalency follows from the fact that $X_2(M)$ and $\closure{X}_2(M)$ are  deformation retracts of $\fX$.
\end{proof}

Recall that for each $h$ in $\Diff^1(M)$, $\bar{h}$ acts continuously on $M\times M$ and on $X_2(M)$. Now, we show that $\bar{h}$ can be extended to a homeomorphism on $\closure{X}_2(M)$. 
For each $(p,p,R)\in \fB\Delta(\epsilon)\setminus \Delta^+(M) \subset \closure{X}_2(M)$, we  define 
\[\bar{h}((p,p,R))=(h(p),h(p),dh_p(R)).\]
The continuity of the extension follows from the fact that for any $\delta$ with $0<\delta<\epsilon$, if a sequence $\{(x_n,y_n,L_n)\}_{n\in\NN}$ in $\fB\Delta(\delta)$ converges to $(x,x,L)$, then the sequence of the unit vectors $v_{L_n}$ of $L_n$ converges to  the unit vector $v_L$ in $L$ in the tangent bundle $TM$, and for each $(x,y,L)\in \fB\Delta(\delta)$, if $x\neq y$, then $L$ is uniquely determined.
\begin{prop}\label{Prop:extension} $\Diff^1(M)$ acts continuously on $\closure{X}_2(M)$. Namely, there is a continuous embedding from  $\Diff^1(M)$ to  $\Homeo(\closure{X}_2(M))$ defined by $h\mapsto \bar{h}$.
\end{prop}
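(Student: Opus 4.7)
Plan: The proposition breaks into three verifications: (a) each $\bar{h}$ is a homeomorphism of $\closure{X}_2(M)$, (b) $h \mapsto \bar{h}$ is continuous in the $C^1$-topology on $\Diff^1(M)$, and (c) the map is injective. Bijectivity of $\bar{h}$ is immediate with $\overline{h^{-1}}$ as a two-sided inverse: on $X_2(M) \subset \closure{X}_2(M)$ this is the restriction of the diagonal action $h \times h$, while on the complement $\fB\Delta(\epsilon)\setminus \Delta^+(\epsilon) \cong T^1 M$ the chain rule $(dh^{-1})_{h(p)} = (dh_p)^{-1}$ shows $\overline{h^{-1}} \circ \bar{h} = \id$ on triples $(p,p,R)$, since $dh_p$ is a linear isomorphism and hence carries rays bijectively to rays.

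The main work is to verify continuity of $\bar{h}$ on $\closure{X}_2(M)$. Continuity on the open piece $X_2(M)$ is inherited from the continuity of $h\times h$ on $M\times M$, and continuity on the open piece $T^1 M$ follows from continuity of the normalized differential $(p,v) \mapsto (h(p), dh_p(v)/\lvert dh_p(v)\rvert)$. The substantive obstacle is continuity at points of $T^1M$ approached from $X_2(M)$. Suppose $\{(p_n,q_n)\}\subset X_2(M)$ converges to $(p,p,R)\in T^1M$; unfolding the topology on $\fB\Delta(\epsilon)$ via the embedding $\Bl_\epsilon$, this says $p_n,q_n\to p$ in $M$ and the unit vectors $v_n := \exp_{p_n}^{-1}(q_n)/\lvert \exp_{p_n}^{-1}(q_n)\rvert \in T_{p_n}M$ converge to the unit vector $v_R\in T_pM$. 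Writing $q_n = \exp_{p_n}(r_n v_n)$ with $r_n \to 0$, the $C^1$-smoothness of $h$ yields the first-order expansion
\[
\exp_{h(p_n)}^{-1}(h(q_n)) = r_n\,dh_{p_n}(v_n) + o(r_n),
\]
so the corresponding unit vectors converge to $dh_p(v_R)/\lvert dh_p(v_R)\rvert$ in $TM$. This is precisely the statement that $\bar{h}((p_n,q_n)) \to (h(p),h(p),dh_p(R))$ in $\closure{X}_2(M)$, which is the required boundary continuity; this step is where the $C^1$-regularity is essential, and it is the main technical point of the proof.

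Finally, injectivity of $h \mapsto \bar{h}$ is immediate since $\bar{h}$ restricted to $X_2(M)$ already recovers $(h(p),h(q))$ for all $(p,q)\in X_2(M)$ and therefore determines $h$. For continuity of $h \mapsto \bar{h}$: if $h_n \to h$ in the $C^1$-topology, then $h_n \to h$ and $dh_n \to dh$ uniformly on compact sets, and the boundary-case analysis above, carried out uniformly in the diffeomorphism, together with the compactness of $\closure{X}_2(M)$ from \refprop{compact}, yields uniform convergence $\bar{h}_n \to \bar{h}$ on $\closure{X}_2(M)$, which is the required continuity of $h \mapsto \bar{h}$ in the compact-open topology on $\Homeo(\closure{X}_2(M))$.
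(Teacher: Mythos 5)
Your proof is correct and follows essentially the same approach as the paper: the key point in both is that convergence of a sequence $(p_n,q_n)\in X_2(M)$ to a boundary point $(p,p,R)\in T^1M$ is, via the embedding $\Bl_\epsilon$, exactly convergence of the normalized vectors $\exp_{p_n}^{-1}(q_n)/\lvert\exp_{p_n}^{-1}(q_n)\rvert$ to $v_R$ in $TM$, and $C^1$-regularity of $h$ is what transports this convergence through $\bar{h}$. The paper dispatches this in a single sentence of observation just before the proposition, whereas you spell out the first-order expansion $\exp_{h(p_n)}^{-1}(h(q_n)) = r_n\,dh_{p_n}(v_n) + o(r_n)$, the bijectivity via the chain rule, the injectivity of $h\mapsto\bar h$, and the continuity of $h\mapsto\bar h$, which the paper leaves entirely implicit; so your write-up is a fleshed-out version of the same argument rather than a different route. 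Two small inaccuracies of phrasing: the copy $T^1M\cong\fB\Delta(\epsilon)\setminus\Delta^+(\epsilon)$ is closed, not open, in $\closure{X}_2(M)$, so calling it an ``open piece'' is a slip (you then treat the genuinely delicate boundary approach correctly anyway); and the uniformity of the $o(r_n)$ term deserves a word — it follows from uniform continuity of $dh$ on the compact manifold $M$ — but this is a routine fill-in, not a gap.
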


\subsection{Boundedness of word lengths}

Recall the notions in \refsec{GGcocycle}.
The following lemma is a variation of \cite[Proposition~2]{GambaudoPecou99}. 
Note that $P_2(M)$ is finitely generated (e.g. see \cite{GoncalvesGuaschi17}).
\begin{restate}{Lemma}{Lem:finiteness}
    If $f \in \Diff_\omega(M,\partial M)_0$ and $S$ is a finite generating set of $\pi_1(X_2(M),\bar{z})$ where $\bar z \in X_2(\hat{M})$, then there is a constant $K(f, S)$ such that 
$$\ell_S(\gamma(f;z))\leq K(f, S)$$
for almost every $z$ in $\Omega^4$.
\end{restate}

\begin{proof}
    We consider the compactification $\closure{X}_2(M)$ of $X_2(M)$. By \refprop{compact}, $\closure{X}_2(M)$ is a compact and $X_2(M)$ is a dense open subset. 
    Moreover, by \reflem{homEqu}, $X_2(M)$ and $\closure{X}_2(M)$ are homotopy equivalent. Hence, we can think of $S$ as a finite generating set of $G=\pi_1(\closure{X}_2(M),\bar{z})$.

    We choose an isotopy $f_t$ from the identity to $f$ in $\Diff_\omega(M,\partial M)_0$. Then, by \refprop{extension}, there is a corresponding isotopy $\bar{f}_t$ from the identity to $\bar f$ in $\Homeo(\closure{X}_2(M))$.
    
    Now, we consider the continuous map $H:[0,1]\times \closure{X}_2(M) \to \closure{X}_2(M)$, given by 
    $H(t,x)=\bar{f}_t(x)$.
    Let $\widetilde{\fX}$ be the universal cover of $\closure{X}_2(M)$ and $q:\widetilde{\fX}\to \closure{X}_2(M)$ the covering map.
    Then, we take the lifting $\widetilde{H}$ of $H$ such that $\widetilde{H}:[0,1]\times \widetilde{\fX} \to \widetilde{\fX}$ is an isotopy from the identity to a lifting $\tilde{f}$ of $f$, that is, $\widetilde{H}(0,x)=x$, $\widetilde{H}(1,x)=\tilde{f}$, and
    $q(\widetilde{H}(t,x))=H(t,q(x))$.

    Recall that $\Omega^4$ is an open, dense subset of $X_2(M)$ and it is also contractible by the definition. By \refprop{compact}, $\Omega^4$ is also an open, dense and contractible subset of $\closure{X}_2(M)$.
    Fix a point $\tilde{z}\in \widetilde{\fX}$ such that 
    $q(\tilde{z})=\bar{z}$.
    We denote by $\fW$ the component of $q^{-1}(\Omega^4)$ containing $\tilde{z}$. 
    
    By the construction of $\gamma(f;\cdot)$, it is enough to show that 
    \[
        A = \{ g \in G \mid g(\fW) \cap \tilde{f}(\closure{\fW}) \neq \emptyset \}
    \]
    is finite.
    For contradiction, we assume that the $A$ is infinite.
    We choose $x_g \in g(\fW) \cap \tilde{f}(\closure{\fW})$ for every $g \in A$.
    By the compactness and metrizability of $\tilde{f}(\closure{\fW})$, there exists an accumulation point $\tilde{x} \in \tilde{f}(\closure{\fW})$ of $\{x_g \mid g \in A \}$.
    Set $x = q(\tilde{x}) \in \closure{X}_2(M)$.

    Since $q \colon \widetilde{\fX} \to \closure{X}_2(M)$ is a covering map, we take an open neighborhood $B \subset \closure{X}_2(M)$ of $x$ such that $q^{-1}(B)$ is the disjoint union of $\{ g(\widetilde{B}) \mid g \in G \}$, where $\widetilde{B} \subset \widetilde{\fX}$ is a homeomorphic lift of $B$ containing $\tilde{x}$.
    We set $S = \{ g \in G \mid x_g \in \widetilde{B} \}$, which is an infinite set.
    Note that $\{ g^{-1}(x_g) \mid g \in S\}$ is a closed subset of $\closure{\fW}$ since it has no accumulation point.

    We set $O = \closure{\fW} \setminus \{ g^{-1}(x_g) \mid g \in S\}$ and $O_g = g^{-1}(\widetilde{B})$ for every $g \in S$.
    Then $\{ O \} \cup \{ O_g \mid g \in S\}$ provides an open cover of $\closure{\fW}$ but does not admit a finite subcover, which is a contradiction.
\end{proof}

\section*{Acknowledgements}
We would like to thank Takashi Tsuboi, Sangjin Lee, Hongtaek Jung, Mitsuaki Kimura and Erika Kuno for helpful conversations and comments.
The first author was supported by the National Research  Foundation of Korea Grant funded by the Korean Government (RS-2022-NR072395).
The second author is partially supported by JSPS KAKENHI Grant Number JP23KJ1938 and JP23K12971.

\bibliographystyle{alpha}
\bibliography{biblio}

\end{document}